\newtheorem{theorem}{Theorem}[section]
\newtheorem{prop}[theorem]{Proposition}
\newtheorem{lemma}[theorem]{Lemma}
\newtheorem{coro}[theorem]{Corollary}
\newtheorem{prop-def}{Proposition-Definition}[section]
\theoremstyle{definition}
\newtheorem{defn}[theorem]{Definition}
\newtheorem{remark}[theorem]{Remark}
\newtheorem{exam}[theorem]{Example}
\newcommand{\nc}{\newcommand}
\nc{\delete}[1]{{}}
\nc{\mmargin}[1]{}
\nc{\mlabel}[1]{\label{#1}}  
\nc{\mref}[1]{\ref{#1}}  
\nc{\mbibitem}[1]{\bibitem{#1}} 
	\nc{\mlabel}[1]{\label{#1}  
		{\hfill \hspace{1cm}{\bf{{\ }\hfill(#1)}}}}
	\nc{\cite}[1]{\cite{#1}{{\bf{{\ }(#1)}}}}  
	\nc{\mref}[1]{\ref{#1}{{\bf{{\ }(#1)}}}}  
	\nc{\mbibitem}[1]{\bibitem[\bf #1]{#1}} 
 \font\cyrs=wncyr7
\newcommand{\bk}{{\mathbf{k}}}
\nc{\vep}{\varepsilon}
\nc{\bin}[2]{ (_{\stackrel{\scs{#1}}{\scs{#2}}})}  
\nc{\binc}[2]{(\!\! \begin{array}{c} \scs{#1}\\
		\scs{#2} \end{array}\!\!)}  
\nc{\bincc}[2]{  ( {\scs{#1} \atop
		\vspace{-1cm}\scs{#2}} )}  
\nc{\oline}[1]{\overline{#1}}
\nc{\mapm}[1]{\lfloor\!|{#1}|\!\rfloor}
\nc{\bs}{\bar{S}}
\nc{\la}{\longrightarrow}
\nc{\ot}{\otimes}
\nc{\rar}{\rightarrow}
\nc{\lon }{\,\rightarrow\,}
\nc{\dar}{\downarrow}
\nc{\dap}[1]{\downarrow \rlap{$\scriptstyle{#1}$}}
\nc{\defeq}{\stackrel{\rm def}{=}}
\nc{\dis}[1]{\displaystyle{#1}}
\nc{\dotcup}{\ \displaystyle{\bigcup^\bullet}\ }
\nc{\Gr}{\mathrm{Gr}}
\nc{\hcm}{\ \hat{,}\ }
\nc{\hts}{\hat{\otimes}}
\nc{\hcirc}{\hat{\circ}}
\nc{\lleft}{[}
\nc{\lright}{]}
\nc{\curlyl}{\left \{ \begin{array}{c} {} \\ {} \end{array}
	\right .  \!\!\!\!\!\!\!}
\nc{\curlyr}{ \!\!\!\!\!\!\!
	\left . \begin{array}{c} {} \\ {} \end{array}
	\right \} }
\nc{\longmid}{\left | \begin{array}{c} {} \\ {} \end{array}
	\right . \!\!\!\!\!\!\!}
\nc{\ora}[1]{\stackrel{#1}{\rar}}
\nc{\ola}[1]{\stackrel{#1}{\la}}
\nc{\scs}[1]{\scriptstyle{#1}} \nc{\mrm}[1]{{\rm #1}}
\nc{\dirlim}{\displaystyle{\lim_{\longrightarrow}}\,}
\nc{\invlim}{\displaystyle{\lim_{\longleftarrow}}\,}
\nc{\dislim}[1]{\displaystyle{\lim_{#1}}} \nc{\colim}{\mrm{colim}}
\nc{\mvp}{\vspace{0.3cm}} \nc{\tk}{^{(k)}} \nc{\tp}{^\prime}
\nc{\ttp}{^{\prime\prime}} \nc{\svp}{\vspace{2cm}}
\nc{\vp}{\vspace{8cm}}
\nc{\modg}[1]{\!<\!\!{#1}\!\!>}
\nc{\intg}[1]{F_C(#1)}
\nc{\lmodg}{\!<\!\!}
\nc{\rmodg}{\!\!>\!}
\nc{\cpi}{\widehat{\Pi}}
\nc{\ssha}{{\mbox{\cyrs X}}} 
\nc{\tsha}{{\mbox{\cyrt X}}}
\nc{\shpr}{\diamond}    
\nc{\labs}{\mid\!}
\nc{\rabs}{\!\mid}
\nc{\C}{{\mathrm{C}}}
 \nc{\dd}{{\mathrm{d}}}
\nc{\ad}{\mrm{ad}}
\nc{\ann}{\mrm{ann}}
\nc{\Aut}{\mrm{Aut}}
\nc{\DA}{{\mathsf{DL}_\lambda}}
\nc{\Alg}{{\mathrm{Lie}}}
\nc{\DO}{{\mathsf{DO}_\lambda}}
\nc{\bim}{\mbox{-}\mathsf{Rep}}
\nc{\md}{\mbox{-}\mathsf{rep}}
\nc{\br}{\mrm{bre}}
\nc{\can}{\mrm{can}}
\nc{\rchar}{\mrm{char}}
\nc{\cok}{\mrm{coker}}
\nc{\de}{\mrm{dep}}
\nc{\dtf}{{R-{\rm tf}}}
\nc{\dtor}{{R-{\rm tor}}}
\nc{\Div}{{\mrm Div}}
\nc{\Diff}{\mrm{DL}}
\nc{\Diffl}{\mathsf{DL}_\lambda}
\nc{\diffo}{{\mathsf{DO}_\lambda}}
\nc{\Dif}{{\mathfrak{Dif}^\lambda}}
\nc{\Difinfty}{{\mathfrak{Dif}^\lambda_\infty}}
\nc{\alg}{\mathsf{Lie}}
\nc{\End}{\mrm{End}}
\nc{\Ext}{\mrm{Ext}}
\nc{\Fil}{\mrm{Fil}}
\nc{\Fr}{\mrm{Fr}}
\nc{\Frob}{\mrm{Frob}}
\nc{\Gal}{\mrm{Gal}}
\nc{\GL}{\mrm{GL}}
\nc{\Hom}{\mrm{Hom}}
\nc{\Hoch}{\mrm{Hoch}}
\nc{\hsr}{\mrm{H}}
\nc{\hpol}{\mrm{HP}}
\nc{\id}{\mrm{id}}
\nc{\im}{\mrm{im}}
\nc{\Id}{\mrm{Id}}
\nc{\ID}{\mrm{ID}}
\nc{\Irr}{\mrm{Irr}}
\nc{\incl}{\mrm{incl}}
\nc{\Ker}{\mrm{Ker}}
\nc{\length}{\mrm{length}}
\nc{\NLSW}{\mrm{NLSW}}
\nc{\Lie}{\mrm{Lie}}
\nc{\mchar}{\rm char}
\nc{\NR}{{\rm NR}}
\nc{\mpart}{\mrm{part}}
\nc{\ql}{{\QQ_\ell}}
\nc{\qp}{{\QQ_p}}
\nc{\rank}{\mrm{rank}}
\nc{\rcot}{\mrm{cot}}
\nc{\rdef}{\mrm{def}}
\nc{\rdiv}{{\rm div}}
\nc{\rtf}{{\rm tf}}
\nc{\rtor}{{\rm tor}}
\nc{\res}{\mrm{res}}
\nc{\Sh}{{\mathrm{Sh}}}
\nc{\SL}{\mrm{SL}}
\nc{\Spec}{\mrm{Spec}}
\nc{\sgn}{{\mathrm{sgn}}}
\nc{\tor}{\mrm{tor}}
\nc{\Tr}{\mrm{Tr}}
\nc{\tr}{\mrm{tr}}
\nc{\wt}{\mrm{wt}}
\nc{\op}{\mrm{op}}
\nc{\rmH}{ {\mathrm{H}}}
\nc{\rmC}{ {\mathrm{C}}}
\nc{\bfk}{{\bf k}}
\nc{\bfone}{{\bf 1}}
\nc{\bfzero}{{\bf 0}}
\nc{\detail}{\marginpar{\bf More detail}
	\noindent{\bf Need more detail!}
	\svp}
\nc{\gap}{\marginpar{\bf Incomplete}\noindent{\bf Incomplete!!}
	\svp}
\nc{\FMod}{\mathbf{FMod}}
\nc{\Int}{\mathbf{Int}}
\nc{\Mon}{\mathbf{Mon}}
\nc{\remarks}{\noindent{\bf Remarks: }}
\nc{\Rep}{\mathbf{Rep}}
\nc{\Rings}{\mathbf{Rings}}
\nc{\Sets}{\mathbf{Sets}}
\nc{\ob}{\mathsf{Ob}}
\nc{\BA}{{\mathbb A}}   \nc{\CC}{{\mathbb C}}
\nc{\DD}{{\mathbb D}}   \nc{\EE}{{\mathbb E}}
\nc{\FF}{{\mathbb F}}   \nc{\GG}{{\mathbb G}}
\nc{\HH}{{\mathbb H}}   \nc{\LL}{{\mathbb L}}
\nc{\NN}{{\mathbb N}}   \nc{\PP}{{\mathbb P}}
\nc{\QQ}{{\mathbb Q}}   \nc{\RR}{{\mathbb R}}
\nc{\TT}{{\mathbb T}}   \nc{\VV}{{\mathbb V}}
\nc{\ZZ}{{\mathbb Z}}   \nc{\TP}{\widetilde{P}}
\nc{\m}{{\mathbbm m}}
\nc{\cala}{{\mathcal A}}    \nc{\calc}{{\mathcal C}}
\nc{\cald}{\mathcal{D}}     \nc{\cale}{{\mathcal E}}
\nc{\calf}{{\mathcal F}}    \nc{\calg}{{\mathcal G}}
\nc{\calh}{{\mathcal H}}    \nc{\cali}{{\mathcal I}}
\nc{\call}{{\mathcal L}}    \nc{\calm}{{\mathcal M}}
\nc{\caln}{{\mathcal N}}    \nc{\calo}{{\mathcal O}}
\nc{\calp}{{\mathcal P}}    \nc{\calr}{{\mathcal R}}
\nc{\cals}{{\mathcal S}}    \nc{\calt}{{\Omega}}
\nc{\calv}{{\mathcal V}}    \nc{\calw}{{\mathcal W}}
\nc{\calx}{{\mathcal X}}
\nc{\fraka}{{\mathfrak A}}
\nc{\frakb}{\mathfrak{b}}
\nc{\frakc}{{\frak C}_\mathrm{Lie}}
\nc{\frakg}{{\frak g}}
\nc{\frakL}{{\frak L}}
\nc{\frakl}{{\frak l}}
\nc{\fraks}{{\frak s}}
\nc{\frakB}{{\frak B}}
\nc{\frakm}{{\frak M}}
\nc{\frakM}{{\frak M}}
\nc{\frakp}{{\frak p}}
\nc{\frakW}{{\frak W}}
\nc{\frakX}{{\frak X}}
\nc{\frakS}{{\frak S}}
\nc{\frakA}{{\frak A}}
\nc{\frakx}{{\frakx}}
\nc{\frakC}{{\frak{C}}}
\nc{\frakh}{{\frak h}}
\nc{\yj}[1]{\textcolor{green}{#1  }}
\begin{document}

\title [Differential Lie algebras]{Formal deformations, cohomology theory  and $L_\infty[1]$-structures for      differential Lie algebras  of arbitrary weight}
\author{Weiguo Lyu, Zihao Qi, Jian Yang and Guodong  Zhou}

\address{Weiguo Lyu\\School of Mathematical Sciences, Anhui University, Hefei 230601, China}
  \email{wglyu@ahu.edu.cn}

\address{Zihao Qi\\  School of Mathematical Sciences,
	Fudan University, Shanghai 200433, China}
  \email{qizihao@foxmail.com}

\address{Jian Yang and Guodong  Zhou\\
School of Mathematical Sciences,  Key Laboratory of Mathematics and Engineering Applications (Ministry of Education), Shanghai Key Laboratory of PMMP,  East China Normal University, Shanghai 200241, China}
   \email{y.j0@qq.com}
\email{gdzhou@math.ecnu.edu.cn}

\date{\today}

\begin{abstract} Generalising a previous work of Jiang and Sheng, a cohomology theory for  differential Lie algebras of arbitrary weight is introduced.  The  underlying $L_\infty[1]$-structure on the cochain complex is also determined via  a generalised version of higher derived brackets. The equivalence between   $L_\infty[1]$-structures  for absolute and relative     differential Lie algebras  are established.   Formal deformations and abelian extensions are interpreted  by using lower degree cohomology groups. Also we introduce the homotopy differential Lie algebras. In a forthcoming paper, we will show that the operad of homotopy  (relative) differential Lie algebras is the minimal model of the operad of (relative)  differential Lie algebras.

\end{abstract}

\subjclass[2010]{
16E40   
16S80   
12H05   
12H10   
16W25   
16S70}  

\keywords{abelian extension, cohomology,   deformation, derived bracket, differential Lie algebra,  $L_\infty[1]$-algebra}

\maketitle

\tableofcontents

\allowdisplaybreaks

\section*{Introduction}

This paper studies the cohomology theory, abelian extensions, formal deformations, and $L_\infty$-structures  for differential Lie algebras of arbitrary  weight.

\subsection{Deformation theory and minimal models}\

An important method to study  a mathematical object is to   investigate properties of this   object under small deformations.
Algebraic deformation theory originated from the   deformation theory of complex structures due to   Kodaira and Spencer and evolved from    ideas of Grothendieck, Illusie, Gerstenhaber, Nijenhuis, Richardson, Deligne, Schlessinger, Stasheff, Goldman, Millson, etc.
A fundamental idea of deformation theory is that  the deformation theory
of any given mathematical object can be governed  by a certain differential graded Lie algebra or more
generally an $L_\infty$-algebra associated to this mathematical object  (whose underlying complex is called the deformation complex).
This philosophy has been realised as  a theorem in characteristic zero by Lurie~\cite{Lur} and Pridham~\cite{Pri10}. It is an important problem   to determine  explicitly this differential graded Lie algebra or $L_\infty$-algebra governing deformation theory of this mathematical object.

Another important problem about  algebraic structures is to study their homotopy versions, just like $A_\infty$-algebras for  usual associative algebras.
Expected  result would be providing a minimal model of the operad governing this algebraic structure.
When this operad  is Koszul,  the  Koszul duality for operads \cite{GK94, MSS02, LV12} gives a perfect solution. More precisely,
 the cobar construction of the Koszul dual cooperad  of the operad in question  is the  minimal model of this operad. However, when the operad   is NOT Koszul, essential difficulties arise and   few examples of minimal models   have been  worked out.

These two problems, say, describing controlling $L_\infty$-algebras  and constructing minimal models or more generally cofibrant resolutions,  are closed related. In fact, given a cofibrant resolution, in particular,  a minimal model,  of the operad in question, one can deduce from the cofibrant resolution  the deformation complex as well as   its $L_\infty$-structure as explained by Kontsevich and Soibelman \cite{KS99}.

However, in practice, a minimal model or a small cofibrant resolution is not known a priori.  Wang and Zhou \cite{WZ21, WZ22} recently found a method to solve these two problems for a large class of non-Koszul operads.  The method is in fact the original method of Gerstenhaber \cite{Ger63, Ger64}. We start by studying   formal deformations of the algebraic structure in question and construct the deformation complex as well as $L_\infty$-structure on it from deformation equations. Then we consider the homotopy version which corresponds to Maurer-Cartan elements in the $L_\infty$-structure when appropriate spaces are graded. At last we show the differential graded operad governing the homotopy version is the minimal model of the operad of this algebraic structure, and we also found the Koszul dual which,  in this case, is usually a homotopy cooperad.

We aim to deal with differential Lie algebras of arbitrary weight in this paper and its sequel by using the method of \cite{WZ21, WZ22} as well as derived bracket technique.

\subsection{Differential Lie algebras, old and new}\

As is well known,  differential operators appeared in  Calculus several centuries ago.  The study of differential algebras themselves, as the algebraic study of differential equations,  only  began in the 1930s at the hands of  Ritt \cite{Rit34,Rit50}.   From then on, by  the work of many mathematicians in the following decades, the subject has been fully developed into a vast area in mathematics.

 By definition a \textbf{differential algebra} is an associative commutative algebra $A$ endowed with a linear operator $\dd$  subject to the Leibniz rule:
 $$\dd(xy)=\dd(x)y+x\dd(y), x, y\in A.$$
 The discretisation of differential operators  are
  \textbf{difference operators}.  The action of   the difference operator $\Delta$ on a real function $f$  defined by:
$$(\Delta f)(x)=f(x+\Delta x)-f(x),$$
which satisfies
  $$\Delta(fg)=\Delta(f)g+f\Delta(g)+\Delta(f)\Delta(g).$$

In order to unify the study of differential operator and difference operator, Guo  and Keigher introduced weighted differential operators \cite{GK08}.
\begin{defn} (\cite{GK08})
	Let $\lambda\in \bk$ be a  fixed element. A  {\bf differential associative   algebra of weight $\lambda$}   is an associative  algebra $A$ together with a  linear operator $\dd  : A\rar A$ such that
	$$\dd(xy)=\dd(x)y+x\dd(y)+\lambda \dd(x)\dd(y), x, y\in A.$$
\end{defn}
So an usual differential operator is a differential operator of weight $0$ and a difference operator is a differential operator of weight $1$.

One of the motivations of   Guo and his collaborators to introduce the concept of weighted differential operators is to study   Rota's  program. Rota \cite{Rota}  proposed to classify  ``interesting" operators on an algebra. In order to promote this program,    Guo et al.  introduced the Gr\"{o}bner-Shirshov basis  theory for    algebras endowed with linear operators \cite{Guo09a, BCQ10, GSZ13, GaoGuo17}, and focused on two types of operator algebras: Rota-Baxter type algebras \cite {ZGGS21} and differential type algebras \cite{GSZ13}.  Weighted differential algebras fall into the second type.

Inspired by the above research, Guo and  Keigher \cite{GK08} introduced  differential Lie algebras of arbitrary weight.
\begin{defn} (\cite{GK08})
	Let $\lambda\in \bk$ be a  fixed element. A  {\bf differential Lie algebra of weight $\lambda$}   is a Lie  algebra $(\frakg,[~,~])$ together with a  linear operator $\dd_\frakg : \frakg\rar \frakg$ such that
	\begin{equation*}
		\dd_\frakg([x, y])=[\dd_\frakg(x), y]+[x,  \dd_\frakg(y)]+ \lambda[\dd_\frakg(x), \dd_\frakg(y)], \quad\forall x,y\in \frakg.
	\end{equation*}
\end{defn}

A related notion is that of relative difference Lie algebras.
\begin{defn}[{\cite{CC22, JS23}}]
A \textbf{LieAct triple} is the triple $(\frakg,\frakh,\rho)$, where $(\frakg,[~,~]_\frakg)$ and $(\frakh,[~,~]_\frakh)$ are Lie algebras and $\rho:\frakg\lon \mathrm{Der}(\frakh)$ is a homomorphism of Lie algebras, where $\mathrm{Der}(\frakh)$ is the space of derivations on $\frakh$.

Let  $(\frakg,\frakh,\rho)$ be a  LieAct triple. A linear map $D:\frakg\lon\frakh$ is called a \textbf{relative differential operator of weight} $\lambda$ if the following equality holds:
\begin{equation*}
	D([x, y]_\frakg)=\rho(x)(D(y))-\rho(y)(D(x))+\lambda [D(x), D(y)]_\frakh, \quad\forall x,y\in \frakg.
	\end{equation*}
\end{defn}

There has been some papers working  on   differential Lie algebras with nonzero weight before, but they mainly focus on relative differential Lie algebras of weight $1$, also called relative difference algebras. When studying non-Abelian extensions of Lie algebras, Lue~ \cite{Lue66}~introduced the concept of cross homomorphisms which are  just   relative difference operators  of weight $1$.
Caseiro~and~Costa~ \cite{CC22}~studied the framework for the existence of cross homomorphisms: LieAct triple $( \frakg,  \frakh,  \rho) $, and gave a differential graded Lie algebra where the~Maurer-Cartan~elements correspond one-to-one to Lie action triples.
Pei,  Sheng, Tang, and  Zhao~\cite{PSTZ21} defined the cohomology theory of cross homomorphisms on Lie algebras, and used the derived bracket technique to find the deformation of a differential graded Lie algebra where the~Maurer-Cartan~elements correspond one-to-one to the cross homomorphisms.
Jiang and  Sheng constructed an $L_\infty$-algebra for relative difference Lie algebra using the technique of derived brackets in \cite {JS23} and they also introduced cohomology theory for relative difference Lie algebras and absolute   difference Lie algebras.

This paper is different from papers of  Sheng and his collaborators, firstly generalising from  weight $1$ to arbitrary weight. Moreover, our ultimate goal  is to develop systematically the homotopy theory for  both  absolute and relative  differential Lie algebras of arbitrary weight  from an operadic viewpoint,  including  cohomology theory (or  deformation complexes),  $L_\infty[1]$-structures, homotopy versions,   minimal models,   and Koszul dual homotopy cooperads.
This task will be completed in two papers. This paper is the elementary part of this project, which contains only cohomology theory (or  deformation complexes),  $L_\infty[1]$-structures and homotopy versions.
In a forthcoming paper, we shall prove the operad of homotopy  absolute (resp. relative)      differential Lie algebras is the minimal model  of the operad of   absolute (resp. relative)     differential Lie algebras, thus justifying the homotopy version we found is the right one.

Apart from the cohomology theory and $L_\infty[1]$-structures, in  this paper   we introduce a generalised version of derived bracket technique which replace Lie subalgebras by injective maps of Lie algebras. Moreover, this generalised version  permits us to establish the equivalence between $L_\infty[1]$-structures for relative and absolute differential Lie algebras.  This equivalence is believed by experts and firstly presented explicitly in this paper for differential Lie algebras.  In the forthcoming paper, we will establish the equivalence between minimal models for relative and absolute differential Lie algebras by using a coloring and decoloring procedure for (coloured) operads.

\subsection{Layout of the paper}\


The paper is organized as follows.

The first section is of preliminary nature, which contains results which are more or less known. After fixing some notations in Subsection~\ref{Subsect: notations},  we recall basic notions and facts about  differential Lie  algebra and their representations in Subsection~\ref{Subsect:  representations}. In particular, a key descending property for differential representations Lemma~\ref{lem:bmd} is presented.
A differential Lie algebra is the combination of the underlying Lie algebra and the differential operator. In this light we build the cohomology theory of a differential Lie algebra by combining its components from the Lie algebra and from the differential operator.
In Subsection~\ref{subsec:cohomologydo}, we establish the cohomology theory for differential operators of arbitrary  weight, which is quite different from the one for the underlying algebra unless the weight is zero.
In Subsection~\ref{subsec:cohomologydasub}, we combine the Chevalley-Eilenberg cohomology for Lie algebras and the just established cohomology for differential operators of arbitrary  weight to define the cohomology of differential Lie algebras of arbitrary  weight, with the cochain maps again posing extra challenges when the weight is not zero.  The proofs of this section will be given in Subsection~\ref{Subsect: Cohomology of differential Lie algebras from  Linfinit structure}, once the $L_\infty[1]$-structure has been established in Section~\ref{sec:L_infty}.

As applications and further justification of our cohomology theory for differential Lie algebras, in Section \ref{sec:ext}, we apply the theory to study abelian extensions of differential Lie algebras of arbitrary  weight, and show that abelian extensions are classified by the second cohomology group of the differential Lie algebras.

Further, in Section \ref{sec:def}, we apply the above cohomology theory to study formal deformations of differential Lie algebras of arbitrary  weight. In particular, we show that if  the second cohomology group of a differential Lie algebra with coefficients in the regular representation is trivial, then this differential Lie algebra is rigid.

To deal with the weight case and get $L_\infty[1]$-structure in Section \ref{sec:L_infty}, after a reminder on 
 $L_\infty[1]$-structures in Subsection~\ref{Subsect: Linfinity algebras},   a generalised version of the derived bracket technique is introduced in Subsection~\ref{Subsect: derived bracket technique},  and  the weight $\lambda$  is  also incorporated into the statements,  which permits us to deal with (absolute) differential Lie algebras of   arbitrary weight as well as relative differential Lie algebras of   arbitrary weight.
With this generalised version at hand, we  build the $L_\infty[1]$-structure of    relative differential Lie algebras in Subsection~\ref{Subsect: Linifnity structure for relative  differential Lie algebras},
  generalising the result of Jiang and Sheng \cite{JS23} from weight $1$   to arbitrary weight.
  In Subsection~\ref{Subsect:  Linfinty for differential Lie algebras},  we  build the $L_\infty[1]$-structure of   (absolute) differential Lie algebras.  The comparison with the $L_\infty[1]$-structures   for relative  and absolute differential Lie algebras is provided in Subsection~\ref{Subsect: Relative vs absolute}.

At last in Section~\ref{Sect: Applications}, based on the  $L_\infty[1]$-structure of   (absolute) differential Lie algebras found in Section~\ref{sec:L_infty}, we verify that the cohomoloy theory of Section~\ref{sec:cohomologyda} is the right one in Subsection~\ref{Subsect: Cohomology of differential Lie algebras from  Linfinit structure}, by using a trivial extension construction in order to deal with coefficients.  In Subsection~\ref{Subsect: Homotopy  differential Lie algebras}, the structure of homotopy differential Lie algebra with weight is introduced as Maurer-Cartan elements in the    $L_\infty[1]$-structure of   (absolute) differential Lie algebras when the spaces involved are graded.

\bigskip
\section{Differential Lie  algebras of arbitrary weight and their cohomology theory} \label{sec:cohomologyda}

This section recalls some background on differential Lie algebras and  their representations and introduce their cohomology theory generalsing the theory introduced by Jiang and Sheng \cite{JS23} from weight $1$ to arbitrary weight.

\subsection{Notations}\ \label{Subsect: notations}

Throughout this paper, let $\bfk$ be a field of characteristic $0$.  Except specially stated,  vector spaces are  $\bfk$-vector spaces and  all    tensor products are taken over $\bfk$.  We use cohomological grading.
%
%
%
 We will employ Koszul sign rule to determine signs,  that is, when  exchanging the positions of two graded objects in an expression, we have  to multiply the expression by a power of $-1$ whose  exponent is  the product of their degrees.

The  suspension  of a graded space $V=\oplus_{n\in \ZZ} V^n$ is the graded space  $sV $ with $(sV)^n=V^{n+1}$ for any $n\in \ZZ$.
Write $sv\in (sV)^n$ for $v\in V^{n+1}$.

 Let $V=\oplus_{n\in \mathbb{Z}} V^n$ be a graded vector space. Recall that   the graded symmetric algebra $S(V)$ of $V$ is defined to be the quotient of the tensor algebra $T(V)$ by   the two-sided ideal $I$   generated by
$x\ot y -(-1)^{|x||y|}y\ot x$ for all homogeneous elements $x, y\in V$. For $x_1\ot\cdots\ot x_n\in V^{\ot n}\subseteq T(V)$, write $ x_1\odot x_2\odot\dots\odot x_n$ its image in $S(V)$.
For homogeneous elements $x_1,\dots,x_n \in V$ and $\sigma\in S_n$ which is  the symmetric group in $n$ variables, the Koszul sign $\varepsilon(\sigma):=\varepsilon(\sigma;  x_1,\dots, x_n)$ is defined by
$$ x_1\odot x_2\odot\dots\odot x_n=\varepsilon(\sigma)x_{\sigma(1)}\odot x_{\sigma(2)}\odot\dots\odot x_{\sigma(n)}\in S(V).$$
Denote by $S^n(V)$ the image of $V^{\otimes n}$ in $S(V)$.

Let $V=\oplus_{n\in \mathbb{Z}} V^n$ be a graded vector space. Recall that   the graded exterior  algebra $\bigwedge(V)$ of $V$ is defined to be the quotient of the tensor algebra $T(V)$ by   the two-sided ideal generated by
$x\ot y +(-1)^{|x||y|}y\ot x$ for all homogeneous elements $x, y\in V$. For $x_1\ot\cdots\ot x_n\in V^{\ot n}\subseteq T(V)$, write $ x_1\wedge x_2\wedge\cdots\wedge x_n$ its image in $\bigwedge(V)$.
For homogeneous elements $x_1,\dots,x_n \in V$ and $\sigma\in S_n$, the   sign $\chi(\sigma):=\chi(\sigma;  x_1,\dots, x_n)$ is defined by
$$  x_1\wedge x_2\wedge\cdots\wedge x_n=\chi(\sigma)x_{\sigma(1)}\wedge x_{\sigma(2)}\wedge\cdots\wedge x_{\sigma(n)}\in \bigwedge(V).$$
Denote by $\bigwedge^n(V)$ the image of $V^{\otimes n}$ in $\bigwedge(V)$.
Obviously $\chi(\sigma)=\varepsilon(\sigma)\sgn(\sigma)$, where $\sgn(\sigma)$ is the signature of $\sigma$.

Recall that for a graded vector space $V$,  $S^n(sV)$ is isomorphic to  $s^n\wedge^nV$ via the map
\begin{equation}\label{Eq: exterior vs symmetric}sv_1\odot sv_2\odot\dots\odot sv_n\mapsto (-1)^{(n-1)|v_1|+(n-2)|v_2|+\cdots+|v_{n-1}|} s^n( v_1\wedge v_2\wedge \cdots \wedge v_n),\end{equation}
for $v_1, \dots, v_n\in V$ homogeneous elements in $V$.

 Let  $n\geq 1$.
 For $0\leq i_1, \dots, i_r\leq n$ with $i_1+\cdots+i_r=n$,  $\Sh(i_1, i_2,\dots,i_r)$ is the   set of $(i_1,\dots, i_r)$-shuffles, i.e., those permutation $\sigma\in S_n$ such that
 		 $$\sigma(1)<\sigma(2)<\dots<\sigma(i_1),  \ \sigma(i_1+1)< \dots<\sigma(i_1+i_2),\ \dots,\
 		\sigma(i_1+\cdots+i_{r-1}+1)< \cdots<\sigma(n).$$
 Let $\mathrm{PSh}(i_1, i_2,\dots,i_r)$   be the subset of $\Sh(i_1, i_2,\dots,i_r)$ whose elements $(i_1,\dots, i_r)$-shuffles satisfy:$$\sigma(1)<\sigma(i_1+1)<\sigma(i_1+i_2+1)<\dots<\sigma(i_1+\cdot +i_{r-1}+1).$$

 \subsection{Differential Lie algebras with weight and their representations}\
 \label{Subsect:  representations}

 In this subsection, we recall basic notions about differential Lie algebras with weight and their representations.

\begin{defn} (\cite{GK08})
	Let $\lambda\in \bk$ be a  fixed element. A  {\bf differential Lie algebra of weight $\lambda$}   is a Lie  algebra $(\frakg,~[~,~])$ together with a  linear operator $\dd_\frakg : \frakg\rar \frakg$ such that
	\begin{equation}\label{Eq: diff}
		\dd_\frakg([x, y])=[\dd_\frakg(x), y]+[x,  \dd_\frakg(y)]+\lambda [\dd_\frakg(x), \dd_\frakg(y)], \quad\forall x,y\in \frakg.
	\end{equation}
	Such an operator $\dd_\frakg$ is called a {\bf differential operator of weight $\lambda$} or a  {\bf derivation of weight $\lambda$}.
	
	Given two differential Lie algebras $(\frakg, \dd_\frakg),\,(\frakh,\dd_\frakh)$ of the same  weight $\lambda$, a {\bf homomorphism of differential Lie algebras} from $(\frakg,\dd_\frakg)$ to $(\frakh,\dd_\frakh)$ is a Lie algebra homomorphism $\varphi:\frakg\lon \frakh$ such that $\varphi\circ \dd_\frakg=\dd_\frakh\circ\varphi$.
	We denote by $\Diffl$ the category of  differential Lie algebras of weight $\lambda$.
\end{defn}

A differential operator of weight $1$ is also called a crossed homomorphism and a differential Lie algebra of weight $1$ is also called a difference Lie algebra \cite{PSTZ21, JS23}.


Recall that a representation of a Lie algebra $\frakg$ is a pair $(V,\rho)$, where $V$ is a vector space and $\rho: \frakg\to \mathfrak{gl}(V),~ x\mapsto (v\mapsto \rho(x)v)$ is a homomorphism  of Lie algebras for all $x,y\in \frakg$ and $v\in V$.

\begin{defn}
	Let $(\frakg, \dd_\frakg)$ be a differential Lie algebra.
	\begin{itemize}
		\item[{\rm (i)}] A {\bf representation} over the differential Lie algebra $(\frakg, \dd_\frakg)$ or a \textbf{differential representation} is a triple $(V,\rho, \dd_V)$, where $\dd_V\in \mathfrak{gl}(V)$, and  $(V,\rho)$ is a representation over the Lie algebra $\frakg$, such that for all $x,y\in \frakg, v\in V,$
		the following equality holds:
		\begin{eqnarray*}
			\dd_V(\rho(x)v)&=&\rho(\dd_\frakg(x))v+\rho(x)\dd_V(v)+\lambda \rho(\dd_\frakg(x))\dd_V(v).
		\end{eqnarray*}
		
		\item[{\rm (ii)}] Given two representations $(U,\rho^U, \dd_U),\,(V,\rho^V, \dd_V)$ over  $(\frakg, \dd_\frakg)$, a  linear map $f:U\lon V$ is called a {\bf homomorphism} of representations, if$$f\circ\rho^U(x)=\rho^V(x)\circ f,\quad \forall x\in \frakg, \quad \mathrm{and}\quad f\circ \dd_U=\dd_V\circ f.$$
	\end{itemize}
\end{defn}


\begin{exam}For a differential Lie algebra $(\frakg, \dd_\frakg)$,    the usual adjoint representation
	$$\ad:\frakg\to \mathfrak{gl}(\frakg),\,x\mapsto(y\mapsto [x, y])$$
	over the Lie algebra $\frakg$ is also  a representation of this differential Lie algebra.
	It is also  called the {\bf adjoint representation} over the differential Lie algebra $(\frakg, \dd_\frakg)$, denoted by $\frakg_{\ad}$.
\end{exam}

\begin{exam}
	Let $\frakg$ be a  Lie algebra and  $V$ be a  representation of it. Then  $(\frakg, \mathrm{Id}_\frakg)$  is a   differential Lie algebra  of weight $-1$ and  the pair $(V, \mathrm{Id}_V)$ gives  a  differential representation.
\end{exam}

\begin{exam}
	Let $(\frakg, \dd_\frakg)$ be a differential Lie algebra together with   a differential representation   $(V, \dd_V)$. Then  for arbitrary  nonzero scalar $\kappa\in \bk$,  $(\frakg, \kappa  \dd_\frakg)$ is a differential Lie algebra  of weight $\frac{\lambda}{\kappa}$ and    the pair $(V, \kappa\  \dd_V)$  is a differential representation.
	
\end{exam}

It is straightforward to obtain the following result:
\begin{prop}\label{Prop: trivial extensions}
	Let $(V,\rho, \dd_V)$ be a representation of the  differential Lie algebra $(\frakg, \dd_\frakg)$. Then $(\frakg\oplus V, \dd_\frakg+ \dd_V)$ is a differential Lie algebra, where the Lie algebra structure on $\frakg\oplus V$ is given by
	$$
	\{x+u,y+v\}:=[x, y]+\rho(x)v-\rho(y)u,\quad \forall x,y\in \frakg,~u,v\in V.
	$$
This new differential Lie algebra is denoted by $\frakg\ltimes V$, called the \textbf{trivial extension} of $\frakg$ by $V$.
\end{prop}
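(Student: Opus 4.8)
The plan is to verify directly that the proposed structure on $\frakg \oplus V$ is a differential Lie algebra of weight $\lambda$, by checking two things separately: that the bracket $\{-,-\}$ makes $\frakg \oplus V$ into a Lie algebra (this is the classical semidirect product construction), and that the operator $\dd_\frakg + \dd_V$ satisfies the weight-$\lambda$ derivation identity~\eqref{Eq: diff} with respect to this bracket. Since the statement says the result is straightforward, I will indicate the computations rather than belabour them.

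First I would recall why $(\frakg \oplus V, \{-,-\})$ is a Lie algebra. Antisymmetry is immediate from antisymmetry of $[-,-]$ and the sign in $\rho(x)v - \rho(y)u$. For the Jacobi identity on elements $x+u, y+v, z+w$, the $\frakg$-component reduces to the Jacobi identity in $\frakg$, and the $V$-component reduces to the statement that $\rho$ is a Lie algebra homomorphism into $\mathfrak{gl}(V)$, i.e. $\rho([x,y]) = \rho(x)\rho(y) - \rho(y)\rho(x)$; collecting the six terms coming from the cyclic sum gives exactly this. This is the standard semidirect product $\frakg \ltimes V$ and needs no new input.

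Next I would check the weight-$\lambda$ differential identity. Set $\mathcal{D} := \dd_\frakg + \dd_V$, acting on $\frakg \oplus V$ by $\mathcal{D}(x+u) = \dd_\frakg(x) + \dd_V(u)$. I need
\[
\mathcal{D}(\{x+u, y+v\}) = \{\mathcal{D}(x+u), y+v\} + \{x+u, \mathcal{D}(y+v)\} + \lambda\{\mathcal{D}(x+u), \mathcal{D}(y+v)\}.
\]
Expanding the left side: $\mathcal{D}([x,y] + \rho(x)v - \rho(y)u) = \dd_\frakg([x,y]) + \dd_V(\rho(x)v) - \dd_V(\rho(y)u)$. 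Now substitute~\eqref{Eq: diff} for the $\frakg$-part and the differential representation axiom $\dd_V(\rho(x)v) = \rho(\dd_\frakg(x))v + \rho(x)\dd_V(v) + \lambda\rho(\dd_\frakg(x))\dd_V(v)$ (and its analogue with $x,v$ replaced by $y,u$) for the two $V$-parts. On the right side, expanding each bracket using the definition of $\{-,-\}$ and the formula for $\mathcal{D}$ produces: in the $\frakg$-component, $[\dd_\frakg(x), y] + [x, \dd_\frakg(y)] + \lambda[\dd_\frakg(x), \dd_\frakg(y)]$, matching~\eqref{Eq: diff}; in the $V$-component, the nine terms $\rho(\dd_\frakg(x))v - \rho(y)\dd_V(u) + \rho(x)\dd_V(v) - \rho(\dd_\frakg(y))u + \lambda\rho(\dd_\frakg(x))\dd_V(v) - \lambda\rho(\dd_\frakg(y))\dd_V(u)$, which matches the $V$-part of the expanded left side term by term. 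So both components agree and the identity holds.

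I do not anticipate a genuine obstacle here — the proof is a bookkeeping exercise. The only point requiring a little care is sign discipline in the $V$-component: the two occurrences of $\rho(-)u$ enter with a minus sign from the definition of $\{-,-\}$, so one must be careful that $\dd_V$ applied to $-\rho(y)u$ and the terms $\{x+u, \mathcal{D}(y+v)\}$ and $\{\mathcal{D}(x+u), y+v\}$ producing $-\rho(y)\dd_V(u)$, $-\rho(\dd_\frakg(y))u$, $-\lambda\rho(\dd_\frakg(y))\dd_V(u)$ line up correctly; once this is tracked, everything cancels as required. Hence $(\frakg \oplus V, \mathcal{D})$ is a differential Lie algebra of weight $\lambda$, which we denote $\frakg \ltimes V$.
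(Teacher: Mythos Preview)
Your proof is correct and is exactly the direct verification the paper has in mind; the paper itself omits the proof, stating only that the result is straightforward to obtain. There is nothing to add.
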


We need the following observation which is a weighted version of \cite[Proposition 3.1]{Lue66}:
\begin{lemma}\label{lem:bmd}
Let $(V,\rho, \dd_V)$ be a representation of the  differential Lie algebra $(\frakg, \dd_\frakg)$. There exists	a new representation $(V,\rho_\lambda,\dd_V)$  over $(\frakg, \dd_\frakg)$, where  $\rho_\lambda$ is given by$\colon$
	\[\rho_\lambda(x) v=\rho(x+\lambda \dd_\frakg(x))v,\quad \forall x \in \frakg, v\in V.\]
	Denote the new presentation  by $V_\lambda:=(V, \rho_\lambda, \dd_V)$.
\end{lemma}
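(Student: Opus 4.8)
The plan is to verify directly that the triple $(V,\rho_\lambda,\dd_V)$ satisfies the two axioms required of a differential representation: first that $(V,\rho_\lambda)$ is a representation of the underlying Lie algebra $\frakg$, and second that $\dd_V$ interacts with $\rho_\lambda$ via the weighted compatibility identity. Throughout I would repeatedly use the given facts that $(V,\rho)$ is a Lie algebra representation, that $\dd_V$ satisfies $\dd_V(\rho(x)v)=\rho(\dd_\frakg(x))v+\rho(x)\dd_V(v)+\lambda\rho(\dd_\frakg(x))\dd_V(v)$, and that $\dd_\frakg$ is a derivation of weight $\lambda$ on $\frakg$, i.e. $\dd_\frakg([x,y])=[\dd_\frakg(x),y]+[x,\dd_\frakg(y)]+\lambda[\dd_\frakg(x),\dd_\frakg(y)]$.

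For the first axiom I would compute $\rho_\lambda([x,y])$ and compare it with $\rho_\lambda(x)\rho_\lambda(y)-\rho_\lambda(y)\rho_\lambda(x)$. Writing $D=\dd_\frakg$ for brevity, the left side is $\rho([x,y]+\lambda D[x,y])$; expanding $D[x,y]$ by the weight-$\lambda$ derivation rule turns this into $\rho\big([x,y]+\lambda[Dx,y]+\lambda[x,Dy]+\lambda^2[Dx,Dy]\big)=\rho\big([x+\lambda Dx,\,y+\lambda Dy]\big)$, where the last equality is just bilinearity of the bracket. Since $\rho$ is a Lie algebra homomorphism this equals $[\rho(x+\lambda Dx),\rho(y+\lambda Dy)]=\rho_\lambda(x)\rho_\lambda(y)-\rho_\lambda(y)\rho_\lambda(x)$, as desired. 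The key observation here — and the small structural insight behind the whole lemma — is that the map $x\mapsto x+\lambda\dd_\frakg(x)$ is a Lie algebra homomorphism $\frakg\to\frakg$ precisely because $\dd_\frakg$ is a weight-$\lambda$ derivation; so $\rho_\lambda$ is the composite of $\rho$ with this homomorphism.

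For the second axiom I must check $\dd_V(\rho_\lambda(x)v)=\rho_\lambda(\dd_\frakg(x))v+\rho_\lambda(x)\dd_V(v)+\lambda\rho_\lambda(\dd_\frakg(x))\dd_V(v)$. Starting from the left, $\dd_V(\rho_\lambda(x)v)=\dd_V(\rho(x+\lambda Dx)v)$; applying the original compatibility identity with $x$ replaced by $x+\lambda Dx$ gives $\rho(D(x+\lambda Dx))v+\rho(x+\lambda Dx)\dd_V(v)+\lambda\rho(D(x+\lambda Dx))\dd_V(v)$. Now $D(x+\lambda Dx)=Dx+\lambda D^2x = (Dx)+\lambda\dd_\frakg(Dx)$, so $\rho(D(x+\lambda Dx))=\rho_\lambda(\dd_\frakg(x))$ and $\rho(x+\lambda Dx)=\rho_\lambda(x)$. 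Substituting these identifications, the three terms become exactly $\rho_\lambda(\dd_\frakg(x))v+\rho_\lambda(x)\dd_V(v)+\lambda\rho_\lambda(\dd_\frakg(x))\dd_V(v)$, which is the right-hand side. This completes the verification.

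I do not anticipate a genuine obstacle: the proof is a direct substitution argument, and the only thing one must be slightly careful about is bookkeeping the shift $x\mapsto x+\lambda\dd_\frakg(x)$ consistently and recognizing that applying $\dd_\frakg$ to a shifted element produces again a shifted element (of $\dd_\frakg(x)$), which is what makes the $\rho_\lambda$-terms reassemble correctly. If anything counts as the "hard part," it is simply noticing at the outset that $x\mapsto x+\lambda\dd_\frakg(x)$ is an endomorphism of the Lie algebra, which makes both axioms fall out by pullback along this map together with the original differential-representation structure; once that is seen the rest is mechanical.
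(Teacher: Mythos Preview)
Your proof is correct. The paper itself does not give a proof of this lemma, treating it as a straightforward weighted version of \cite[Proposition 3.1]{Lue66}; your direct verification via the observation that $x\mapsto x+\lambda\dd_\frakg(x)$ is a Lie algebra endomorphism is exactly the natural argument one would supply.
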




\subsection{Cohomology of differential operators}\label{subsec:cohomologydo}\

In this subsection, we define a cohomology theory  of differential operators.

Recall that
the  Chevalley-Eilenberg cochain complex of a Lie algebra $\frakg$ with coefficients in a representation $(V,\rho)$  is the cochain complex  $(\rmC^*_\Alg(\frakg, V),\partial_{\Alg}^*),$
where  for $n\geq 0$, $\rmC^n_\Alg(\frakg,   V)=\Hom(\wedge^n\frakg,  V)$ (in particular, $\rmC^0_\Alg(\frakg, V)=V $) and  the coboundary operator $$\partial_{\Alg}^n: \rmC^n_\Alg(\frakg, V)\longrightarrow  \rmC^{n+1}_\Alg(\frakg, V), n\geq 0$$ is given by
\[\begin{split}
	\partial_{\Alg}^n (f)(x_1,\dots,x_{n+1})&=\sum_{i=1}^{n+1}(-1)^{i+n}\rho(x_i) f(x_1,\dots,\hat{x}_i, \dots, x_{n+1})\\
	&+\sum_{1\leq i<j\leq n+1} (-1)^{i+j+n+1}f([x_i, x_j], x_1,\dots,\hat{x}_i, \dots,\hat{x}_j, \dots,x_{n+1}),
\end{split}\]
for all $f\in \rmC^n_\Alg(\frakg, V ),~x_1,\dots, x_{n+1}\in \frakg$, where $\hat{x_i}$ means deleting this element.  The corresponding  Chevalley-Eilenberg cohomology is denoted by  $\rmH^*_\Alg(\frakg, V )$.

When $V $ is the  adjoint representation $\frakg_{\ad}$, we write $\rmH^n_\Alg(\frakg):=\rmH^n_\Alg(\frakg, V ), n\geq 0$.

\begin{defn}  Let $(\frakg,\dd_\frakg)$ be a differential Lie algebra of weight $\lambda$  and  $(V, \dd_V)$ be a differential  representation over $(\frakg, \dd_\frakg)$.
	The \textbf{cochain complex   of the differential operator $\dd_\frakg$ with coefficients in  the differential representation $(V,\dd_V)$} is defined to be the Chevalley-Eilenberg cochain complex of the  Lie algebra $\frakg$ with coefficients in the new differential representation $V_\lambda$, denoted by $(\rmC^*_\diffo(\frakg,V), \partial_\DO^*):=(\rmC^*_\Lie(\frakg,V_\lambda), \partial_\Lie^*)$.
	The corresponding cohomology, denoted by $\rmH^*_\diffo(\frakg,V)$, is called the {\bf cohomology of the differential operator $\dd_\frakg$ with coefficients in the representation} $(V_\lambda,\dd_V)$.
\end{defn}
More precisely, for $n\geq 0$, $\rmC^n_\diffo(\frakg,V)=\Hom(\wedge^n\frakg,  V)$ and the coboundary operator

$\partial_\DO^n: \rmC^n_\diffo(\frakg,V)\to \rmC^{n+1}_\diffo(\frakg,V)$ is given by
$$\begin{aligned}
	\partial_\DO^n  (f)(x_1,\dots,x_{n+1})&=\sum_{i=1}^{n+1}(-1)^{i+n}\rho_\lambda(x_i) f(x_1,\dots,\hat{x}_i, \dots, x_{n+1})\\
	&\quad+\sum_{1\leq i<j\leq n+1}(-1)^{i+j+n+1}f([x_i, x_j], x_1,\dots,\hat{x}_i, \dots,\hat{x}_j, \dots,x_{n+1})\\
	&=\sum_{i=1}^{n+1}(-1)^{i+n}\rho(x_i+\lambda \dd_\frakg(x_i)) f(x_1,\dots,\hat{x}_i, \dots, x_{n+1})\\
	&\quad+\sum_{1\leq i<j\leq n+1}(-1)^{i+j+n+1}f([x_i, x_j], x_1,\dots,\hat{x}_i, \dots,\hat{x}_j, \dots,x_{n+1}),
\end{aligned}$$
for all $f\in \rmC^n_\diffo(\frakg, V),~x_1,\dots,x_{n+1}\in \frakg$.

\bigskip

\subsection{Cohomology of differential Lie algebras} \label{subsec:cohomologydasub}\

We now combine the classical Chevalley-Eilenberg  cohomology of Lie algebras and the newly defined cohomology of differential operators of weight $\lambda$ to  define the cohomology of the differential Lie algebra $(\frakg, \dd_\frakg)$ with coefficients in the representation $(V,\dd_V)$.  In fact, we will define the cochain complex of a differential Lie algebras as, up to shift and signs, the mapping cone of a cochain map from the Chevalley-Eilenberg  cochain complex of the Lie algebra to the cochain complex of the differential operators of weight $\lambda$.

Notice that \cite{JS23} has introduced the cohomology theory of a difference Lie algebra  with coefficients in a representation.  We in fact generalise their construction from weight $1$ to arbitrary weight.

Introduce   the linear maps
$$\delta^n: \rmC^n_\alg(\frakg,V)\rar \rmC^n_\DO(\frakg,  V)$$
by
\begin{align*}
	\delta^n f(x_1,\dots,x_n):=\sum_{k=1}^n\lambda^{k-1}\sum_{1\leq i_1<\cdots<i_k\leq n}f(x_1,\dots,\dd_\frakg(x_{i_1}),\dots,\dd_\frakg(x_{i_k}),\dots,x_n)-\dd_V f(x_1,\dots,x_n),
\end{align*}
for  $f\in \rmC^n_\alg(\frakg,V)$, $  n\geq 1$ and $$\delta^0 v= {-} \dd_V {(v)},\quad \forall v\in \rmC^0_\alg(\frakg,V)=V.$$

\begin{prop}\mlabel{prop:delta}
	The linear map $\delta$ is a cochain map from the cochain complex $(\rmC^*_\alg(\frakg,V),\partial_\alg^*)$ to $(\rmC^*_\DO(\frakg, V),\partial_\DO^*)$.
\end{prop}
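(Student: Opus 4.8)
The plan is to realise $\delta$ as a combination of three more primitive maps whose individual failures to commute with the differentials are transparent and cancel against one another.

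The crucial observation is that $\phi_\lambda:=\id_\frakg+\lambda\,\dd_\frakg\colon\frakg\to\frakg$ is a homomorphism of Lie algebras: the defining identity \eqref{Eq: diff} rearranges precisely to
\[
[\phi_\lambda(x),\phi_\lambda(y)]=[x,y]+\lambda\,\dd_\frakg([x,y])=\phi_\lambda([x,y]).
\]
Since $\rho_\lambda(x)=\rho(\phi_\lambda(x))$ by Lemma~\ref{lem:bmd}, the representation $V_\lambda$ is the pullback of $(V,\rho)$ along $\phi_\lambda$, so the pullback of cochains
\[
\phi_\lambda^\ast\colon\rmC^n_\alg(\frakg,V)\longrightarrow\rmC^n_\alg(\frakg,V_\lambda)=\rmC^n_\DO(\frakg,V),\qquad (\phi_\lambda^\ast f)(x_1,\dots,x_n)=f(\phi_\lambda x_1,\dots,\phi_\lambda x_n),
\]
is automatically a cochain map, $\phi_\lambda^\ast\partial_\alg^n=\partial_\DO^n\phi_\lambda^\ast$. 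Expanding $\phi_\lambda^\ast f$ by multilinearity of $f$ gives $\phi_\lambda^\ast f-f=\lambda(\delta^n f+\dd_V\circ f)$ for all $n$ and all $\lambda$; hence, writing $(\dd_V)_\ast f:=\dd_V\circ f$, one obtains the clean identity $\lambda\,\delta^n=\phi_\lambda^\ast-\id-\lambda\,(\dd_V)_\ast$ of maps $\rmC^n_\alg(\frakg,V)\to\rmC^n_\DO(\frakg,V)$.

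Next I would compute the defects of the remaining two pieces. From the representation axiom repackaged as $\dd_V(\rho(x)w)=\rho(\dd_\frakg x)w+\rho_\lambda(x)\dd_V w$, a short calculation on the Chevalley--Eilenberg formula yields
\[
(\dd_V)_\ast\circ\partial_\alg^n-\partial_\DO^n\circ(\dd_V)_\ast=E^n,\qquad E^n(g)(x_1,\dots,x_{n+1}):=\sum_{i=1}^{n+1}(-1)^{i+n}\rho(\dd_\frakg x_i)\,g(x_1,\dots,\hat{x}_i,\dots,x_{n+1}),
\]
while comparing the two boundary formulas, which differ only in $\rho_\lambda$ versus $\rho$ in the first summand, gives at once $\partial_\DO^n-\partial_\alg^n=\lambda\,E^n$. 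Substituting $\lambda\,\delta^n=\phi_\lambda^\ast-\id-\lambda(\dd_V)_\ast$ into $\lambda\big(\delta^{n+1}\partial_\alg^n-\partial_\DO^n\delta^n\big)$, the $\phi_\lambda^\ast$-terms cancel because $\phi_\lambda^\ast$ is a cochain map, and the remainder collapses to $(\partial_\DO^n-\partial_\alg^n)-\lambda E^n=0$. Thus $\lambda\big(\delta^{n+1}\partial_\alg^n-\partial_\DO^n\delta^n\big)=0$, which proves the proposition whenever $\lambda\neq0$.

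The main obstacle is exactly this last division by $\lambda$: the computation delivers the identity only after multiplying through by $\lambda$, so the case $\lambda=0$ must be handled separately. There $\partial_\DO^n=\partial_\alg^n$ and $\delta^n f=\sum_i f(x_1,\dots,\dd_\frakg x_i,\dots,x_n)-\dd_V\circ f$ is (up to sign) the Lie derivative along the now ordinary derivation $\dd_\frakg$, whose commutation with $\partial_\alg$ is the classical Cartan-type identity and a one-line verification; alternatively one simply expands $\delta^{n+1}\partial_\alg^n$ and $\partial_\DO^n\delta^n$ term by term, a computation valid uniformly in $\lambda$. This is also the point at which the proof recorded later in the paper takes a different route, deducing the statement from $d^2=0$ for the total complex underlying the $L_\infty[1]$-structure.
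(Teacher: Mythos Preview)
Your argument is correct and elegant, and it takes a genuinely different route from the paper's.  The paper postpones the verification entirely to Section~\ref{Sect: Applications}: there the cochain complex $(\rmC_{\Diffl}^*(\frakg,\frakg_\ad),\partial_{\Diffl}^*)$ is identified, up to shift and sign, with the underlying complex of the Maurer--Cartan twist of the $L_\infty[1]$-algebra $s\frakm\oplus\fraka$ (Proposition~\ref{cochaincomplexad}), so that $\partial_{\Diffl}^2=0$---and hence the chain-map property of $\delta$---is inherited from $(l_1^{(s\mu,\dd)})^2=0$.  The general coefficient case is then read off from the trivial extension $\frakg\ltimes V$ (Proposition~\ref{dlcohomology}).

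Your proof, by contrast, is entirely elementary and self-contained: the single structural input is that $\phi_\lambda=\id+\lambda\,\dd_\frakg$ is a Lie algebra homomorphism realising $V_\lambda$ as a pullback, after which the decomposition $\lambda\,\delta=\phi_\lambda^\ast-\id-\lambda(\dd_V)_\ast$ reduces everything to two transparent defect computations.  This buys a short direct argument independent of the derived-bracket machinery, at the cost of the case split at $\lambda=0$.  You can in fact remove that split cleanly: all the maps involved are polynomial in $\lambda$, so working over $\bfk[\lambda]$ with $\lambda$ an indeterminate, your identity $\lambda\big(\delta^{n+1}\partial_\alg^n-\partial_\DO^n\delta^n\big)=0$ holds in a torsion-free $\bfk[\lambda]$-module and therefore forces $\delta^{n+1}\partial_\alg^n=\partial_\DO^n\delta^n$ identically, whence for every specialisation of $\lambda$ in $\bfk$.
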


Although a direct proof is possible, we shall deduce the above result   from the  $L_\infty[1]$-structure; see Proposition~\ref{dlcohomology}.


\begin{remark}
	Note that $\rmC^n_\DO(\frakg,V)$ equals to $\rmC^n_\alg(\frakg,V)$ as linear spaces but they are not equal as cochain complexes unless $\lambda=0$. When $\lambda$ is not zero, a new representation structure is needed to define $\partial_\DO^*$ which eventually leads to the rather long and technical argument in order to establish the cochain map in  Proposition~\mref{prop:delta}.
	\mlabel{rk:nonzero}
\end{remark}
Now we can define the cochain complex of a differential Lie algebras $(\frakg,\dd_\frakg)$ with coefficients in the  differential representation $(V,\dd_V)$.

\begin{defn}\label{def: cochain complex for differential Lie algebras}
	Define the \textbf{cochain complex $(\rmC_{\Diffl}^*(\frakg, V),\partial^*_{\Diffl})$ of the differential Lie algebra $(\frakg, \dd_\frakg)$ with coefficients in the differential representation} $(V,\dd_V)$  to be the negative shift of
	the mapping cone of the cochain map $\delta^{*}: (\rmC^*_\alg(\frakg,V),{\partial}_\alg^*) \rar (\rmC^*_\DO(\frakg,V), {\partial}_\DO^*)$. More precisely, the space of $n$-cochains is given by
	\begin{equation*}\label{eq:dac}
		\rmC_{\Diffl}^n(\frakg,V):=
		\begin{cases}
			\rmC^n_\alg(\frakg,V)\oplus \rmC^{n-1}_\DO(\frakg,V),&n\geq1,\\
			\rmC^0_\alg(\frakg,V)=V,&n=0.
		\end{cases}
	\end{equation*}
	and   the differential   $\partial_{\Diffl}^n: \rmC_{\Diffl}^n(g,V)\rar \rmC_{\Diffl}^{n+1}(g,V)$ by
	\begin{eqnarray*}\label{eq:pda}
		\partial_{\Diffl}^n(f,g)&:=&(\partial_\alg^n f,  -\partial_\DO^{n-1} g-\delta^n f),\quad \forall f\in \rmC^n_\alg(\frakg,V),\,g\in \rmC^{n-1}_\DO(\frakg,V),\quad n\geq 1,\\
		\label{eq:pda2}\partial_{\Diffl}^0 {v}&:=&(\partial_\alg^0 {v}, \delta^0 v),\quad\forall {v}\in \rmC^0_\alg(\frakg,V)=V.
	\end{eqnarray*}	
\end{defn}

%

\begin{defn}\label{def:dlcohomology}
	The cohomology of the cochain complex $(\rmC_{\Diffl}^*(\frakg, V),\partial^*_{\Diffl})$, denoted by $\rmH_{\Diffl}^*(\frakg, V)$, is called the {\bf cohomology of the differential Lie algebra} $(\frakg, \dd_\frakg)$ with coefficients in the differential representation $(V,\dd_V)$.
\end{defn}

In the next two sections, we will use the following remarks.
\begin{remark}\label{ex:cocycle}
	We compute 0-cocycles, 1-cocycles and 2-cocycles of the cochain complex
	$\rmC_{\Diffl}^*(\frakg,V)$.
	
	It is obvious that for all $v\in V$, $\partial^0_{\Diffl} v=0$ if and only if
	$$\partial^0_\alg  v=0,\quad \dd_V(v)=0.$$
	
	For all $(f,v)\in\Hom (\frakg,V)\oplus V$, $\partial^1_{\Diffl} (f,v)=0$ if and only if $\partial^1_\alg  f=0$ and
	$$
	\rho_\lambda(x) v=f(\dd_\frakg(x))-\dd_V(f(x)),\quad \forall x\in \frakg.
	$$
	
	For all $(f,g)\in\Hom (\wedge^2\frakg,V)\oplus\Hom (\frakg,V) $,  $\partial^2_{\Diffl}(f,g)=0$ if and only if $\partial^2_\alg f=0,$ and
	\begin{align}\label{2-cocycle}
		\rho_\lambda(x) g(y)- \rho_\lambda(y) g(x)-g([x, y])=  -\lambda f(\dd_\frakg(x),\dd_\frakg(y))-f(\dd_\frakg(x),y)-f(x,\dd_\frakg(y))+\dd_V(f(x,y)),\
	\end{align}
	for all $x,y\in \frakg.$
\end{remark}
\begin{remark}
	we shall need a subcomplex of the cochain complex $\rmC_{\Diffl}^*(\frakg, V)$. Let
	\begin{equation*}\label{eq:dacsub}
		\tilde{\rmC}_{\Diffl}^n(\frakg,V):=
		\begin{cases}
			\rmC^n_\alg(\frakg,V)\oplus \rmC^{n-1}_\diffo(\frakg,V),&n\geq2,\\
			\rmC^1_\alg(\frakg,V),&n=1,\\
			0,&n=0.
		\end{cases}
	\end{equation*}
	Then it is obvious that $(\tilde{\rmC}_{\Diffl}^*(\frakg,V)=\oplus_{n=0}^\infty\tilde{\rmC}_{\Diffl}^n(\frakg,V),\partial_{\Diffl}^* )$  is a subcomplex of the cochain complex  $(\rmC_{\Diffl}^*(\frakg, V),\partial_{\Diffl}^*)$. We  denote its cohomology by $\tilde{\rmH}_{\Diffl}^*(\frakg,V)$. Obviously, $\tilde{\rmH}_{\Diffl}^n(\frakg,V)= {\rmH}_{\Diffl}^n(\frakg,V)$ for $n>2$.
	
\end{remark}

\section{Abelian extensions of differential Lie  algebras} \label{sec:ext}
In this section, we study abelian extensions of differential Lie  algebras of weight $\lambda$ and show that they are classified by the second cohomology, as one would expect of a good cohomology theory.

\subsection{Abelian extensions}\

Notice that   a vector space    $V$  together   with a linear endomorphism $\dd_V$   can be considered as a \textbf{trivial differential Lie  algebra of weight} $\lambda$
endowed with the trivial Lie bracket $[u, v]=0$ for all $u,v \in  V$.
\begin{defn}
	An {\bf abelian extension} of differential Lie  algebras is a short exact sequence of homomorphisms of differential Lie algebras
	\[\begin{CD}
		0@>>> {V} @>i >> \hat{\frakg} @>p >> \frakg @>>>0\\
		@. @V \dd_{V} VV @V \dd_{\hat{\frakg}} VV @V \dd_\frakg VV @.\\
		0@>>> {V} @>i >> \hat{\frakg} @>p >> \frakg @>>>0
	\end{CD}\]
	where  $(V, \dd_V)$ is a trivial differential Lie algebra.
	We will call $(\hat{\frakg},\dd_{\hat{\frakg}})$ an \textbf{abelian extension} of $(\frakg,\dd_\frakg)$ by $(V,\dd_V)$.
\end{defn}

\begin{defn}
	Let $(\hat{\frakg}_1,\dd_{\hat{\frakg}_1})$ and $(\hat{\frakg}_2,\dd_{\hat{\frakg}_2})$ be two abelian extensions of $(\frakg,\dd_\frakg)$ by $(V,\dd_V)$. They are said to be {\bf isomorphic} if there exists an isomorphism of differential Lie algebras $\zeta:(\hat{\frakg}_1,\dd_{\hat{\frakg}_1})\rar (\hat{\frakg}_2,\dd_{\hat{\frakg}_2})$ such that the following commutative diagram holds:
	\[\begin{CD}
		0@>>> {(V,\dd_V)} @>i >> (\hat{\frakg}_1,\dd_{\hat{\frakg}_1}) @>p >> (\frakg,\dd_\frakg) @>>>0\\
		@. @| @V \zeta VV @| @.\\
		0@>>> {(V,\dd_V)} @>i >> (\hat{\frakg}_2,\dd_{\hat{\frakg}_2}) @>p >> (\frakg,\dd_\frakg) @>>>0.
	\end{CD}\]
\end{defn}

A {\bf section} of an abelian extension $(\hat{\frakg},\dd_{\hat{\frakg}})$ of $(\frakg,\dd_\frakg)$ by $(V,\dd_V)$ is a linear map $s:\frakg\rar \hat{\frakg}$ such that $p\circ s=\Id_\frakg$.

Now for an abelian extension $(\hat{\frakg},\dd_{\hat{\frakg}})$ of $(\frakg,\dd_\frakg)$ by $(V,\dd_V)$ with a section $s:\frakg\rar \hat{\frakg}$, then there exist a unique linear map $t:\hat{\frakg}\rar V$ such that:
$$t\circ i=\mathrm{Id}_V, t\circ s=0, i\circ t+s\circ p=\mathrm{Id}_{\hat{\frakg}},$$
then we can define $$
\rho(x)v:=t[s(x),i(v)]_{\hat{\frakg}}, \quad \forall x\in \frakg, v\in V.
$$
For convenience, we denote it by  $\rho(s(x))v,$ and observe that $[s(x),i(v)]_{\hat{\frakg}}\in i(V)$, and we identify $V$ with $i(V).$

Now we get a linear map $\rho: \frakg\to \mathfrak{gl}(V),~ x\mapsto (v\mapsto \rho(x)v) .$

\begin{prop}
	With the above notations, $(V,\rho, \dd_V)$ is a representation over the differential Lie algebra $(\frakg,\dd_\frakg)$.
\end{prop}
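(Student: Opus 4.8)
The plan is to verify the defining axioms of a differential representation directly from the construction of $\rho$, using the section $s$ and its complement $t$, together with the fact that $(\hat{\frakg},\dd_{\hat{\frakg}})$ is a differential Lie algebra and that $V$ is an ideal with trivial bracket. First I would check that $(V,\rho)$ is an ordinary representation of the Lie algebra $\frakg$: this is the classical computation for abelian extensions of Lie algebras. Concretely, one expands $\rho([x,y])v - \rho(x)\rho(y)v + \rho(y)\rho(x)v$ using $\rho(x)v = t[s(x),i(v)]$, writes $s([x,y]) = [s(x),s(y)] - i(\omega(x,y))$ for the (as yet unnamed) $2$-cochain $\omega$ measuring the failure of $s$ to be a Lie homomorphism, and invokes the Jacobi identity in $\hat{\frakg}$ together with $[i(V),i(V)]=0$; the terms involving $\omega$ drop out because $i(V)$ is an abelian ideal. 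I would also note that $\rho(x)$ indeed maps $V$ to $V$, since $[s(x),i(v)]_{\hat{\frakg}}\in i(V)$ as $i(V)$ is an ideal.

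Next I would establish the differential compatibility
\[
\dd_V(\rho(x)v) = \rho(\dd_\frakg(x))v + \rho(x)\dd_V(v) + \lambda\,\rho(\dd_\frakg(x))\dd_V(v),\quad \forall x\in\frakg,\ v\in V.
\]
The key inputs are: $\dd_{\hat{\frakg}}\circ i = i\circ \dd_V$ and $p\circ \dd_{\hat{\frakg}} = \dd_\frakg\circ p$ (commutativity of the extension diagram), hence $t\circ \dd_{\hat{\frakg}} = \dd_V\circ t$ on $i(V)$; and the weight-$\lambda$ derivation rule for $\dd_{\hat{\frakg}}$ applied to $[s(x),i(v)]_{\hat{\frakg}}$. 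Applying $\dd_{\hat{\frakg}}$ to $[s(x),i(v)]$ gives
\[
[\dd_{\hat{\frakg}}(s(x)),i(v)] + [s(x),\dd_{\hat{\frakg}}(i(v))] + \lambda[\dd_{\hat{\frakg}}(s(x)),\dd_{\hat{\frakg}}(i(v))].
\]
Now $\dd_{\hat{\frakg}}(i(v)) = i(\dd_V(v))$, and $\dd_{\hat{\frakg}}(s(x))$ need not equal $s(\dd_\frakg(x))$, but since $p(\dd_{\hat{\frakg}}(s(x))) = \dd_\frakg(p(s(x))) = \dd_\frakg(x) = p(s(\dd_\frakg(x)))$, the difference $\dd_{\hat{\frakg}}(s(x)) - s(\dd_\frakg(x))$ lies in $i(V)$; call it $i(u_x)$. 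Substituting and using $[i(V),i(V)]=0$ to kill the terms where $i(u_x)$ is bracketed against $i(\dd_V(v))$, then applying $t$ and using $t\circ\dd_{\hat{\frakg}} = \dd_V\circ t$, the bracket $[i(u_x),i(v)]=0$ terms also vanish, leaving exactly the three required terms $\rho(\dd_\frakg(x))v$, $\rho(x)\dd_V(v)$, and $\lambda\rho(\dd_\frakg(x))\dd_V(v)$.

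The main obstacle, such as it is, is bookkeeping: carefully tracking which correction terms lie in $i(V)$ and invoking the vanishing $[i(V),i(V)]=0$ at the right moments, and making sure the intertwining relation $t\circ\dd_{\hat{\frakg}}=\dd_V\circ t$ is used only where it is valid (namely after landing inside $i(V)$, which is automatic here since all relevant brackets land in $i(V)$). There is no deep difficulty; the construction is designed precisely so that the abelian ideal structure forces all the extraneous terms to disappear. I would therefore present the Lie-representation part briskly (citing the standard abelian extension argument) and spell out the differential compatibility computation in the few lines sketched above.
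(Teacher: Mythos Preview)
Your proposal is correct and follows essentially the same approach as the paper: both arguments rely on the fact that the correction terms $s([x,y])-[s(x),s(y)]$ and $\dd_{\hat{\frakg}}(s(x))-s(\dd_\frakg(x))$ lie in the abelian ideal $V$, hence act trivially, and then invoke the Jacobi identity (for the Lie-representation part) and the weight-$\lambda$ derivation rule for $\dd_{\hat{\frakg}}$ (for the differential compatibility). The paper just phrases this a bit more tersely by identifying $V$ with $i(V)$ and writing $\rho(s([x,y]))v=\rho([s(x),s(y)])v$ directly, rather than naming the cochains $\omega$ and $u_x$ as you do.
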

\begin{proof}
	For arbitrary  $x,y\in\frakg, v\in V$, since $s([x, y]_{\frakg})-[s(x), s(y)]_{\hat{\frakg}}\in V$ implies $s([x, y]_{\frakg})v=[s(x), s(y)]_{\hat{\frakg}}v$, we have
	\[\rho([x, y]_{\frakg})(v)=\rho(s([x, y]_{\frakg}))v=\rho([s(x), s(y)]_{\hat{\frakg}})v\xlongequal{\mbox{Jacobi identity}}\rho(x)\rho(y)(v)-\rho(y)\rho(x)(v).\]
	Hence, $\rho$ is a Lie algebra homomorphism.  Moreover, $\dd_{\hat{\frakg}}(s(x))-s(\dd_\frakg(x))\in V$ means that  $$\rho(\dd_{\hat{\frakg}}(s(x)))v=\rho(s(\dd_\frakg(x)))v.$$ Thus we have
	\begin{align*}
		\dd_V(\rho(x)v)&=\dd_V(\rho(s(x))v)=\dd_{\hat{\frakg}}(\rho(s(x))v)\\
		&=\rho(\dd_{\hat{\frakg}}(s(x)))v+\rho(s(x))\dd_{\hat{\frakg}}(v)+\lambda \rho(\dd_{\hat{\frakg}}(s(x)))\dd_{\hat{\frakg}}(v)\\
		&=\rho(s(\dd_\frakg(x)))v+\rho(s(x))\dd_V(v)+\lambda \rho(s(\dd_\frakg(x)))\dd_V(v)\\
		&=\rho(\dd_\frakg(x))v+\rho(x)\dd_V(v)+\lambda \rho(\dd_\frakg(x))\dd_V(v).
	\end{align*}
	Hence, $(V,\rho, \dd_V)$ is a representation over $(\frakg,\dd_\frakg)$.
\end{proof}

We  further  define linear maps $\psi:\frakg\otimes \frakg\rar V$ and $\chi:\frakg\rar V$ respectively by
\begin{align*}
	\psi(x,y)&=[s(x), s(y)]_{\hat{\frakg}}-s([x, y]_{\frakg}),\quad\forall x,y\in \frakg,\\
	\chi(x)&=\dd_{\hat{\frakg}}(s(x))-s(\dd_\frakg(x)),\quad\forall x\in \frakg.
\end{align*}
We transfer the differential Lie algebra structure on $\hat{\frakg}$ to $\frakg\oplus V$ by endowing $\frakg\oplus V$ with a multiplication $[\cdot, \cdot]_\psi$ and a differential operator $\dd_\chi$ defined by
\begin{align}
	\label{eq:mul}[x+u, y+v]_\psi&=[x, y]+\rho(x)v-\rho(y)u+\psi(x,y),\,\forall x,y\in \frakg,\,u,v\in V,\\
	\label{eq:dif}\dd_\chi(x+v)&=\dd_\frakg(x)+\chi(x)+\dd_V(v),\,\forall x\in \frakg,\,v\in V.
\end{align}

\begin{prop}\label{prop:2-cocycle}
	The triple $(\frakg\oplus V,[\cdot, \cdot]_\psi,\dd_\chi)$ is a differential Lie algebra   if and only if
	$(\psi,\chi)$ is a 2-cocycle  of the differential Lie algebra $(\frakg,\dd_\frakg)$ with the coefficient  in $(V,\dd_V)$.
\end{prop}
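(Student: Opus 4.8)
The plan is to verify directly that the two structural requirements for $(\frakg\oplus V,[\cdot,\cdot]_\psi,\dd_\chi)$ to be a differential Lie algebra of weight $\lambda$ — namely, that $[\cdot,\cdot]_\psi$ is a Lie bracket, and that $\dd_\chi$ is a derivation of weight $\lambda$ for this bracket — translate exactly into the pair of conditions $\partial_\alg^2\psi=0$ and the equation \eqref{2-cocycle} (with $f=\psi$, $g=\chi$) describing $2$-cocycles in Remark~\ref{ex:cocycle}. So the proof splits into two halves, matching the two summands of $\rmC^2_{\Diffl}(\frakg,V)=\rmC^2_\alg(\frakg,V)\oplus\rmC^1_\DO(\frakg,V)$.

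First I would treat the Lie-algebra part. The bracket $[\cdot,\cdot]_\psi$ on $\frakg\oplus V$ is automatically antisymmetric, so the only issue is the Jacobi identity. Expanding $[[x+u,y+v]_\psi,z+w]_\psi$ and cyclic permutations using \eqref{eq:mul}, the terms valued in $\frakg$ cancel by the Jacobi identity in $\frakg$; the terms valued in $V$ that involve only $\rho$ cancel because $\rho$ is a representation (the $u,v,w$-linear parts); and the remaining $V$-valued terms, which are exactly those linear in $\psi$, assemble into precisely the Chevalley--Eilenberg coboundary condition $\partial_\alg^2\psi=0$. This is the classical computation identifying $2$-cocycles with abelian (here, square-zero-ideal) extensions of Lie algebras, so I would not belabour it.

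Second, and this is where the weight genuinely enters, I would impose the weighted Leibniz rule \eqref{Eq: diff} for $\dd_\chi$: namely
\[
\dd_\chi([x+u,y+v]_\psi)=[\dd_\chi(x+u),y+v]_\psi+[x+u,\dd_\chi(y+v)]_\psi+\lambda[\dd_\chi(x+u),\dd_\chi(y+v)]_\psi.
\]
Using \eqref{eq:dif} one expands both sides. The $\frakg$-components reduce to the statement that $\dd_\frakg$ is a weight-$\lambda$ derivation on $\frakg$, hence hold automatically. The $V$-components split further: the parts linear in $u,v$ reduce, via the compatibility axiom defining a differential representation together with the identity $\rho_\lambda(x)=\rho(x+\lambda\dd_\frakg(x))$ of Lemma~\ref{lem:bmd}, to an automatically satisfied identity (this is the content of why the twisted action $\rho_\lambda$ is the right one); and the parts independent of $u,v$ — collecting the terms $\chi([x,y])$, $\psi(\dd_\frakg x,y)$, $\psi(x,\dd_\frakg y)$, $\lambda\psi(\dd_\frakg x,\dd_\frakg y)$, $\dd_V(\psi(x,y))$, $\rho(\dd_\frakg x)\chi(y)$, $\rho(x)\chi(y)$, etc. — rearrange into exactly \eqref{2-cocycle}. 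Running this in reverse shows the converse. I would present the $V$-valued computation with the $u,v$-dependence suppressed first (since that part is forced) and then the $u,v$-free part in detail.

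The main obstacle is bookkeeping in the second half: keeping track of the $\lambda$-weighted cross terms $\lambda[\dd_\chi(\cdot),\dd_\chi(\cdot)]_\psi$, which produce both a $\lambda\rho(\dd_\frakg x)\dd_V(v)$-type contribution (absorbed by the differential-representation axiom) and a $\lambda\psi(\dd_\frakg x,\dd_\frakg y)$ contribution (which lands in \eqref{2-cocycle}), and making sure the twisting $\rho\rightsquigarrow\rho_\lambda$ is applied consistently so that the $g$-terms in \eqref{2-cocycle} come out as $\rho_\lambda(x)\chi(y)-\rho_\lambda(y)\chi(x)-\chi([x,y])$ rather than with the untwisted $\rho$. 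Once the correct grouping is found, each identity is a one-line consequence of an axiom already recorded in Section~\ref{sec:cohomologyda}, so no deep new input is needed beyond careful sign- and $\lambda$-tracking.
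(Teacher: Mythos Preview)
Your proposal is correct and follows essentially the same approach as the paper: directly expand the Jacobi identity for $[\cdot,\cdot]_\psi$ to obtain $\partial_\alg^2\psi=0$, and expand the weighted Leibniz rule for $\dd_\chi$ to obtain the condition \eqref{2-cocycle}, noting that the $\frakg$-components and the $u,v$-linear $V$-components vanish automatically by the ambient axioms. The paper's proof is terser (it simply records the two resulting equations \eqref{eq:mc} and \eqref{eq:dc} and leaves the converse to the reader), but your more explicit bookkeeping of the $\lambda$-twist $\rho\rightsquigarrow\rho_\lambda$ and the $u,v$-dependent pieces is exactly what underlies those lines.
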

\begin{proof}
	If $(\frakg\oplus V,[\cdot, \cdot]_\psi,\dd_\chi)$ is a differential Lie algebra, then  $[\cdot, \cdot]_\psi$ implies
	\begin{equation}
		\label{eq:mc}\psi (x, [y,z])+\rho(x)\psi(y,z)+\psi (y, [z, x])\\
		+\rho(y)\psi(z,x)+\psi (z, [x,y])+\rho(z) \psi(x,y)=0,
	\end{equation}
for arbitrary  $x,y,z\in\frakg$. Since $\dd_\chi$ satisfies (\ref{Eq: diff}), we deduce that
	\begin{equation}
		\label{eq:dc}\chi([x, y])-\rho_\lambda(x)\chi(y)+\rho_\lambda(y)\chi(x) +\dd_V(\psi(x,y))-\psi(\dd_\frakg(x),y)-\psi(x,\dd_\frakg(y))-\lambda \psi(\dd_\frakg(x),\dd_\frakg(y))=0.
	\end{equation} Hence, $(\psi,\chi)$ is a  2-cocycle, see Eq.~(\ref{2-cocycle}).
	
	Conversely, if $(\psi,\chi)$ satisfies equalities~\eqref{eq:mc} and \eqref{eq:dc}, one can easily check that $(\frakg\oplus V,[\cdot, \cdot]_\psi,\dd_\chi)$ is a differential Lie algebra.
\end{proof}

\subsection{Classificaiton for Abelian extensions}\

Now we are ready to classify abelian extensions of a differential Lie algebra.

\begin{theorem}
	Let $V$ be a vector space and  $\dd_V\in\End_\bk(V)$.
	Then abelian extensions of a differential Lie algebra $(\frakg,\dd_\frakg)$ by $(V,\dd_V)$ are classified by the second cohomology group ${\rm\tilde{H}}_{\Diffl}^2(\frakg,V)$ of $(\frakg,\dd_\frakg)$ with coefficients in the representation $(V,\dd_V)$.
\end{theorem}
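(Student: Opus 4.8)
The plan is to exhibit two mutually inverse constructions --- one sending an abelian extension (together with a choice of section) to a $2$-cocycle, the other sending a $2$-cocycle to an abelian extension --- and then to check that both descend to well-defined, inverse bijections between isomorphism classes of abelian extensions and $\tilde{\rmH}^2_{\Diffl}(\frakg,V)$. Here one first fixes the representation: every abelian extension induces a representation $(V,\rho,\dd_V)$ via $\rho(x)v=t[s(x),i(v)]_{\hat\frakg}$, and this $\rho$ is independent of the section $s$, so it is harmless to fix $\rho$ and classify the extensions that induce it (this is the content of the theorem, the phrase ``with coefficients in $(V,\dd_V)$'' being understood with this $\rho$).

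\emph{From extensions to cocycles.} Given an abelian extension $(\hat\frakg,\dd_{\hat\frakg})$ with section $s$, put $\psi(x,y)=[s(x),s(y)]_{\hat\frakg}-s([x,y]_\frakg)$ and $\chi(x)=\dd_{\hat\frakg}(s(x))-s(\dd_\frakg(x))$; then $(\psi,\chi)\in\rmC^2_\alg(\frakg,V)\oplus\rmC^1_\DO(\frakg,V)=\tilde{\rmC}^2_{\Diffl}(\frakg,V)$ is a $2$-cocycle by Proposition~\ref{prop:2-cocycle}. To see this passes to cohomology, I would check that replacing $s$ by another section $s'=s+b$, where necessarily $b\in\Hom(\frakg,V)=\tilde{\rmC}^1_{\Diffl}(\frakg,V)$, leaves $\rho$ unchanged and changes $(\psi,\chi)$ by $\partial^1_{\Diffl}(b)$: since $i$ is a homomorphism of differential Lie algebras and the bracket on $V$ is trivial, a short computation gives $\psi'(x,y)-\psi(x,y)=\rho(x)b(y)-\rho(y)b(x)-b([x,y])=\partial^1_\alg b(x,y)$ and $\chi'(x)-\chi(x)=\dd_V(b(x))-b(\dd_\frakg(x))=-\delta^1 b(x)$, i.e. $(\psi',\chi')-(\psi,\chi)=\partial^1_{\Diffl}(b)$. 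Moreover an isomorphism $\zeta$ of abelian extensions carries a section of its target to one of its source and, being the identity on $V$ and on $\frakg$, yields the \emph{same} pair $(\psi,\chi)$; hence the class in $\tilde{\rmH}^2_{\Diffl}(\frakg,V)$ is an isomorphism invariant of the extension.

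\emph{From cocycles to extensions.} Conversely, a $2$-cocycle $(\psi,\chi)\in\tilde{\rmC}^2_{\Diffl}(\frakg,V)$ gives, by Proposition~\ref{prop:2-cocycle}, a differential Lie algebra $(\frakg\oplus V,[\cdot,\cdot]_\psi,\dd_\chi)$ defined by \eqref{eq:mul} and \eqref{eq:dif}; with the inclusion $v\mapsto v$ and projection $x+v\mapsto x$ this is an abelian extension of $(\frakg,\dd_\frakg)$ by $(V,\dd_V)$ inducing $\rho$. If $(\psi',\chi')=(\psi,\chi)+\partial^1_{\Diffl}(b)$, then $x+v\mapsto x-b(x)+v$ is an isomorphism of the resulting two extensions (compatibility with brackets and differentials is exactly the two identities above), so this assignment descends to cohomology classes as well. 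Finally the two assignments are mutually inverse on classes: starting from $(\psi,\chi)$, the canonical section $x\mapsto x+0$ of $\frakg\oplus V$ admits the retraction $t(x+v)=v$, induces the original $\rho$, and returns $(\psi,\chi)$ unchanged; starting from $(\hat\frakg,\dd_{\hat\frakg})$ with section $s$, the map $x+v\mapsto s(x)+i(v)$ is an isomorphism of abelian extensions onto $(\hat\frakg,\dd_{\hat\frakg})$ --- bijective because $i\circ t+s\circ p=\Id_{\hat\frakg}$, and compatible with brackets and differentials precisely by the definitions of $\psi$, $\chi$ and $\rho$.

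The only genuinely delicate point --- beyond Proposition~\ref{prop:2-cocycle}, which already folds the weight-$\lambda$ terms and the twisted action $\rho_\lambda$ of $V_\lambda$ into the cocycle identity \eqref{2-cocycle} --- is to isolate the correct complex: the ambiguity in a section is exactly a linear map $\frakg\to V$, i.e. an element of $\rmC^1_\alg(\frakg,V)=\tilde{\rmC}^1_{\Diffl}(\frakg,V)$, with no $\rmC^0_\DO(\frakg,V)=V$ summand and no degree-$0$ piece, which is why the relevant invariant is $\tilde{\rmH}^2_{\Diffl}(\frakg,V)$ rather than $\rmH^2_{\Diffl}(\frakg,V)$. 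Once this is pinned down, checking that $\partial^1_{\Diffl}$ on $\tilde{\rmC}^1_{\Diffl}$ reproduces the change-of-section formulas --- with all Koszul signs matching those built into $\partial^1_\alg$ and $\delta^1$ --- is routine but bookkeeping-heavy, and I expect that to be the main (if modest) obstacle.
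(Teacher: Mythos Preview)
Your proposal is correct and follows essentially the same route as the paper: both construct the cocycle $(\psi,\chi)$ from a section, verify section-independence by the same computation $(\psi',\chi')-(\psi,\chi)=\partial^1_{\Diffl}(b)$, check isomorphism-invariance by transporting sections along $\zeta$, and build the inverse via the twisted semidirect product $(\frakg\oplus V,[\cdot,\cdot]_\psi,\dd_\chi)$ with the isomorphism $x+v\mapsto x\pm\phi(x)+v$. Your explicit verification that the two assignments are mutually inverse and your remark on why $\tilde{\rmH}^2$ (rather than $\rmH^2$) is the correct target are slight additions to the paper's argument, but the approach is the same.
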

\begin{proof}
	Let $(\hat{\frakg},\dd_{\hat{\frakg}})$ be an abelian extension of $(\frakg,\dd_\frakg)$ by $(V,\dd_V)$. We choose a section $s:\frakg\rar \hat{\frakg}$ to obtain a 2-cocycle $(\psi,\chi)$ by Proposition~\ref{prop:2-cocycle}. We first show that the cohomological class of $(\psi,\chi)$ does not depend on the choice of sections. Indeed, let $s_1$ and $s_2$ be two distinct sections providing 2-cocycles $(\psi_1,\chi_1)$ and $(\psi_2,\chi_2)$ respectively. We define $\phi:\frakg\rar V$ by $\phi(x)=s_1(x)-s_2(x)$. Then
	\begin{align*}
		\psi_1(x,y)&=[s_1(x), s_1(y)]-s_1([x, y])\\
		&=[s_2(x)+\phi(x), s_2(y)+\phi(y)]-(s_2([x, y])+\phi([x, y]))\\
		&=([s_2(x), s_2(y)]-s_2([x, y]))+[s_2(x), \phi(y)]+[\phi(x), s_2(y)]-\phi([x, y])\\
		&=([s_2(x), s_2(y)]-s_2([x, y]))+[x, \phi(y)]+[\phi(x), y]-\phi([x, y])\\
		&=\psi_2(x,y)+\partial^1_\alg\phi(x,y),
	\end{align*}
	and
	\begin{align*}
		\chi_1(x)&=\dd_{\hat{\frakg}}(s_1(x))-s_1(\dd_\frakg(x))\\
		&=\dd_{\hat{\frakg}}(s_2(x)+\phi(x))-(s_2(\dd_\frakg(x))+\phi(\dd_\frakg(x)))\\
		&=(\dd_{\hat{\frakg}}(s_2(x))-s_2(\dd_\frakg(x)))+\dd_{\hat{\frakg}}(\phi(x))-\phi(\dd_\frakg(x))\\
		&=\chi_2(x)+\dd_V(\phi(x))-\phi(\dd_\frakg(x))\\
		&=\chi_2(x)-\delta^1\phi(x).
	\end{align*}
	That is, $(\psi_1,\chi_1)=(\psi_2,\chi_2)+\partial^1_{\Diffl}(\phi)$. Thus $(\psi_1,\chi_1)$ and $(\psi_2,\chi_2)$ are in the same cohomological class  {in $\tilde{\rm{H}}_{\Diffl}^2(\frakg,V)$}.
	
	Next we prove that isomorphic abelian extensions give rise to the same element in  {$\tilde{\rm{H}}_{\Diffl}^2(\frakg,V)$.} Assume that $(\hat{\frakg}_1,\dd_{\hat{\frakg}_1})$ and $(\hat{\frakg}_2,\dd_{\hat{\frakg}_2})$ are two isomorphic abelian extensions of $(\frakg,\dd_\frakg)$ by $(V,\dd_V)$ with the associated homomorphism $\zeta:(\hat{\frakg}_1,\dd_{\hat{\frakg}_1})\rar (\hat{\frakg}_2,\dd_{\hat{\frakg}_2})$. Let $s_1$ be a section of $(\hat{\frakg}_1,\dd_{\hat{\frakg}_1})$. As $p_2\circ\zeta=p_1$, we have
	\[p_2\circ(\zeta\circ s_1)=p_1\circ s_1=\Id_{\frakg}.\]
	Therefore, $\zeta\circ s_1$ is a section of $(\hat{\frakg}_2,\dd_{\hat{\frakg}_2})$. Denote $s_2:=\zeta\circ s_1$. Since $\zeta$ is a homomorphism of differential Lie algebras such that $\zeta|_V=\Id_V$, we have
	\begin{align*}
		\psi_2(x,y)&=[s_2(x), s_2(y)]-s_2([x, y])=[\zeta(s_1(x)), \zeta(s_1(y))]-\zeta(s_1([x, y]))\\
		&=\zeta([s_1(x), s_1(y)]-s_1([x, y]))=\zeta(\psi_1(x,y))\\
		&=\psi_1(x,y),
	\end{align*}
	and
	\begin{align*}
		\chi_2(x)&=\dd_{\hat{\frakg}_2}(s_2(x))-s_2(\dd_\frakg(x))=\dd_{\hat{\frakg}_2}(\zeta(s_1(x)))-\zeta(s_1(\dd_\frakg(x)))\\
		&=\zeta(\dd_{\hat{\frakg}_1}(s_1(x))-s_1(\dd_\frakg(x)))=\zeta(\chi_1(x))\\
		&=\chi_1(x).
	\end{align*}
	Consequently, all isomorphic abelian extensions give rise to the same element in {$\tilde{\rm{H}}_{\Diffl}^2(\frakg,V)$}.
	
	Conversely, given two 2-cocycles $(\psi_1,\chi_1)$ and $(\psi_2,\chi_2)$, we can construct two abelian extensions $(\frakg\oplus V,[\cdot, \cdot]_{\psi_1},\dd_{\chi_1})$ and  $(\frakg\oplus V,[\cdot, \cdot]_{\psi_2},\dd_{\chi_2})$ via equalities~\eqref{eq:mul} and \eqref{eq:dif}. If they represent the same cohomological class {in $\tilde{\rm{H}}_{\Diffl}^2(\frakg,V)$}, then there exists a linear map $\phi:\frakg\to V$ such that $$(\psi_1,\chi_1)=(\psi_2,\chi_2)+\partial^1_{\Diffl}(\phi).$$ Define $\zeta:\frakg\oplus V\rar \frakg\oplus V$ by
	\[\zeta(x,v):=x+\phi(x)+v.\]
	Then $\zeta$ is an isomorphism of these two abelian extensions.
\end{proof}

\begin{remark}
	In particular,   any vector space $V$ with linear endomorphism $\dd_V$  can serve as a trivial representation of $(\frakg,\dd_\frakg)$. In this situation,  central extensions  of $(\frakg,\dd_\frakg)$ by $(V,\dd_V)$  are classified by the second cohomology group ${\rm{H}}_{\Diffl}^2(\frakg,V)$ of $(\frakg,\dd_\frakg)$ with the coefficient in the trivial representation $(V,\dd_V)$. Note that for a trivial representation $(V,\dd_V)$, since $\partial^1_\DO v=0$ for all $v\in V$, we have $${\rm{H}}_{\Diffl}^2(\frakg,V)=\tilde{\rm{H}}_{\Diffl}^2(\frakg,V).$$
\end{remark}

\section{Deformations of differential Lie algebras}\label{sec:def}

In this section, we study formal deformations  of a differential Lie algebra of weight $\lambda$. In particular, we show that if the second cohomology group $\tilde{\rm{H}}^2_{\Diffl}(\frakg,\frakg)=0$, then the differential Lie algebra  $(\frakg,\dd_\frakg)$ is rigid.

Let $(\frakg,\dd_\frakg)$ be a  differential Lie algebra. Denote by $\mu_\frakg$ the multiplication of $\frakg$.
Consider the 1-parameterized family
$$\mu_t=\sum_{i=0}^{\infty} \mu_i t^i, \, \, \mu_i\in \rmC^2_{\Alg}(\frakg, \frakg),\quad
d_t=\sum_{i=0}^{\infty} d_i t^i, \, \, d_i\in \rmC^1_\diffo(\frakg, \frakg).$$

\begin{defn}
	A {\bf 1-parameter formal deformation} of a differential Lie algebra $(\frakg, \dd_\frakg)$ is a pair $(\mu_t, d_t)$,  which endows the $\bk[[t]]$-module $(\frakg[[t]], \mu_t, d_t)$ with a  differential Lie algebra structure such that $(\mu_0, d_0)=(\mu_\frakg, \dd_\frakg)$.
\end{defn}

The pair $(\mu_t, d_t)$ generates a 1-parameter formal deformation  of the differential Lie algebra $(\frakg, \dd_\frakg)$ if and only if for all $x, y, z\in \frakg$, the following equalities hold:
\begin{eqnarray}\label{equation: ass nonexpanded}
	0 &=&  \mu_t(x, \mu_t(y, z))+\mu_t(y, \mu_t(z, x))+\mu_t(z, \mu_t(x, y)),\\
	\label{equation: derivation nonexpanded}
	d_t(\mu_t(x, y))&=&\mu_t(d_t(x), y)+\mu_t(x, d_t(y))+\lambda \mu_t(d_t(x),  d_t(y)).
\end{eqnarray}
Expanding these equations and collecting coefficients of $t^n$, we see that Eqs.~\eqref{equation: ass nonexpanded} and \eqref{equation: derivation nonexpanded} are equivalent to the systems of equations: for each $n\geq 0$,
\begin{eqnarray}\label{equation: ass}
	0&=&\sum_{i=0}^n \mu_i(x, \mu_{n-i}(y, z))+\mu_i(y, \mu_{n-i}(z, x))+\mu_i(z, \mu_{n-i}(x, y)),\\
	\label{df}
	\sum_{k,l\geq0\atop k+l=n}d_l\mu_k(x,y)&=&\sum_{k,l\geq0\atop k+l=n}\left(\mu_k(d_l(x),y)+\mu_k(x,d_l(y))\right)+\lambda\sum_{k,l,m\geq0\atop k+l+m=n}\mu_k(d_l(x),d_m(y)).
\end{eqnarray}

\begin{remark}For $n=0$, Eq.~\eqref{equation: ass} is equivalent to the Jabobi identity of $\mu_\frakg$, and Eq.~\eqref{df} is equivalent to the fact that $\dd_\frakg$ is a $\lambda$-derivation. For $n=1$, Eq.~\eqref{equation: ass} has the form:
\begin{equation}\label{equation9:n=1}
	\begin{aligned}
		0&=\mu_\frakg(x,\mu_1(y,z))+\mu_\frakg(y,\mu_1(z,x))+\mu_\frakg(z,\mu_1(x,y))\\
	&\quad+\mu_1(x,\mu_\frakg(y,z))+\mu_1(y,\mu_\frakg(z,x))+\mu_1(z,\mu_\frakg(x,y)).	
 \end{aligned}
\end{equation}
And for $n=1$, Eq.~\eqref{df} has the form:
\begin{equation}\label{equation10:n=1}
	\begin{aligned}
		d_1\mu_\frakg(x,y)+\dd_\frakg\mu_1(x,y)
		&=\mu_1(d_\frakg(x),y)+\mu_1(x,\dd_\frakg(y))+ \mu_\frakg(d_1(x),y)+ \mu_\frakg(x,d_1(y))   \\
		&\quad+\lambda\mu_1(\dd_\frakg(x),\dd_\frakg(y))+ \lambda\mu_\frakg(d_1(x),\dd_\frakg(y))+ \lambda\mu_\frakg(\dd_\frakg(x),d_1(y)).
	\end{aligned}
\end{equation}
\end{remark}

\begin{prop}\label{prop:fddco}
	Let $(\frakg[[t]], \mu_t, d_t)$ be a $1$-parameter formal deformation of a differential Lie algebra $(\frakg,\dd_\frakg)$. Then $(\mu_1, d_1)$ is a 2-cocycle of the differential Lie algebra $(\frakg,\dd_\frakg)$ with the coefficient  in the adjoint representation $\frakg_\ad$.	
\end{prop}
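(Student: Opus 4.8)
The plan is to extract the $n=1$ equations from the deformation system and recognize them as exactly the $2$-cocycle conditions spelled out in Remark~\ref{ex:cocycle}, Eq.~\eqref{2-cocycle}, applied to the adjoint representation. First I would recall that a $2$-cochain of the differential Lie algebra with coefficients in $\frakg_{\ad}$ is a pair $(f,g)$ with $f\in \rmC^2_\alg(\frakg,\frakg)=\Hom(\wedge^2\frakg,\frakg)$ and $g\in \rmC^1_\DO(\frakg,\frakg)=\Hom(\frakg,\frakg)$, and that $\partial^2_{\Diffl}(f,g)=0$ decomposes as $\partial^2_\alg f=0$ together with the relation
$$\rho_\lambda(x)g(y)-\rho_\lambda(y)g(x)-g([x,y]) = -\lambda f(\dd_\frakg(x),\dd_\frakg(y))-f(\dd_\frakg(x),y)-f(x,\dd_\frakg(y))+\dd_V(f(x,y)),$$
where here $\rho=\ad$, $\rho_\lambda(x)=\ad(x+\lambda\dd_\frakg(x))=[\,x+\lambda\dd_\frakg(x),\,-\,]$, and $\dd_V=\dd_\frakg$. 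So there are two things to check: (i) $\partial^2_\alg\mu_1=0$, and (ii) the displayed relation with $(f,g)=(\mu_1,d_1)$.

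For (i), I would take Eq.~\eqref{equation9:n=1}, the $n=1$ instance of the Jacobi/associativity constraint, and observe that, since $\mu_\frakg$ satisfies the Jacobi identity, this equation is precisely the statement that $\mu_1$ is a $2$-cocycle in the Chevalley–Eilenberg complex of $\frakg$ with values in $\frakg_{\ad}$; this is the classical Gerstenhaber/Nijenhuis–Richardson computation for Lie algebras, and only requires rewriting the six terms of \eqref{equation9:n=1} in the form $\partial^2_\alg\mu_1(x,y,z)=0$ using the definition of $\partial_\alg$ with $\rho=\ad$. For (ii), I would start from Eq.~\eqref{equation10:n=1}, the $n=1$ instance of the derivation constraint, and rearrange: the left side contributes $d_1([x,y]) + \dd_\frakg(\mu_1(x,y))$ after identifying $\mu_\frakg(x,y)=[x,y]$ and $\dd_\frakg = \dd_V$ on $\frakg_{\ad}$; the right side contributes the six terms $\mu_1(\dd_\frakg x, y) + \mu_1(x,\dd_\frakg y) + [d_1 x, y] + [x, d_1 y] + \lambda\mu_1(\dd_\frakg x,\dd_\frakg y) + \lambda[\dd_\frakg x, d_1 y] + \lambda[\dd_\frakg x, d_1 y]$ — wait, more precisely $\lambda\mu_\frakg(d_1 x,\dd_\frakg y)+\lambda\mu_\frakg(\dd_\frakg x, d_1 y)$. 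Grouping the $d_1$-terms, $[d_1 x, y] + [x, d_1 y] + \lambda[d_1 x,\dd_\frakg y] + \lambda[\dd_\frakg x, d_1 y] - d_1([x,y])$, and using antisymmetry of the bracket, one recognizes $[x+\lambda\dd_\frakg x, d_1 y] - [y+\lambda\dd_\frakg y, d_1 x] - d_1([x,y]) = \rho_\lambda(x)d_1(y) - \rho_\lambda(y)d_1(x) - d_1([x,y])$; the remaining $\mu_1$-terms rearrange into $\dd_\frakg(\mu_1(x,y)) - \mu_1(\dd_\frakg x, y) - \mu_1(x,\dd_\frakg y) - \lambda\mu_1(\dd_\frakg x,\dd_\frakg y)$, which is exactly the right-hand side of the displayed relation with $f=\mu_1$. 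Hence $(\mu_1,d_1)$ satisfies \eqref{2-cocycle}.

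The only genuine subtlety — the "main obstacle" — is bookkeeping of signs and of the twist $\rho\mapsto\rho_\lambda$: one must be careful that the coefficient module in the $d_1$-part of the cocycle condition is the twisted representation $\frakg_\ad{}_\lambda = (\frakg,\ad_\lambda,\dd_\frakg)$ rather than $\frakg_\ad$ itself, so that the terms $\lambda[d_1 x,\dd_\frakg y]$ and $\lambda[\dd_\frakg x, d_1 y]$ coming from the weight-$\lambda$ part of the derivation rule \eqref{equation10:n=1} are absorbed into $\rho_\lambda$ correctly, with the antisymmetry of $\mu_\frakg$ producing the right relative sign between the $x$- and $y$-slots. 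Once this matching is set up, both (i) and (ii) are routine rearrangements, and the conclusion $\partial^2_{\Diffl}(\mu_1,d_1)=0$ follows immediately from Remark~\ref{ex:cocycle}. I would present the proof as: "write out \eqref{equation9:n=1} and \eqref{equation10:n=1}, rearrange into the form of Remark~\ref{ex:cocycle}," with the sign/twist verification as the one step deserving explicit display.
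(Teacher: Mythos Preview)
Your proposal is correct and follows essentially the same route as the paper: identify Eq.~\eqref{equation9:n=1} with $\partial^2_\alg\mu_1=0$ and rearrange Eq.~\eqref{equation10:n=1} into the $2$-cocycle relation~\eqref{2-cocycle} (equivalently $\partial^1_\DO d_1+\delta^2\mu_1=0$), then invoke Remark~\ref{ex:cocycle}. The paper's proof is simply the terse two-line version of what you wrote out; your explicit regrouping of the $\lambda$-terms into $\rho_\lambda$ is exactly the content behind the paper's assertion that \eqref{equation10:n=1} is equivalent to \eqref{2-cocycle}.
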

\begin{proof}
	For $n =1$, Eq.~\eqref{equation9:n=1} is equivalent to $\partial_\alg^2 \mu_1=0$, and Eq.~\eqref{equation10:n=1} is equivalent to Eq.~\eqref{2-cocycle}, that is $$\partial^1_\DO d_1+\delta^2 \mu_1=0.$$
	Thus  $(\mu_1,d_1)$ is a 2-cocycle by Remark~\ref{ex:cocycle}.	
\end{proof}

If $\mu_t=\mu_\frakg$ in the above $1$-parameter formal deformation of the differential  Lie algebra $(\frakg,\dd_\frakg)$, we obtain a $1$-parameter formal deformation of the differential operator $\dd_\frakg$. Consequently, we have

\begin{coro}\label{coro:fdo}
	Let $ d_t $ be a $1$-parameter formal deformation of the differential operator  $ \dd_\frakg $.  Then $d_1$ is a 1-cocycle of the differential operator  $\dd_\frakg$ with coefficients  in the adjoint representation $\frakg_\ad$.		
\end{coro}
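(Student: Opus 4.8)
The plan is to extract Corollary~\ref{coro:fdo} as the specialisation of Proposition~\ref{prop:fddco} to the case of a constant deformation of the multiplication. Concretely, suppose $d_t=\sum_{i\geq0}d_it^i$ is a $1$-parameter formal deformation of the differential operator $\dd_\frakg$, meaning that $(\frakg[[t]],\mu_\frakg,d_t)$ is a differential Lie algebra over $\bk[[t]]$ with $d_0=\dd_\frakg$. Then setting $\mu_t:=\mu_\frakg$ (the constant family, so $\mu_0=\mu_\frakg$ and $\mu_i=0$ for $i\geq1$), the pair $(\mu_t,d_t)$ is visibly a $1$-parameter formal deformation of the differential Lie algebra $(\frakg,\dd_\frakg)$ in the sense of the definition above: Eq.~\eqref{equation: ass nonexpanded} holds because it is just the Jacobi identity for $\mu_\frakg$, and Eq.~\eqref{equation: derivation nonexpanded} holds by hypothesis on $d_t$.

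First I would invoke Proposition~\ref{prop:fddco} for this pair, which tells us that $(\mu_1,d_1)=(0,d_1)$ is a $2$-cocycle of the differential Lie algebra $(\frakg,\dd_\frakg)$ with coefficients in $\frakg_\ad$, i.e.\ $\partial_{\Diffl}^2(0,d_1)=0$. Unwinding the definition of $\partial_{\Diffl}^2$ from Definition~\ref{def: cochain complex for differential Lie algebras}, this says $\partial_\alg^2(0)=0$ (automatic) and $-\partial_\DO^1 d_1-\delta^2(0)=0$; since $\delta^2$ applied to the zero cochain is zero, this reduces to $\partial_\DO^1 d_1=0$. Hence $d_1$ is a $1$-cocycle in the cochain complex $(\rmC^*_\diffo(\frakg,\frakg),\partial_\DO^*)$ of the differential operator $\dd_\frakg$ with coefficients in the adjoint representation, which is exactly the assertion.

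Alternatively, and more directly, one can bypass Proposition~\ref{prop:fddco}: take $n=1$ in Eq.~\eqref{df} with all $\mu_i=0$ for $i\geq1$, giving $d_1\mu_\frakg(x,y)+\dd_\frakg\mu_\frakg$-terms\ldots; more precisely Eq.~\eqref{df} at $n=1$ with $\mu_1=0$ collapses to
\[
d_1([x,y])=[d_1(x),y]+[x,d_1(y)]+\lambda[d_1(x),\dd_\frakg(y)]+\lambda[\dd_\frakg(x),d_1(y)],
\]
which, after rewriting $[x,d_1(y)]+\lambda[\dd_\frakg(x),d_1(y)]=\rho_\lambda(x)d_1(y)$ and similarly for the other term (using the description of $\rho_\lambda$ from Lemma~\ref{lem:bmd} with $\rho=\ad$), is precisely the equation $\partial_\DO^1 d_1=0$ read off from the formula for $\partial_\DO^n$ in Subsection~\ref{subsec:cohomologydo}.

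There is essentially no obstacle here: the only thing to be careful about is the sign and index bookkeeping in identifying the specialised deformation equation with the coboundary $\partial_\DO^1 d_1$, and in checking that $\delta^2$ vanishes on the zero cochain (immediate from its defining formula, since every term is linear in the argument $f$). I expect to present the short deduction from Proposition~\ref{prop:fddco} as the main argument, since all the heavy lifting — in particular the compatibility of $\partial_\DO$ with $\rho_\lambda$ and the cochain map property of $\delta$ — has already been done.
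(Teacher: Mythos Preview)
Your proposal is correct and essentially matches the paper: the paper's proof is precisely your ``alternative'' direct argument, reading Eq.~\eqref{df} at $n=1$ with $\mu_i=0$ for $i\geq1$ and identifying the resulting identity with $\partial_\DO^1 d_1=0$. Your primary route through Proposition~\ref{prop:fddco} is equally valid and just packages the same computation one level up.
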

\begin{proof}
	In the special case when $n =1$, Eq.~\eqref{df} is equivalent to $\partial^1_\DO d_1=0$, which implies that  $d_1$ is a 1-cocycle of the differential operator  $\dd_\frakg$ with coefficients in the adjoint representation $\frakg_\ad$.	
\end{proof}

\begin{defn}
	The $2$-cocycle $(\mu_1,d_1)$ is called the {\bf infinitesimal} of the $1$-parameter formal deformation $(\frakg[[t]],\mu_t,d_t)$ of $(\frakg,\dd_\frakg)$.
\end{defn}

\begin{defn}
	Let $(\frakg[[t]],\mu_t,d_t)$ and $(\frakg[[t]],\bar{\mu}_t,\bar{d}_t)$ be $1$-parameter formal deformations of $(\frakg,\dd_\frakg)$. A
	{\bf formal isomorphism} from $(\frakg[[t]],\bar{\mu}_t,\bar{d}_t)$ to $(\frakg[[t]],\mu_t,d_t)$ is a power series $\Phi_t=\sum_{i\geq0}\phi_it^i:\frakg[[t]]\lon \frakg[[t]]$, where $\phi_i:\frakg\to \frakg$ are linear maps with $\phi_0=\Id_\frakg$, such that
	\begin{eqnarray*}
		\Phi_t\circ\bar{\mu}_t&=& \mu_t\circ(\Phi_t\times\Phi_t),\\
		\Phi_t\circ\bar{d}_t&=&d_t\circ\Phi_t.
	\end{eqnarray*}

	Two $1$-parameter formal deformations $(\frakg[[t]],\mu_t,d_t)$ and $(\frakg[[t]],\bar{\mu}_t,\bar{d}_t)$ are said to be {\bf equivalent} if  there exists a formal isomorphism $\Phi_t:(\frakg[[t]],\bar{\mu}_t,\bar{d}_t) \lon (\frakg[[t]],\mu_t,d_t)$.
\end{defn}

\begin{theorem}
	The infinitesimals of two equivalent $1$-parameter formal deformations of $(\frakg,\dd_\frakg)$ are in the same cohomology class {in $\tilde{\rm{H}}_{\Diffl}^2(\frakg,\frakg)$}.
\end{theorem}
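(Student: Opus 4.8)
The plan is to write out the two defining identities of a formal isomorphism $\Phi_t=\sum_{i\ge 0}\phi_i t^i$, expand them, and collect the coefficient of $t^1$; the resulting relations will say exactly that the difference of the two infinitesimals is $\partial^1_{\Diffl}(\phi_1)$. Concretely, suppose $\Phi_t\colon(\frakg[[t]],\bar\mu_t,\bar d_t)\lon(\frakg[[t]],\mu_t,d_t)$ is a formal isomorphism with $\phi_0=\Id_\frakg$. From $\Phi_t\circ\bar\mu_t=\mu_t\circ(\Phi_t\times\Phi_t)$, the degree-zero term is automatic ($\mu_0=\bar\mu_0=\mu_\frakg$), and comparing coefficients of $t$ gives, for all $x,y\in\frakg$,
\[
\phi_1(\mu_\frakg(x,y))+\bar\mu_1(x,y)=\mu_1(x,y)+\mu_\frakg(\phi_1(x),y)+\mu_\frakg(x,\phi_1(y)),
\]
i.e. $\bar\mu_1-\mu_1=\mu_\frakg(\phi_1(\cdot),\cdot)+\mu_\frakg(\cdot,\phi_1(\cdot))-\phi_1(\mu_\frakg(\cdot,\cdot))=\partial^1_\alg(\phi_1)$, where the last equality is the formula for the Chevalley--Eilenberg differential on $\rmC^1_\alg(\frakg,\frakg_\ad)$.

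Similarly, from $\Phi_t\circ\bar d_t=d_t\circ\Phi_t$ the degree-zero term is automatic ($d_0=\bar d_0=\dd_\frakg$), and the coefficient of $t$ yields, for all $x\in\frakg$,
\[
\phi_1(\dd_\frakg(x))+\bar d_1(x)=d_1(x)+\dd_\frakg(\phi_1(x)),
\]
so that $\bar d_1-d_1=\dd_\frakg\circ\phi_1-\phi_1\circ\dd_\frakg=-\bigl(\phi_1\circ\dd_\frakg-\dd_\frakg\circ\phi_1\bigr)=-\delta^1(\phi_1)$, recalling that on $\rmC^1_\alg(\frakg,\frakg)=\rmC^0_\DO(\frakg,\frakg)$-level cochains (degree $1$) the map $\delta^1$ sends $\phi_1\mapsto \phi_1\circ\dd_\frakg-\dd_\frakg\circ\phi_1$ (the $\lambda$-terms involve only $\delta^n$ for $n\geq 2$, so at $n=1$ only the $\dd_\frakg$ and $\dd_V$ contributions survive). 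Combining the two displays with the definition of $\partial^1_{\Diffl}$ from Definition~\ref{def: cochain complex for differential Lie algebras}, namely $\partial^1_{\Diffl}(\phi_1)=(\partial^1_\alg\phi_1,\,-\delta^1\phi_1)$ (there is no $\partial^0_\DO$ contribution since $\phi_1$ lives in the $\rmC^1_\alg$ summand with trivial $\rmC^0_\DO$ component), we obtain
\[
(\bar\mu_1,\bar d_1)-(\mu_1,d_1)=\bigl(\partial^1_\alg\phi_1,\,-\delta^1\phi_1\bigr)=\partial^1_{\Diffl}(\phi_1),
\]
which shows the two infinitesimals are cohomologous in $\tilde{\rm H}^2_{\Diffl}(\frakg,\frakg)$ (note $\phi_1\in\tilde{\rm C}^1_{\Diffl}(\frakg,\frakg)=\rmC^1_\alg(\frakg,\frakg)$, so the class lies in the reduced group).

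The computation itself is routine bookkeeping of $t$-coefficients; the only genuine point requiring care is matching signs and the precise form of $\delta^1$ and $\partial^1_{\Diffl}$ against the conventions fixed earlier, in particular making sure the weight-$\lambda$ terms in $\delta$ do not intervene at the first order and that the mapping-cone sign in $\partial^*_{\Diffl}$ is reproduced correctly. I expect this sign-tracking to be the main (minor) obstacle; everything else follows directly by expanding the two compatibility equations for $\Phi_t$ and reading off the linear-in-$t$ part.
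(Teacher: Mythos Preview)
Your proposal is correct and follows essentially the same approach as the paper: expand the two compatibility equations for $\Phi_t$, read off the coefficient of $t$, and identify the resulting relations as $(\bar\mu_1,\bar d_1)=(\mu_1,d_1)+\partial^1_{\Diffl}(\phi_1)$. Your sign-checking against the definitions of $\delta^1$ and $\partial^1_{\Diffl}$ is accurate, and the paper's proof is precisely this computation with no additional ingredients.
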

\begin{proof}
	Let $\Phi_t:(\frakg[[t]],\bar{\mu}_t,\bar{d}_t)\lon(\frakg[[t]],\mu_t,d_t)$ be a formal isomorphism. For all $x,y\in \frakg$, we have
	\begin{eqnarray*}
		\Phi_t\circ\bar{\mu}_t(x,y)&=& \mu_t\circ(\Phi_t\times\Phi_t)(x,y),\\
		\Phi_t\circ\bar{d}_t(x)&=& d_t\circ\Phi_t (x).
	\end{eqnarray*}
	Expanding the above identities and comparing the coefficients of $t$, we obtain
	\begin{eqnarray*}
		\bar{\mu}_1(x,y)&=&\mu_1(x,y)+[\phi_1(x), y]+[x, \phi_1(y)]-\phi_1([x, y]),\\
		\bar{d}_1(x)&=&d_1(x)+\dd_\frakg(\phi_1(x))-\phi_1(\dd_\frakg(x)).
	\end{eqnarray*}
	Thus, we have $$(\bar{\mu}_1,\bar{d}_1)=(\mu_1,d_1)+\partial_{\Diffl}^1(\phi_1),$$ which implies that $[(\bar{\mu}_1,\bar{d}_1)]=[(\mu_1,d_1)]$ in $\tilde{\rm{H}}^2_{\Diffl}(\frakg,\frakg)$.
\end{proof}

Given any differential Lie algebra $(\frakg,\dd_\frakg)$, interpret
$\mu_\frakg$ and $\dd_\frakg$ as the formal power series
$\mu_t$ and $d_t$ with $\mu_i=\delta_{i,0}\mu_\frakg$ and $d_i=\delta_{i,0}\dd_\frakg$ respectively for all $i\geq0$, where $\delta_{i,0}$ is the Kronecker sign. Then $(\frakg[[t]],\mu_\frakg,\dd_\frakg)$ is a 1-parameter formal deformation of $(\frakg, \dd_\frakg)$.
\begin{defn}
	A $1$-parameter formal deformation $(\frakg[[t]],\mu_t,d_t)$ of $(\frakg,\dd_\frakg)$ is said to be {\bf trivial} if it is equivalent to the  deformation $(\frakg[[t]],\mu_\frakg,\dd_\frakg)$, that is, there exists  $\Phi_t=\sum_{i\geq0}\phi_it^i:\frakg[[t]]\lon \frakg[[t]]$, where  $\phi_i:\frakg\to \frakg$ are linear maps with $\phi_0=\Id_\frakg$, such that
	\begin{eqnarray*}
		\Phi_t\circ\mu_t&=&\mu_\frakg\circ(\Phi_t\times\Phi_t),\\
		\Phi_t \circ d_t&=&\dd_\frakg\circ\Phi_t.
	\end{eqnarray*}
\end{defn}

\begin{defn}
	A differential Lie algebra $(\frakg,\dd_\frakg)$ is said to be {\bf rigid} if every $1$-parameter formal deformation  is trivial.
\end{defn}

\begin{theorem}
	If $\tilde{\rm{H}}^2_{\Diffl}(\frakg,\frakg_\ad)=0$, then the differential Lie algebra  $(\frakg,\dd_\frakg)$ is rigid.
\end{theorem}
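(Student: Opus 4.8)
The plan is to run the classical Gerstenhaber-style rigidity argument: given an arbitrary $1$-parameter formal deformation $(\frakg[[t]],\mu_t,d_t)$ of $(\frakg,\dd_\frakg)$, inductively remove the lowest-order term in which it differs from the trivial deformation $(\frakg[[t]],\mu_\frakg,\dd_\frakg)$, using the vanishing of $\tilde{\rm{H}}^2_{\Diffl}(\frakg,\frakg_\ad)$ to make each such term absorbable by a formal isomorphism. If $(\mu_t,d_t)=(\mu_\frakg,\dd_\frakg)$ there is nothing to prove; otherwise let $N\geq 1$ be minimal with $(\mu_N,d_N)\neq(0,0)$, so that $\mu_i=0$ and $d_i=0$ for $1\leq i<N$.

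The first step is to show that $(\mu_N,d_N)$ is a $2$-cocycle lying in the truncated subcomplex. Extracting the coefficient of $t^N$ in the deformation equations \eqref{equation: ass} and \eqref{df}, every summand involving some $\mu_i$ or $d_i$ with $1\leq i\leq N-1$ drops out, so \eqref{equation: ass} reduces to $\partial^2_\alg\mu_N=0$ and \eqref{df} reduces precisely to the relation \eqref{2-cocycle} with $f=\mu_N$, $g=d_N$ and $V=\frakg_\ad$ (the $\rho_\lambda$-twist and the weight-$\lambda$ contributions appearing exactly as in Remark~\ref{ex:cocycle}); this is the computation of Proposition~\ref{prop:fddco}, now carried out at order $N$ rather than order $1$. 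Hence $(\mu_N,d_N)$ is a $2$-cocycle, and since it belongs to $\tilde{\rmC}^2_{\Diffl}(\frakg,\frakg_\ad)$, the hypothesis $\tilde{\rm{H}}^2_{\Diffl}(\frakg,\frakg_\ad)=0$ produces a \emph{linear map} $\phi_N\colon\frakg\rar\frakg=\tilde{\rmC}^1_{\Diffl}(\frakg,\frakg_\ad)$ with $(\mu_N,d_N)=\partial^1_{\Diffl}(\phi_N)=(\partial^1_\alg\phi_N,-\delta^1\phi_N)$.

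Next I would set $\Phi^{(N)}_t:=\Id_\frakg-\phi_N t^N$, which is invertible over $\bk[[t]]$, and define $(\bar\mu_t,\bar d_t)$ by $\Phi^{(N)}_t\circ\bar\mu_t=\mu_t\circ(\Phi^{(N)}_t\times\Phi^{(N)}_t)$ and $\Phi^{(N)}_t\circ\bar d_t=d_t\circ\Phi^{(N)}_t$; these equations determine $\bar\mu_t,\bar d_t$ uniquely as power series and make $(\frakg[[t]],\bar\mu_t,\bar d_t)$ a formal deformation equivalent to $(\frakg[[t]],\mu_t,d_t)$. Comparing coefficients of $t^i$ for $i\leq N$ exactly as in the proof that equivalent deformations have cohomologous infinitesimals gives $\bar\mu_i=\mu_i$ and $\bar d_i=d_i$ for $i<N$, while $\bar\mu_N=\mu_N-\partial^1_\alg\phi_N=0$ and $\bar d_N=d_N+\delta^1\phi_N=0$. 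Thus the new deformation agrees with the trivial one through order $N$. Iterating, one obtains formal isomorphisms $\Phi^{(N)}_t,\Phi^{(N+1)}_t,\dots$ with $\Phi^{(k)}_t\equiv\Id_\frakg \pmod{t^k}$, and the infinite composite $\Phi_t:=\Phi^{(N)}_t\circ\Phi^{(N+1)}_t\circ\cdots$ converges in the $t$-adic topology to a formal isomorphism carrying $(\mu_\frakg,\dd_\frakg)$ to $(\mu_t,d_t)$; hence $(\mu_t,d_t)$ is trivial, and $(\frakg,\dd_\frakg)$ is rigid.

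The step I expect to be the main obstacle is the first one: verifying that the order-$N$ component of the two deformation equations is \emph{exactly} the pair-cocycle condition of Remark~\ref{ex:cocycle} for the adjoint representation, with all the $\lambda$-weighted terms and the $\rho_\lambda$-twist matching up, and checking that the coboundary $\phi_N$ genuinely lands in the truncated subcomplex $\tilde{\rmC}^\bullet_{\Diffl}(\frakg,\frakg_\ad)$ — which is precisely why the hypothesis is phrased with $\tilde{\rm{H}}^2$ rather than the full $\rm{H}^2$, since $\rmC^1_{\Diffl}(\frakg,\frakg)=\Hom(\frakg,\frakg)\oplus\frakg$ is strictly larger than the space $\Hom(\frakg,\frakg)$ in which $\phi_N$ must live. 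The uniqueness of $(\bar\mu_t,\bar d_t)$ and the convergence of the infinite composition are routine $t$-adic bookkeeping.
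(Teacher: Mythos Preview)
Your proposal is correct and follows essentially the same Gerstenhaber-style induction as the paper's proof: the paper kills the $t^1$-term via $\Phi_t=\Id_\frakg+\phi_1 t$ (with the sign convention $(\mu_1,d_1)=-\partial^1_{\Diffl}\phi_1$) and then says ``by repeating the argument'', while you start directly at the lowest nontrivial order $N$, use $\Phi^{(N)}_t=\Id_\frakg-\phi_N t^N$, and spell out the $t$-adic convergence of the infinite composite. Your observation about why $\tilde{\rm H}^2$ rather than ${\rm H}^2$ is needed (so that the coboundary $\phi_N$ lives in $\Hom(\frakg,\frakg)=\tilde{\rmC}^1_{\Diffl}$ and not in the larger $\Hom(\frakg,\frakg)\oplus\frakg$) is exactly the point, and the paper implicitly uses this when it writes ``there exists a 1-cochain $\phi_1\in\rmC^1_\alg(\frakg,\frakg)$''.
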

\begin{proof}
	Let $(\frakg[[t]],\mu_t,d_t)$ be a $1$-parameter formal deformation of $(\frakg,\dd_\frakg)$. By Proposition ~\ref{prop:fddco},   $(\mu_1,d_1)$ is a 2-cocycle. By $\tilde{\rm{H}}^2_{\Diffl}(\frakg,\frakg_\ad)=0$, there exists a 1-cochain $\phi_1\in  \rmC^1_\alg(\frakg,\frakg)$ such that
	\begin{eqnarray}
		\label{rigid}(\mu_1,d_1)=-\partial_{\Diffl}^1(\phi_1).
	\end{eqnarray}
	Then setting $\Phi_t=\Id_\frakg+\phi_1 t$, we have a 1-parameter formal deformation $(\frakg[[t]],\bar{\mu}_t,\bar{d}_t)$, where
	\begin{eqnarray*}
		\bar{\mu}_t(x,y)&=&\big(\Phi_t^{-1}\circ \mu_t\circ(\Phi_t\times\Phi_t)\big)(x,y),\\
		\bar{d}_t(x)&=&\big(\Phi_t^{-1}\circ d_t\circ\Phi_t\big)(x).
	\end{eqnarray*}
	Thus, $(\frakg[[t]],\bar{\mu}_t,\bar{d}_t)$ is equivalent to $(\frakg[[t]],\mu_t,d_t)$. Moreover, we have
	\begin{eqnarray*}
		\bar{\mu}_t(x,y)&=&(\Id_\frakg-\phi_1t+\phi_1^2t^{2}+\cdots+(-1)^i\phi_1^it^{i}+\cdots)(\mu_t(x+\phi_1(x)t,y+\phi_1(y)t)),\\
		\bar{d}_t(x)&=&(\Id_\frakg-\phi_1t+\phi_1^2t^{2}+\cdots+(-1)^i\phi_1^it^{i}+\cdots)(d_t(x+\phi_1(x)t)).
	\end{eqnarray*}
	Therefore,
	\begin{eqnarray*}
		\bar{\mu}_t(x,y)&=&[x, y]+(\mu_1(x,y)+[x, \phi_1(y)]+[\phi_1(x), y]-\phi_1([x, y]))t+\bar{\mu}_{2}(x,y)t^{2}+\cdots,\\
		\bar{d}_t(x)&=&d_A(x)+(d_A(\phi_1(x))+d_1(x)-\phi_1(\dd_\frakg(x)))t+\bar{d}_{2}(x)t^{2}+\cdots.
	\end{eqnarray*}
	By Eq.~\eqref{rigid}, we have
	\begin{eqnarray*}
		\bar{\mu}_t(x,y)&=&[x, y]+\bar{\mu}_{2}(x,y)t^{2}+\cdots,\\
		\bar{d}_t(x)&=&\dd_\frakg(x)+\bar{d}_{2}(x)t^{2}+\cdots.
	\end{eqnarray*}
	Then by repeating the argument, we can show that $(\frakg[[t]],\mu_t,d_t)$ is equivalent to $(\frakg[[t]],\mu_\frakg,\dd_\frakg)$. Thus, $(\frakg,\dd_\frakg)$ is rigid.
\end{proof}
\bigskip

\bigskip
\section{$L_\infty[1]$-structure for (relative and absolute) differential Lie algebras}\label{sec:L_infty}

In this section, we study  the $L_\infty[1]$-structure for   differential Lie algebras of weight $\lambda$.
In order to deal with absolute differential Lie algebras, we introduce a generalised version of derived bracket technique; moreover, to consider  the weight  case, we  incorporate the weight $\lambda$ into the statements.  These are the main differences of this paper with \cite{PSTZ21, JS23}, since the latter papers consider   difference operators, say, relative differential operators of weight $1$.

\medskip

\subsection{$L_\infty[1]$-algebras}\ \label{Subsect: Linfinity algebras}

In this subsection, we  recall some preliminaries on $L_\infty[1]$-algebras.

\begin{defn}\label{Def:L[1]-infty}
	An \textbf{$L_\infty[1]$-algebra} is a graded vector space  $\frakL=\bigoplus\limits_{i\in\mathbb{N}}\frakL^i$   endowed with   a family of graded linear maps $l_n:\frakL^{\ot n}\rightarrow \frakL, n\geq 1$ of degree $1$ satisfying  the following equations:
	for arbitrary  $n\geq 1$,  $ \sigma\in S_n$ and $x_1,\dots, x_n\in \frakL$,
	\begin{enumerate}
		\item[(i)](graded symmetry)
\begin{equation*} \label{graded sym}
l_n(x_{\sigma(1)},\dots,x_{\sigma(n)})=\varepsilon(\sigma)l_n(x_1,\dots,x_n),
\end{equation*}

		\item[(ii)](generalised Jacobi identity)
		\begin{equation*}\label{graded Jacobi}
\sum_{i=1}^n\sum_{\sigma\in \Sh(i,n-i)}\varepsilon(\sigma)l_{n-i+1}(l_i(x_{\sigma(1)},\dots,x_{\sigma(i)}),x_{\sigma(i+1)},\dots,x_{\sigma(n)})=0.
\end{equation*}
	\end{enumerate}
\end{defn}

\begin{remark} \label{Rem: L[1]-infinity for small n}   Let us consider the generalised Jacobi identity for   $n\leq 3$ with the assumption of  generalised  symmetry.
	
	\begin{enumerate}
		\item[(i)]  For $n=1$,  then $l_1 \circ l_1 =0$, that is,  $l_1 $ is a differential.

		\item[(ii)] For $n=2$, then $l_1\circ l_2 +l_2 \circ (l_1 \ot\Id+\Id\ot l_1)=0$, that is , $l_1$ is a derivation with respect to $l_2$.

		\item[(iii)] For $n=3$ and arbitrary homogeneous elements $x_1, x_2, x_3\in L$, we have
		$$\begin{array}{ll} &l_2 (l_2 (x_1\ot x_2)\ot x_3)+(-1)^{|x_1|(|x_2|+|x_3|)} l_2 (l_2 (x_2\ot x_3)\ot x_1)+
			(-1)^{|x_3|(|x_1|+|x_2|)} l_2 (l_2 (x_3\ot x_1)\ot x_2)
			\\
			=&-\Big(l_1 (l_3 (x_1\ot x_2\ot x_3))+ l_3 (l_1  (x_1)\ot x_2\ot x_3 )+(-1)^{|x_1|} l_3 (x_1\ot l_1  (x_2)\ot x_3 )+\\
			&(-1)^{|x_1|+|x_2|} l_3 (x_1\ot x_2\ot l_1  (x_3) )\Big),\end{array}$$
		 that is, $l_2$ satisfies the   Jacobi identity up to homotopy.
	\end{enumerate}
	
	
\end{remark}



\begin{defn}
A \textbf{Maurer-Cartan element} of an $L_\infty[1]$-algebra $(\frakL,\{l_n \}_{n\geq1})$ is  an element $\alpha\in \frakL^{0}$   satisfying the Maurer-Cartan equation:
	\begin{eqnarray*}\label{Eq: mc-equation[1]}\sum_{n=1}^\infty\frac{1}{n!} l_n (\alpha^{\ot n})=0,\end{eqnarray*}
	whenever this infinite sum exists.  Denote $\mathcal{MC}(\frakL):=\{\mbox{Maurer-Cartan elements of}~ \frakL\}$.
\end{defn}
%

\begin{prop}\cite[Twisting procedure] {Get09} \label{Prop: twist-L-infty[1]}
Let  $\alpha$ be a Maurer-Cartan element of $L_\infty[1]$-algebra $\frakL$,  	The twisted $L_\infty[1]$-algebra  is given by $l_n ^{\alpha}: \frakL^{\ot n}\rightarrow \frakL$ which is defined as follows$\colon$
	\begin{eqnarray*}\label{Eq: twisted L[1] infinity algebra} l^\alpha_n(x_1\ot \cdots\ot x_n)=\sum_{i=0}^\infty\frac{1}{i!}l_{n+i} (\alpha^{\ot i}\ot x_1\ot \cdots\ot x_n),\ \forall x_1, \dots, x_n\in \frakL,\end{eqnarray*}
	whenever these infinite sums exist.
\end{prop}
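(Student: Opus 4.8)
The plan is to verify the two axioms of Definition~\ref{Def:L[1]-infty} directly for the family $\{l^\alpha_n\}_{n\ge 1}$, working throughout under the standing assumption that the displayed infinite sums converge (this is automatic, e.g., whenever only finitely many of the $l_n$ are nonzero, which is the situation in all the applications in this paper, so the rearrangements of series made below are legitimate). First I would dispose of graded symmetry, which is immediate: since $\alpha\in\frakL^{0}$ has even degree, inserting copies of $\alpha$ produces no Koszul sign, hence $l^\alpha_n(x_{\sigma(1)},\dots,x_{\sigma(n)})=\sum_{i\ge 0}\frac1{i!}l_{n+i}(\alpha^{\ot i}\ot x_{\sigma(1)}\ot\cdots\ot x_{\sigma(n)})=\varepsilon(\sigma)\,l^\alpha_n(x_1,\dots,x_n)$ by graded symmetry of each $l_{n+i}$.

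For the generalised Jacobi identity the idea is to bootstrap from the identities already valid for the untwisted brackets. Fix $x_1,\dots,x_n\in\frakL$. For each $N\ge 0$ I would apply the generalised Jacobi identity of $\frakL$ to the $N+n$ arguments $\alpha^{\ot N}\ot x_1\ot\cdots\ot x_n$, obtaining a relation $E_N=0$, and then consider $\sum_{N\ge 0}\frac1{N!}E_N=0$. The left-hand side is a sum of terms of the shape $\pm\,l_{p+1}\bigl(l_q(\mathrm{inner}),\mathrm{outer}\bigr)$ in which the $N$ copies of $\alpha$ and the $n$ variables $x_i$ are split between an inner and an outer bracket; I would reorganise this sum according to whether the inner bracket $l_q$ receives at least one $x_i$ or only copies of $\alpha$. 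In the first case, if the inner bracket receives the variables indexed by a nonempty $S\subseteq\{1,\dots,n\}$ (in their natural order, with shuffle sign $\varepsilon(\sigma_S)$) together with $a$ copies of $\alpha$, while the outer bracket receives the complementary variables together with $b=N-a$ copies of $\alpha$, then --- because $|\alpha|$ is even all Koszul signs collapse to $\varepsilon(\sigma_S)$, and because the $N$ copies of $\alpha$ are indistinguishable the combined coefficient is $\frac1{N!}\binom{N}{a}=\frac1{a!\,b!}$ --- summing over all $N$ repackages these contributions precisely into $\sum_{k=1}^{n}\sum_{\sigma\in\Sh(k,n-k)}\varepsilon(\sigma)\,l^\alpha_{n-k+1}\bigl(l^\alpha_k(x_{\sigma(1)},\dots,x_{\sigma(k)}),x_{\sigma(k+1)},\dots,x_{\sigma(n)}\bigr)$, using $l^\alpha_k(x_S)=\sum_{a\ge 0}\frac1{a!}l_{k+a}(\alpha^{\ot a}\ot x_S)$ and the analogous expansion for the outer bracket. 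In the second case, if the inner bracket receives $c\ge 1$ copies of $\alpha$ only, the same $\frac1{N!}\binom{N}{c}=\frac1{c!\,(N-c)!}$ bookkeeping collects these terms into $\sum_{b\ge 0}\frac1{b!}\,l_{n+b+1}\bigl(\bigl[\sum_{c\ge 1}\frac1{c!}l_c(\alpha^{\ot c})\bigr]\ot\alpha^{\ot b}\ot x_1\ot\cdots\ot x_n\bigr)$, which vanishes by the Maurer--Cartan equation $\sum_{c\ge 1}\frac1{c!}l_c(\alpha^{\ot c})=0$ together with multilinearity. Hence the twisted generalised Jacobi expression equals $\sum_{N\ge 0}\frac1{N!}E_N=0$, as required.

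I expect the main obstacle to be precisely the sign-and-combinatorics bookkeeping of the previous paragraph: one must check carefully that the Koszul signs in the generalised Jacobi identities for $l_{N+n}$ degenerate, using $|\alpha|$ even, to the shuffle signs $\varepsilon(\sigma)$ of the $x$-variables alone, and that the multinomial factors $\frac1{N!}\binom{N}{a}$ coming from the $N$ indistinguishable copies of $\alpha$ combine with the $\frac1{i!}$'s in the definition of $l^\alpha_\bullet$ to reproduce exactly the weights demanded by the twisted identity; one also needs the convergence hypothesis (or a pronilpotence/completeness assumption on $\frakL$) to justify interchanging the summations. As a cleaner alternative, I would argue coalgebraically: an $L_\infty[1]$-structure on $\frakL$ is the same datum as a square-zero degree $1$ coderivation $D$ of the completed cofree conilpotent cocommutative coalgebra on $\frakL$; a Maurer--Cartan element $\alpha$ yields the group-like element $\exp(\alpha)=\sum_{k\ge 0}\frac1{k!}\alpha^{\odot k}$ and hence a coalgebra automorphism $\Phi_\alpha$, and $D^\alpha:=\Phi_\alpha^{-1}\circ D\circ\Phi_\alpha$ is again square-zero, being a conjugate of a square-zero coderivation; its Taylor coefficients are exactly the maps $l^\alpha_n$, the Maurer--Cartan equation being precisely the condition that guarantees $D^\alpha$ has no constant term and so restricts to the reduced coalgebra. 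This is the route of \cite{Get09} in the unshifted setting, transported to $L_\infty[1]$-algebras via the standard sign dictionary.
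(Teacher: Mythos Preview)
Your proposal is correct: the direct sign-and-combinatorics verification (split the Jacobi relation for $\alpha^{\ot N}\ot x_1\ot\cdots\ot x_n$ according to whether the inner bracket sees any $x_i$, use $|\alpha|=0$ to collapse the Koszul signs, and invoke the Maurer--Cartan equation on the pure-$\alpha$ branch) is a standard and valid proof, and the coalgebraic alternative you sketch is precisely Getzler's original argument transported to the shifted setting.

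Note, however, that the paper does not give its own proof of this proposition at all: it is stated as a citation from \cite{Get09} and used as a black box. So there is no approach in the paper to compare yours against; you have supplied a proof where the paper simply imports the result.
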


The following simple observation with immediate proof  will be very useful while considering differential operators of a given  weight $\lambda$.
\begin{prop}\label{Prop:  lambda twisted infinity algebra}
Let $\frakL=(\frakL,\{l_n \}_{n\geq1})$ be an $L_\infty[1]$-algebra and   $\lambda\in \bfk$.
Consider $l_n': \frakL^{\otimes n}\to \frakL, n\geq 1$ given by
\begin{eqnarray*}\label{Eq:  lambda twisted infinity algebra} l'_n(x_1\ot \cdots\ot x_n)= \lambda^{n-1}l_{n} ( x_1\ot \cdots\ot x_n),\ \forall x_1, \dots, x_n\in \frakL,\end{eqnarray*}
then $(\frakL,\{l_n' \}_{n\geq1})$ is also an $L_\infty[1]$-algebra.

If $l_1=0$, then imposing $l_1'=0$ and $l_n'=\lambda^{n-2}l_n$ for $n\geq 2$ also gives a new $L_\infty[1]$-structure on $\frakL$.
\end{prop}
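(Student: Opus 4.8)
The plan is to verify the two axioms of Definition~\ref{Def:L[1]-infty} directly for the rescaled brackets. The key observation is that rescaling multiplies \emph{every} summand of the generalised Jacobi identity at a fixed arity $n$ by one and the same power of $\lambda$, so that the identity for the rescaled brackets is a global scalar multiple of the identity for $\{l_n\}$ and hence holds as soon as the latter does.

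For the first claim, graded symmetry of $l_n'=\lambda^{\,n-1}l_n$ is inherited from $l_n$. In the generalised Jacobi identity at arity $n$, the summand indexed by $1\le i\le n$ and $\sigma\in\Sh(i,n-i)$ is $l_{n-i+1}'\bigl(l_i'(x_{\sigma(1)},\dots,x_{\sigma(i)}),x_{\sigma(i+1)},\dots,x_{\sigma(n)}\bigr)$, which differs from the corresponding summand for $\{l_n\}$ exactly by the factor $\lambda^{(n-i+1)-1}\cdot\lambda^{\,i-1}=\lambda^{\,n-1}$ (the outer operation $l_{n-i+1}'$ contributing $\lambda^{\,n-i}$ and the inner operation $l_i'$ contributing $\lambda^{\,i-1}$). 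Since this factor $\lambda^{\,n-1}$ is independent of $i$ and of $\sigma$, summing over $i$ and over the $(i,n-i)$-shuffles with the signs $\varepsilon(\sigma)$ pulls it out, leaving $\lambda^{\,n-1}$ times the generalised Jacobi identity for $\{l_n\}$, which is zero. When $\lambda\ne 0$ there is an even shorter route: $\lambda^{-1}\cdot\id_{\frakL}$ is a strict isomorphism of $L_\infty[1]$-algebras from $(\frakL,\{l_n\})$ onto $(\frakL,\{l_n'\})$.

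For the second claim, assume $l_1=0$ and set $l_1'=0$, $l_n'=\lambda^{\,n-2}l_n$ for $n\ge 2$; graded symmetry is again immediate. In the generalised Jacobi sum at arity $n$ for $\{l_n'\}$, the summands with $i=1$ vanish because $l_1'=0$, and the summands with $i=n$ vanish because their outer operation is $l_1'=0$; the corresponding summands for $\{l_n\}$ vanish as well, since $l_1=0$. For the remaining indices $2\le i\le n-1$ (which forces $n\ge 3$) we now have $n-i+1\ge 2$ and $i\ge 2$, so the same bookkeeping gives the common factor $\lambda^{(n-i+1)-2}\cdot\lambda^{\,i-2}=\lambda^{\,n-3}$, still independent of $i$ and $\sigma$; hence the identity at arity $n$ for $\{l_n'\}$ equals $\lambda^{\,n-3}$ times that for $\{l_n\}$ and so vanishes, while at arities $n=1,2$ every summand contains the vanishing operation $l_1'$, so the identity holds trivially. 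When $\lambda\ne 0$ this again follows from the first claim together with the remark that multiplying all brackets of an $L_\infty[1]$-algebra by one common scalar $c$ yields an $L_\infty[1]$-algebra, since it scales each generalised Jacobi identity by $c^{2}$.

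I expect no genuine obstacle here: the only point requiring care is the exponent bookkeeping — checking that inserting $l_i'$ into $l_{n-i+1}'$ always produces the same total power of $\lambda$, irrespective of $i$ and of the chosen shuffle — and, in the second claim, tracking precisely which summands are annihilated by $l_1=0$, so that restricting the index $i$ to the range $2\le i\le n-1$ is legitimate.
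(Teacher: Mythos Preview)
Your proof is correct and is precisely the direct verification the paper has in mind: the text simply declares the result a ``simple observation with immediate proof'' and omits details, and your exponent bookkeeping $\lambda^{(n-i+1)-1}\lambda^{i-1}=\lambda^{n-1}$ (resp.\ $\lambda^{(n-i+1)-2}\lambda^{i-2}=\lambda^{n-3}$) is exactly the one-line check required. The extra remarks about the strict isomorphism $\lambda^{-1}\cdot\id_{\frakL}$ and the uniform rescaling by $c$ are pleasant but unnecessary for the argument.
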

\medskip
 \subsection{A generalised  version of derived bracket technique}\ \label{Subsect: derived bracket technique}

Now we introduce a generalised version of the derived bracket technique  invented by   Voronov   \cite{Vor05a, Vor05b}. Since we consider
differential Lie algebras with weight $\lambda\in \bfk$, we
 insert $\lambda$ in the statements whenever needed by Proposition~\ref{Prop:  lambda twisted infinity algebra}, thus modify corresponding results in \cite{CC22, JS23}.

\begin{defn}
A \textbf{generalised V-data} is a septuple  $(\frakL, \frakm,\iota_\frakm, \fraka, \iota_\fraka, P,\Delta)$, where
\begin{itemize}
\item[$(1)$] $(\frakL,[-,-])$ is a graded Lie algebra,
\item[$(2)$] $(\frakm, [-,-]_\frakm)$ is a graded Lie  algebra together with an injective linear  map $\iota_\frakm: \frakm\to \frakL$ which is a    homomorphism of  graded Lie algebras,
\item[$(3)$] $\fraka$ is an abelian graded Lie  algebra endowed  with an injective linear  map $\iota_\fraka: \fraka\to \frakL$ which is a    homomorphism of  graded Lie algebras,
\item[$(4)$] $P:\frakL\lon\fraka$ is a linear map such that   $P\circ \iota_\fraka=\Id_\fraka$ and $\Ker(P)$ is a graded Lie subalgebra of $\frakL$,
\item[$(5)$] $\Delta\in \mathrm{Ker}(P)^1$ satisfying $[\Delta,\Delta]=0$ and  $[\Delta,\iota_\frakm(\frakm)]\subset \iota_\frakm(\frakm)$.
\end{itemize}
\end{defn}

\begin{theorem}\label{theo:V-data}
Let $(\frakL, \frakm,\iota_\frakm, \fraka, \iota_\fraka, P,\Delta)$ be a generalised V-datum and $\lambda\in\bfk$.
Then  the graded vector space
$s \frakL\oplus \fraka$  has  an $L_\infty[1]$-algebra structure  which is defined  as follows$\colon$
$$\begin{array}{rcl}
  l_1(sf)     &  =    & (-s [\Delta,f], P(f)),\\
  l_1(\xi)     &  =    & P[\Delta,\iota_{\fraka}(\xi)],\\
    l_2(sf,sg)   &  =    &(-1)^{|f|}\lambda s [f,g] , \\
    l_i(sf,\xi_1,\cdots,\xi_{i-1})   &  =    &\lambda^{i-1} P[\cdots[f,\iota_{\fraka}(\xi_1)] ,\cdots,\iota_{\fraka}(\xi_{i-1})] ,  i\geq2,\\
    l_i(\xi_1,\cdots,\xi_{i})   &  =    &\lambda^{i-1} P[\cdots[\Delta,\iota_{\fraka}(\xi_1)] ,\cdots,\iota_{\fraka}(\xi_{i})] ,  i\geq2,
\end{array}$$
for homogeneous elements $f,g\in\frakL$, $\xi,\xi_1,\cdots,\xi_i\in\fraka$ and all other components  of   $\{l_i\}_{i=1}^{+\infty}$ vanish.

Similarly, the graded vector space
$s \frakm\oplus \fraka$  has  also an $L_\infty[1]$-algebra structure  which is defined  as follows$\colon$
$$\begin{array}{rcl}
	l_1(sf)     &  =    & (-s \iota^{-1}_\frakm[\Delta,\iota_\frakm(f)], P\iota_\frakm(f)),\\
	l_1(\xi)     &  =    & P[\Delta,\iota_{\fraka}(\xi)],\\
	l_2(sf,sg)   &  =    &(-1)^{|f|}\lambda s [f,g]_\frakm , \\
	l_i(sf,\xi_1,\cdots,\xi_{i-1})   &  =    &\lambda^{i-1} P[\cdots[\iota_\frakm(f),\iota_{\fraka}(\xi_1)] ,\cdots,\iota_{\fraka}(\xi_{i-1})] ,  i\geq2,\\
	l_i(\xi_1,\cdots,\xi_{i})   &  =    &\lambda^{i-1} P[\cdots[\Delta,\iota_{\fraka}(\xi_1)] ,\cdots,\iota_{\fraka}(\xi_{i})] ,  i\geq2,
\end{array}$$
for homogeneous elements $f,g\in\frakm$, $\xi,\xi_1,\cdots,\xi_i\in\fraka$ and all other components of $\{l_i\}_{i=1}^{+\infty}$  vanish.

Moreover, there exists an injective  $L_\infty[1]$-algebra homomorphism $\iota:s \frak{M}\oplus \fraka\lon s \frakL\oplus \fraka$ induced by $\iota_\frakm$.
\end{theorem}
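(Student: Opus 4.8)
The plan is to build on Voronov's original higher-derived-bracket theorem, adapting it to the generalised V-data setting where Lie subalgebras are replaced by injective homomorphisms, and then to track the effect of the scalar $\lambda$ via Proposition~\ref{Prop:  lambda twisted infinity algebra}. First I would set up the classical situation: given a genuine V-datum — where $\frakm$ and $\fraka$ are honest graded Lie subalgebras of $\frakL$, $P$ is a projection onto $\fraka$ with $\Ker(P)$ a subalgebra, and $\Delta\in\Ker(P)^1$ with $[\Delta,\Delta]=0$, $[\Delta,\frakm]\subseteq\frakm$ — Voronov's theorem (\cite{Vor05a, Vor05b}) produces an $L_\infty[1]$-structure on $s\frakL\oplus\fraka$ (resp. $s\frakm\oplus\fraka$) whose brackets are exactly the $\lambda=1$ specialisation of the formulas in the statement, built from the higher derived brackets $P[\cdots[\,\cdot\,,\iota_\fraka(\xi_1)],\dots,\iota_\fraka(\xi_k)]$ together with the ``big bracket'' piece coming from $[\Delta,-]$. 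For the general $\lambda$, I would apply the second assertion of Proposition~\ref{Prop:  lambda twisted infinity algebra}: the $\lambda$-rescaled family $l_n'=\lambda^{n-1}l_n$ is again an $L_\infty[1]$-structure, and one checks directly that this rescaling turns Voronov's $\lambda=1$ formulas into precisely the displayed ones (note $l_2(sf,sg)=(-1)^{|f|}\lambda s[f,g]$ carries $\lambda^{2-1}=\lambda$, and $l_i$ for $i\geq2$ carries $\lambda^{i-1}$, consistent with the statement; the $l_1$ terms carry $\lambda^0=1$). So the genuinely new content is the passage from Lie subalgebras to injective homomorphisms.

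The second step is to reduce the ``injective homomorphism'' version to the ``subalgebra'' version by identification along the injections. Concretely, since $\iota_\frakm:\frakm\hookrightarrow\frakL$ and $\iota_\fraka:\fraka\hookrightarrow\frakL$ are injective homomorphisms of graded Lie algebras, their images $\iota_\frakm(\frakm)$ and $\iota_\fraka(\fraka)$ are graded Lie subalgebras of $\frakL$, and $\frakm\cong\iota_\frakm(\frakm)$, $\fraka\cong\iota_\fraka(\fraka)$ as graded Lie algebras. The hypotheses on $P$ and $\Delta$ are stated intrinsically ($P\circ\iota_\fraka=\Id_\fraka$, $\Ker(P)$ a subalgebra, $\Delta\in\Ker(P)^1$ with $[\Delta,\Delta]=0$ and $[\Delta,\iota_\frakm(\frakm)]\subseteq\iota_\frakm(\frakm)$), so they translate verbatim into the classical V-data hypotheses for the subalgebras $\iota_\frakm(\frakm),\iota_\fraka(\fraka)\subseteq\frakL$ with projection $P$ (after identifying $\fraka$ with its image). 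Transporting Voronov's $L_\infty[1]$-structure back along the isomorphisms $\frakm\cong\iota_\frakm(\frakm)$ and $\fraka\cong\iota_\fraka(\fraka)$ yields exactly the formulas in the second family of the statement, with the $\iota_\frakm^{-1}$ in $l_1(sf)$ accounting for the fact that $[\Delta,\iota_\frakm(f)]$ lands in $\iota_\frakm(\frakm)$ by hypothesis and must be pulled back to $\frakm$. The first family is the special case $\frakm=\frakL$, $\iota_\frakm=\Id$.

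For the last assertion — that $\iota_\frakm$ induces an injective $L_\infty[1]$-homomorphism $\iota:s\frakm\oplus\fraka\to s\frakL\oplus\fraka$ — I would take the strict morphism $\iota=s\iota_\frakm\oplus\Id_\fraka$ (with all higher Taylor coefficients zero) and verify it intertwines the two $L_\infty[1]$-structures bracket by bracket. This is immediate for $l_2(sf,sg)$ since $\iota_\frakm[f,g]_\frakm=[\iota_\frakm(f),\iota_\frakm(g)]$, and for the $l_i$ with $i\geq2$ since the higher derived brackets in the $\frakm$-version are literally defined by first applying $\iota_\frakm$; the only point needing a word is $l_1(sf)$, where one uses $s\iota_\frakm\circ(-s\iota_\frakm^{-1}[\Delta,\iota_\frakm(f)])=-s[\Delta,\iota_\frakm(f)]$ together with $P\iota_\frakm(f)$ matching on the $\fraka$-component. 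I expect the main obstacle to be purely bookkeeping: getting every Koszul sign, every shift $s$, and every power of $\lambda$ to line up between Voronov's conventions and the conventions fixed in Subsection~\ref{Subsect: notations} of this paper — in particular checking that the sign $(-1)^{|f|}$ in $l_2$ and the degree-$1$ condition on all $l_n$ survive the suspension and the $\lambda$-rescaling. There is no deep difficulty beyond this; the structural input (Voronov's theorem plus Proposition~\ref{Prop:  lambda twisted infinity algebra}) does all the real work.
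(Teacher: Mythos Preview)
Your proposal is correct and follows essentially the same route as the paper: reduce to Voronov's classical theorem by replacing $\frakm$ and $\fraka$ with their images $\iota_\frakm(\frakm)$ and $\iota_\fraka(\fraka)$ inside $\frakL$, apply Proposition~\ref{Prop:  lambda twisted infinity algebra} to insert the powers of $\lambda$, then transport back along the isomorphisms (the paper records this last step as a commutative square). One small slip: the rescaling $l_n'=\lambda^{n-1}l_n$ you invoke is the \emph{first} assertion of Proposition~\ref{Prop:  lambda twisted infinity algebra}, not the second (the second, with $\lambda^{n-2}$, requires $l_1=0$ and is used later in Proposition~\ref{Prop: sM+a is L_inf alg}).
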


%
\begin{proof}
Obviously, the quadruple $(\frakL,\mathfrak{\iota_{A}(A)},\iota_{\fraka}P,\Delta)$ is a V-data in the sense of Voronov
\cite{Vor05a}, by  \cite[Section 3]{Vor05a},  there is  an $L_\infty[1]$-algebra $\{l_i\}_{i=1}^{+\infty}$ on $s \frakL\oplus\mathfrak{\iota_A{(A)}}$, where
for homogeneous elements $f,g\in\frakL$, $\xi, \xi_1,\cdots, \xi_i\in\fraka$,
$$\begin{array}{rcl}
  l_1(sf)     &  =    & (-s[\Delta,f],\iota_{\fraka}P(f)),\\
  l_1(\iota_{\fraka}(\xi))     &  =    & \iota_{\fraka}P([\Delta,\iota_{\fraka}(\xi)]),\\
    l_2(sf,sg)   &  =    &(-1)^{|f|}s[f,g], \\
    l_i(sf,\iota_{\fraka}(\xi_1),\cdots,\iota_{\fraka}(\xi_{i-1}))   &  =    & \iota_{\fraka}P[\cdots[f,\iota_{\fraka}(\xi_1)],\cdots,\iota_{\fraka}(\xi_{i-1})],  i\geq2,\\
    l_i(\iota_{\fraka}(\xi_1),\cdots,\iota_{\fraka}(\xi_{i}))   &  =    & \iota_{\fraka}P[\cdots[\Delta,\iota_{\fraka}(\xi_1)],\cdots,\iota_{\fraka}(\xi_{i})],  i\geq2,
\end{array}$$
 and  all other components of $\{l_i\}_{i=1}^{+\infty}$  vanish.
 A similar  $L_\infty[1]$-algebra  structure $\{l_i\}_{i=1}^{+\infty}$ on $s\iota_\frakm (\frakm)\oplus\mathfrak{\iota_A{(A)}}$ exists,
 as well as   an inclusion $ s \iota_\frakm (\frakm)\oplus\mathfrak{\iota_A{(A)}} \hookrightarrow s \frakL\oplus\mathfrak{\iota_A{(A)}}$ of  $L_\infty[1]$-algebras.

By  Proposition~\ref{Prop:  lambda twisted infinity algebra}, we could insert $\lambda$ into the construction of the higher Lie brackets.
 The $L_\infty[1]$-algebra  structure  on  $s \frakL\oplus\mathfrak{\iota_A{(A)}}$ are given by:
for homogeneous elements $f,g\in\frakL$, $\xi, \xi_1,\cdots,$ $\xi_i\in\fraka$,
$$\begin{array}{rcl}
  l_1(sf)     &  =    & (-s[\Delta,f],\iota_{\fraka}P(f)),\\
  l_1(\iota_{\fraka}(\xi))     &  =    & \iota_{\fraka}P([\Delta,\iota_{\fraka}(\xi)]),\\
    l_2(sf,sg)   &  =    &(-1)^{|f|}\lambda s[f,g], \\
    l_i(sf,\iota_{\fraka}(\xi_1),\cdots,\iota_{\fraka}(\xi_{i-1}))   &  =    & \lambda^{i-1} \iota_{\fraka}P[\cdots[f,\iota_{\fraka}(\xi_1)],\cdots,\iota_{\fraka}(\xi_{i-1})],  i\geq2,\\
    l_i(\iota_{\fraka}(\xi_1),\cdots,\iota_{\fraka}(\xi_{i}))   &  =    & \lambda^{i-1}\iota_{\fraka}P[\cdots[\Delta,\iota_{\fraka}(\xi_1)],\cdots,\iota_{\fraka}(\xi_{i})],  i\geq2,
\end{array}$$
 and  all other components of higher Lie brackets vanish.
 The $L_\infty[1]$-algebra  structure  on $s \iota_\frakm (\frakm)\oplus\mathfrak{\iota_A{(A)}}$ are given by:
for homogeneous elements $f,g\in\frakm$, $\xi, \xi_1,\cdots, \xi_i\in\fraka$,
$$\begin{array}{rcl}
  l_1(s\iota_\frakm(f))     &  =    & (-s\iota_\frakm^{-1}([\Delta,\iota_\frakm(f)]),\iota_{\fraka}P \iota_\frakm(f)),\\
  l_1(\iota_{\fraka}(\xi))     &  =    & \iota_{\fraka}P([\Delta,\iota_{\fraka}(\xi)]),\\
    l_2(s\iota_\frakm(f),s\iota_\frakm(g))   &  =    &(-1)^{|f|}\lambda s\iota_\frakm([f,g]_\frakm), \\
    l_i(s\iota_\frakm(f),\iota_{\fraka}(\xi_1),\cdots,\iota_{\fraka}(\xi_{i-1}))   &  =    & \lambda^{i-1} \iota_{\fraka}P[\cdots[\iota_\frakm(f),\iota_{\fraka}(\xi_1)],\cdots,\iota_{\fraka}(\xi_{i-1})],  i\geq2,\\
    l_i(\iota_{\fraka}(\xi_1),\cdots,\iota_{\fraka}(\xi_{i}))   &  =    & \lambda^{i-1}\iota_{\fraka}P[\cdots[\Delta,\iota_{\fraka}(\xi_1)],\cdots,\iota_{\fraka}(\xi_{i})],  i\geq2,
\end{array}$$
 and  all other components of higher Lie brackets  vanish.
 There exists also an inclusion $ s \iota_\frakm (\frakm)\oplus\mathfrak{\iota_A{(A)}} \hookrightarrow s \frakL\oplus\mathfrak{\iota_A{(A)}}$ of  $L_\infty[1]$-algebras.


 Then the theorem holds by the following commutative diagram
 \[\xymatrixcolsep{5pc}\xymatrix{
 	s \frakL\oplus\fraka  \ar[r]_{\cong}^{\left(\begin{matrix}
 			\rm{Id}	&  0\\
 			0	&  \iota_\fraka
 		\end{matrix}\right)} &s \frakL\oplus \iota_\fraka{(\fraka)} \\
 	s \frakm\oplus\fraka \ar[u]^\iota \ar[r]^{\cong}_{\left(\begin{matrix}
 			 \iota_\frakm 	&  0\\
 			0	&  \iota_\fraka
 		\end{matrix}\right)} &s \iota_\frakm{(\frakm)}\oplus\iota_\fraka{(\fraka)}.\ar@{^{(}->}[u]^{\rm{inc}}} \]



\end{proof}

%

\begin{defn} Let $(\frakL, \frakm,\iota_\frakm, \fraka, \iota_\fraka, P,\Delta)$ and  $(\mathfrak{L'}, \mathfrak{M'},\iota_{\frakm'}, \mathfrak{A'}, \iota_{\fraka'},P',\Delta')$ be two   generalised  V-data.
	A \textbf{morphism} between them  is  a triple $f=(f_\frakL, f_\frakm, f_\fraka)$,  where $f_\frakL:\frakL\to\mathfrak{L'}$,
 $f_\frakm: \frakm\to\mathfrak{M'}$ and $f_\fraka: \fraka\to\mathfrak{A'}$  are three  homomorphisms of graded Lie algebras such that
 $f\circ \iota_\frakm= \iota_{\frakm'}\circ f_\frakm$, $f\circ \iota_\fraka= \iota_{\fraka'}\circ f_\fraka$,
 $f_\fraka\circ P=P'\circ f_\frakL$ and $f_\frakL(\Delta)=\Delta'$.
\end{defn}


 The following   result is obvious.
 \begin{prop}\label{prop:L_infty morphism}
 Given a morphism of generalised V-data $$f: (\frakL, \frakm,\iota_\frakm, \fraka, \iota_\fraka, P,\Delta)\to (\mathfrak{L'}, \mathfrak{M'},\iota_{\frakm'}, \mathfrak{A'}, \iota_{\fraka'},P',\Delta'),$$  there exists an $L_\infty[1]$-algebra homomorphism $$\tilde{f}: s\frakm\oplus\fraka \to s\mathfrak{M'}\oplus\mathfrak{A'}$$ induced by $f$.
 \end{prop}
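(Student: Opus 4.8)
The plan is to realise $\tilde f$ as the \emph{strict} (arity-one) morphism $\tilde f = s f_\frakm \oplus f_\fraka$ and to check that the defining compatibilities of a morphism of generalised V-data are precisely what is needed for this linear map to intertwine all the structure maps $l_n$ of Theorem~\ref{theo:V-data}. Concretely, I would first pass to the larger $L_\infty[1]$-algebras: using the identification $s\frakL\oplus\fraka\cong s\frakL\oplus\iota_\fraka(\fraka)$ from the commutative square in Theorem~\ref{theo:V-data}, consider $s f_\frakL\oplus f_\fraka\colon s\frakL\oplus\fraka\to s\mathfrak{L'}\oplus\mathfrak{A'}$ and verify term by term that it commutes with the brackets. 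For $l_1$ the $P$-component uses $f_\fraka\circ P=P'\circ f_\frakL$ and the $[\Delta,-]$-component uses that $f_\frakL$ is a graded Lie homomorphism together with $f_\frakL(\Delta)=\Delta'$; for $l_2(sf,sg)=(-1)^{|f|}\lambda s[f,g]$ it is immediate since $f_\frakL$ preserves the bracket and is of degree $0$ (so the Koszul signs match); for the higher brackets $l_i$ ($i\geq 2$) of either family one pushes $f_\frakL$ through the iterated brackets $[\cdots[f,\iota_\fraka(\xi_1)],\dots]$ and $[\cdots[\Delta,\iota_\fraka(\xi_1)],\dots]$ using $f_\frakL\circ\iota_\fraka=\iota_{\fraka'}\circ f_\fraka$, $f_\frakL(\Delta)=\Delta'$ and $f_\fraka\circ P=P'\circ f_\frakL$, the factors $\lambda^{i-1}$ being carried along untouched.

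Next I would restrict to the smaller algebras. The condition $f_\frakL\circ\iota_\frakm=\iota_{\frakm'}\circ f_\frakm$ gives $f_\frakL(\iota_\frakm(\frakm))\subseteq\iota_{\frakm'}(\mathfrak{M'})$, so the strict morphism $s f_\frakL\oplus f_\fraka$ maps the sub-$L_\infty[1]$-algebra $s\iota_\frakm(\frakm)\oplus\iota_\fraka(\fraka)$ into $s\iota_{\frakm'}(\mathfrak{M'})\oplus\iota_{\fraka'}(\mathfrak{A'})$; transporting back along the vertical isomorphisms of the square in Theorem~\ref{theo:V-data}, the restricted map is exactly $\tilde f=s f_\frakm\oplus f_\fraka\colon s\frakm\oplus\fraka\to s\mathfrak{M'}\oplus\mathfrak{A'}$, and it inherits the property of being a strict $L_\infty[1]$-morphism. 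That is the assertion.

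The one point that needs a little care — which I would single out as the main (if still routine) obstacle — is the $l_1$-component on the smaller algebra, $l_1(sf)=(-s\,\iota_\frakm^{-1}[\Delta,\iota_\frakm(f)],\,P\iota_\frakm(f))$, whose first summand involves the partially defined inverse $\iota_\frakm^{-1}$. Here one must first invoke condition~(5) of the V-data, $[\Delta,\iota_\frakm(\frakm)]\subseteq\iota_\frakm(\frakm)$, so that $\iota_\frakm^{-1}$ applies, and then check $f_\frakm\bigl(\iota_\frakm^{-1}[\Delta,\iota_\frakm(f)]\bigr)=\iota_{\frakm'}^{-1}[\Delta',\iota_{\frakm'}(f_\frakm(f))]$ by applying the injective map $\iota_{\frakm'}$ to both sides and using $f_\frakL\circ\iota_\frakm=\iota_{\frakm'}\circ f_\frakm$ together with $f_\frakL(\Delta)=\Delta'$. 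Once this is in place, everything else follows from the strict compatibilities already recorded, and no higher Taylor components of $\tilde f$ are needed.
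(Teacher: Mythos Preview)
Your proposal is correct; defining $\tilde f$ as the strict morphism $sf_\frakm\oplus f_\fraka$ and verifying that the axioms of a morphism of generalised V-data are exactly what is needed to intertwine each $l_i$ is the natural argument. The paper itself offers no proof beyond declaring the result ``obvious,'' so your write-up simply spells out what the authors leave implicit; your care with the $\iota_\frakm^{-1}$ term in $l_1$ is the one place where a reader might otherwise pause, and you handle it correctly.
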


Obviously we have the modified version of Theorem~\ref{theo:V-data} as well as a characterisation of their Maurer-Cartan elements.
\begin{prop}\label{Prop: sM+a is L_inf alg}
Let $(\frakL, \frakm,\iota_\frakm, \fraka, \iota_\fraka, P,\Delta)$ be a generalised V-datum and $\lambda\in\bfk$. If $P\circ \iota_\frakm=0$ and $\Delta =0$, then  there exists  an $L_\infty[1]$-algebra  structure on $s \frakm\oplus{\fraka}$, where the higher Lie bracket $\{l_i\}_{i=1}^{+\infty})$ are given by
$$\begin{array}{rcl}
    l_2(sf,sg)   &  =    &(-1)^{|f|} s[f,g]_\frakm , \\
    l_i(sf,\xi_1,\cdots,\xi_{i-1})   &  =    &\lambda^{i-2} P[\cdots[\iota_\frakm(f),\iota_{{\fraka}}(\xi_1)]_{\frakL},\cdots,\iota_{{\fraka}}(\xi_{i-1})]_{\frakL} ,  i\geq2,\\
\end{array}$$
for homogeneous elements $f,g\in\frakm$, $\xi_1,\cdots,\xi_i\in{\fraka}$ and all other components vanish.

Let $f\in \frakm^1, \xi\in \fraka^0$. The pair $(sf, \xi)$ is a Maurer-Cartan element in the $L_\infty[1]$-algebra    $s \frakm\oplus{\fraka}$
if and only if
$$f\in \mathcal{MC}(\frakm)\ \mathrm{and} \sum_{n=2}^\infty\frac{1}{(n-1)!} \lambda^{n-2} P[\cdots[\iota_\frakm(f),\underbrace{\iota_{{\fraka}}(\xi)]_{\frakL} ,\cdots,\iota_{{\fraka}}(\xi)}_{(n-1)\ \mathrm{times}}]_{\frakL}=0.$$
\end{prop}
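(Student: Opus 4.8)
The plan is to obtain the statement by specialising the construction of Theorem~\ref{theo:V-data} and then rescaling by powers of $\lambda$ via Proposition~\ref{Prop:  lambda twisted infinity algebra}, starting however from the \emph{$\lambda$-free} Voronov structure rather than from the $\lambda$-inserted one displayed in Theorem~\ref{theo:V-data}. First I would recall, as in the proof of Theorem~\ref{theo:V-data}, that Voronov's construction \cite[Section~3]{Vor05a} produces an $L_\infty[1]$-structure $\{l_i\}_{i\geq1}$ on $s\iota_\frakm(\frakm)\oplus\iota_\fraka(\fraka)$, which we identify with $s\frakm\oplus\fraka$ through $\iota_\frakm$ and $\iota_\fraka$, and whose only possibly nonzero components are
$$l_1(sf)=(-s\iota_\frakm^{-1}[\Delta,\iota_\frakm(f)]_\frakL,\ P\iota_\frakm(f)),\qquad l_1(\xi)=P[\Delta,\iota_\fraka(\xi)]_\frakL,$$
$$l_2(sf,sg)=(-1)^{|f|}s[f,g]_\frakm,\qquad l_i(sf,\xi_1,\cdots,\xi_{i-1})=P[\cdots[\iota_\frakm(f),\iota_\fraka(\xi_1)]_\frakL,\cdots,\iota_\fraka(\xi_{i-1})]_\frakL\quad(i\geq2),$$
$$l_i(\xi_1,\cdots,\xi_i)=P[\cdots[\Delta,\iota_\fraka(\xi_1)]_\frakL,\cdots,\iota_\fraka(\xi_i)]_\frakL\quad(i\geq2).$$

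Next I would impose the two extra hypotheses. Setting $\Delta=0$ annihilates $l_1(\xi)$ and every $l_i(\xi_1,\cdots,\xi_i)$, and together with $P\circ\iota_\frakm=0$ it also forces $l_1(sf)=0$; what remains is exactly $l_2(sf,sg)=(-1)^{|f|}s[f,g]_\frakm$ together with $l_i(sf,\xi_1,\cdots,\xi_{i-1})=P[\cdots[\iota_\frakm(f),\iota_\fraka(\xi_1)]_\frakL,\cdots,\iota_\fraka(\xi_{i-1})]_\frakL$ for $i\geq2$. Since now $l_1=0$, the second assertion of Proposition~\ref{Prop:  lambda twisted infinity algebra} applies, so that putting $l_1'=0$ and $l_n'=\lambda^{n-2}l_n$ for $n\geq2$ again defines an $L_\infty[1]$-structure on $s\frakm\oplus\fraka$; as $\lambda^0=1$ leaves $l_2$ unchanged and each higher bracket acquires the factor $\lambda^{i-2}$, this rescaled structure is precisely the one claimed in the Proposition. (It is this subtlety that forces us to start from the $\lambda$-free structure: the rescaled structure is \emph{not} a rescaling of the one in Theorem~\ref{theo:V-data}, since the two already differ at $\lambda=0$.)

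For the Maurer-Cartan characterisation I would fix $f\in\frakm^1$ and $\xi\in\fraka^0$, so that $(sf,\xi)$ has degree $0$ in $s\frakm\oplus\fraka$ and permuting copies of it produces no Koszul signs. Expanding by multilinearity gives $l_n'((sf,\xi)^{\otimes n})=\sum_{k=0}^n\binom{n}{k}l_n'((sf)^{\otimes k}\otimes\xi^{\otimes(n-k)})$, and from the list of surviving operations (and $\Delta=0$) the inner term vanishes unless $k=1$ with $n\geq2$, or $(k,n)=(2,2)$, while $l_1'=0$. Hence, using $\binom{n}{1}=n$ to absorb one factorial, the Maurer-Cartan equation $\sum_{n\geq1}\frac{1}{n!}l_n'((sf,\xi)^{\otimes n})=0$ reduces to
$$\tfrac12(-1)^{|f|}s[f,f]_\frakm+\sum_{n=2}^\infty\frac{1}{(n-1)!}\lambda^{n-2}P[\cdots[\iota_\frakm(f),\underbrace{\iota_\fraka(\xi)]_\frakL,\cdots,\iota_\fraka(\xi)}_{(n-1)\ \mathrm{times}}]_\frakL=0.$$
The first summand lies in the $s\frakm$-summand of $s\frakm\oplus\fraka$ and all the remaining terms lie in $\fraka$, so the equation splits into two independent conditions; in characteristic $0$ the $s\frakm$-part holds iff $[f,f]_\frakm=0$, i.e.\ $f\in\mathcal{MC}(\frakm)$, and the $\fraka$-part is precisely the displayed sum. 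This gives the desired equivalence.

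I do not expect a real obstacle. The one point genuinely requiring care is that the $\lambda^{i-2}$-rescaling must be applied to the $\lambda$-free Voronov structure---so that the hypothesis $l_1=0$ of Proposition~\ref{Prop:  lambda twisted infinity algebra} is legitimately met and the $\lambda=0$ case (where only $l_2$ survives) comes out correctly---together with the routine bookkeeping in the Maurer-Cartan expansion, in particular retaining the sign $(-1)^{|f|}$ and the identity $\binom{n}{1}=n$ that converts $\frac1{n!}$ into $\frac1{(n-1)!}$.
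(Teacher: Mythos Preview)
Your proposal is correct and follows essentially the same route as the paper's proof, which tersely invokes Proposition~\ref{Prop:  lambda twisted infinity algebra} for the first statement and then expands the Maurer-Cartan equation directly for the second. Your treatment is in fact more careful on the key point: you make explicit that one must start from the $\lambda$-free Voronov structure on $s\frakm\oplus\fraka$ (so that the hypotheses $\Delta=0$, $P\circ\iota_\frakm=0$ force $l_1=0$) and then apply the \emph{second} assertion of Proposition~\ref{Prop:  lambda twisted infinity algebra} to obtain the $\lambda^{i-2}$ weights, rather than specialising the $\lambda^{i-1}$-weighted structure of Theorem~\ref{theo:V-data}; the paper leaves this implicit.
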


\begin{proof}The first statement follows from Proposition~\ref{Prop:  lambda twisted infinity algebra}. For the  second statement,
$(sf, \xi)$ is a Maurer-Cartan element in the $L_\infty[1]$-algebra    $s \frakm\oplus{\fraka}$ if and only if
\begin{equation*}
		\begin{aligned}
			0&=	\sum_{n=1}^\infty \frac{1}{n!}l_n((sf, \xi)^{\otimes n})\\
			&=\frac{1}{2}l_2(sf,sf)+\sum_{n=2}^\infty\frac{1}{(n-1)!}l_n(sf,\underbrace{\xi,\dots,\xi}_{(n-1)\ \mathrm{times}})  \\
			&=-\frac{1}{2}s[f,f]_{\frakm}+ \sum_{n=2}^\infty\frac{1}{(n-1)!} \lambda^{n-2} P[\cdots[\iota_\frakm(f),\underbrace{\iota_{{\fraka}}(\xi)]_{\frakL} ,\cdots,\iota_{{\fraka}}(\xi)}_{(n-1)\ \mathrm{times}}]_{\frakL},
		\end{aligned}
	\end{equation*}
if and only if $$[f,f]_{\frakm}=0\ \mathrm{and}\ \sum_{n=2}^\infty\frac{1}{(n-1)!} \lambda^{n-2} P[\cdots[\iota_\frakm(f),\underbrace{\iota_{{\fraka}}(\xi)]_{\frakL} ,\cdots,\iota_{{\fraka}}(\xi)}_{(n-1)\ \mathrm{times}}]_{\frakL}=0.$$
\end{proof}


\medskip

\subsection{$L_\infty[1]$-structure for relative  differential Lie algebras with weight}\  \label{Subsect: Linifnity structure for relative  differential Lie algebras}

In this subsection, by using the generalised version of derived bracket technique, we found an $L_\infty[1]$-algebra  whose Maurer-Cartan elements   are in bijection with the set of structures of  relative differential Lie algebras of weight $\lambda$, thus generalising the result of Jiang and Sheng \cite{JS23} from weight $1$ case to arbitrary weight cases.

We recall  the classical Nijenhuis-Richardson brackets and basic facts about LieAct triples; for details, see \cite{CC22, JS23}.

Given a vector space $V$, its exterior algebra is $\bigwedge (V):=\bigoplus_{k=0}^\infty \wedge^k V$ and  the reduced version is $\bar{\bigwedge} (V):=\bigoplus_{k=1}^\infty \wedge^k V$. We consider the graded vector space $ \mathrm{Hom}(\bar{\bigwedge} (V),V)$, so for  $f\in\mathrm{Hom}(\wedge^{n+1}V,V)$, its degree is   $n$. The graded space  $\mathrm{Hom}(\bar{\bigwedge} (V),V)$ is a graded Lie algebra \cite{NR66}  under the Nijenhuis-Richardson bracket $[~,~]_{\NR}$ which is defined as follows:
for arbitrary $f\in\mathrm{Hom}(\wedge^{p+1}V,V)$ and $g\in\mathrm{Hom}(\wedge^{q+1}V,V)$,
\begin{equation*}
	[f,g]_{\NR}:=f\bar\circ g-(-1)^{pq}g\bar\circ f,
\end{equation*}
where $f\bar\circ g\in \mathrm{Hom}(\wedge^{p+q+1}V,V)$ is given by
\begin{equation*}
	\begin{aligned}
		f \bar\circ g (x_1,\cdots,x_{p+q+1})
		&=\sum_{\sigma\in \Sh(q+1,p)}\sgn(\sigma)f (g (x_{\sigma(1)},\cdots,x_{\sigma(q+1)}),x_{\sigma(q+2)},\cdots,x_{\sigma(p+q+1)}). \end{aligned}
\end{equation*}

\begin{defn}
	A \textbf{LieAct triple} is a triple $(\frakg,\frakh,\rho)$, where $(\frakg,[~,~]_\frakg)$ and $(\frakh,[~,~]_\frakh)$ are Lie algebras and $\rho:\frakg\lon \mathrm{Der}(\frakh)$ is a homomorphism of Lie algebras.
\end{defn}

\begin{remark}
Given a LieAct triple     $(\frakg,\frakh,\rho)$, the Lie algebra homomorphism $\rho:\frakg\lon \mathrm{Der}(\frakh)$ means that $\frakh$ is a Lie $\frakg$-module and there exists an action $\frakg\otimes\frakh\lon\frakh$ given by $x\cdot u:=\rho(x)(u)$,   subject to the Leibniz rule
	\begin{equation*}
x\cdot[u, v]_\frakh=[x\cdot u, v]_\frakh+[u,x\cdot v]_\frakh, \quad\forall x\in \frakg ~\mbox{and}~ u,v\in\frakh.
\end{equation*}

\end{remark}


%

Given two vector spaces $V$ and $W$, by the isomorphism
$$\begin{array}{rcl}
\varphi:\wedge^n(V\oplus W)	& \longrightarrow     &\bigoplus_{k+l=n, k, l\geq 0}\wedge^kV\otimes \wedge^lW \\
	(v_1+w_1)\wedge\cdots\wedge(v_n+w_n)        &\mapsto      &\sum\limits_{\sigma\in \Sh(k,n-k)}\sgn(\sigma)v_{\sigma(1)}\wedge\cdots\wedge v_{\sigma(k)}\otimes w_{\sigma(k+1)}\wedge\cdots\wedge w_{\sigma(n)},
\end{array}$$
we have an isomorphism $$\mathrm{Hom}(\wedge^n(V\oplus W),V\oplus W)\cong\big(\oplus_{k+l=n}\mathrm{Hom}(\wedge^kV\otimes \wedge^lW,V)\oplus(\oplus_{k+l=n}\mathrm{Hom}(\wedge^kV\otimes \wedge^lW,  W))\big).$$
With this isomorphism in mind, we   have  the following result.
\begin{prop}\rm{(}\cite{CC22}\rm{)}\label{prop: grad sublie alg}
Let $\frakg$ and $\frakh$ be two vector spaces. Let   $$\frakL'=\mathrm{Hom}(\bar{\wedge}(\frakg\oplus\frakh),\frakg\oplus\frakh)$$ and denote $$\mathfrak{M}':=\mathrm{Hom}(\bar{\wedge}\frakg,\frakg)\oplus\mathrm{Hom}(\bar{\wedge}\frakg\otimes\bar\wedge\frakh,\frakh)
\oplus\mathrm{Hom}(\bar{\wedge}\frakh,\frakh).$$ Let  $\iota_{\frakm'}: {\frakm'}\to {\frakL'}$  be the natural injection.
Then the Nijenhuis-Richardson  bracket on  $({\frakL'}, [-,-]_{\NR})$ induces a Lie bracket on $\mathfrak{M}$ such that  $\iota_{\frakm'}: {\frakm'}\to {\frakL'}$ is an injective homomorphism of graded Lie algebras.

Moreover, Maurer-Cartan elements of the graded Lie algebra $\frakM'$  are in bijection with the set of LieAct triple structures on $(\frakg, \frakh)$.

\end{prop}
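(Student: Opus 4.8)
The plan is to verify the two claimed statements separately: first that the Nijenhuis--Richardson bracket restricts to a graded Lie bracket on $\mathfrak{M}'$, and second that Maurer--Cartan elements of $(\mathfrak{M}', [-,-]_{\NR})$ correspond bijectively to LieAct triple structures on $(\frakg,\frakh)$. For the first part, I would use the decomposition $\mathrm{Hom}(\wedge^n(\frakg\oplus\frakh),\frakg\oplus\frakh)\cong\bigoplus_{k+l=n}\bigl(\mathrm{Hom}(\wedge^k\frakg\otimes\wedge^l\frakh,\frakg)\oplus\mathrm{Hom}(\wedge^k\frakg\otimes\wedge^l\frakh,\frakh)\bigr)$ coming from the isomorphism $\varphi$, and then observe that $\mathfrak{M}'$ is precisely the span of those homogeneous components $\mathrm{Hom}(\wedge^k\frakg\otimes\wedge^l\frakh,\frakg)$ with $l=0$ together with $\mathrm{Hom}(\wedge^k\frakg\otimes\bar\wedge\frakh,\frakh)$ and $\mathrm{Hom}(\bar\wedge\frakh,\frakh)$. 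The key is a bookkeeping check: for $f\in\mathrm{Hom}(\wedge^k\frakg\otimes\wedge^l\frakh,\ast)$ and $g\in\mathrm{Hom}(\wedge^{k'}\frakg\otimes\wedge^{l'}\frakh,\ast)$, track the ``$\frakg$-arity'' and ``$\frakh$-arity'' of the plug-in $f\bar\circ g$ under the shuffle decomposition, and confirm that whenever $f,g$ both lie in $\mathfrak{M}'$, so does $f\bar\circ g$ (e.g.\ a map landing in $\frakg$ must consume only $\frakg$-inputs, and plugging such a map into another such map again consumes only $\frakg$-inputs; plugging an $\frakh$-valued map into a $\frakg$-valued map is forbidden by arity reasons because the $\frakg$-valued map only accepts $\frakg$-inputs; etc.). Since $\mathfrak{M}'$ is a graded subspace closed under $\bar\circ$ from both sides, it is closed under $[-,-]_{\NR}$, and the injection $\iota_{\frakm'}$ is then tautologically a homomorphism of graded Lie algebras.

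For the second part, I would unravel what it means for an element $\pi\in\mathfrak{M}'$ of degree $1$ to satisfy $[\pi,\pi]_{\NR}=0$. Writing $\pi=(\mu,\rho,\nu)$ with $\mu\in\mathrm{Hom}(\wedge^2\frakg,\frakg)$, $\rho\in\mathrm{Hom}(\frakg\otimes\frakh,\frakh)$, and $\nu\in\mathrm{Hom}(\wedge^2\frakh,\frakh)$ (these being exactly the degree-$1$ pieces of the three summands of $\mathfrak{M}'$), I would expand $[\pi,\pi]_{\NR}=0$ and collect it according to the $(\frakg,\frakh)$-bidegree of the output. The component valued in $\frakg$ with two $\frakg$-inputs gives the Jacobi identity for $\mu$; the component valued in $\frakh$ with two $\frakh$-inputs gives the Jacobi identity for $\nu$; the component with one $\frakg$-input and one $\frakh$-input, valued in $\frakh$, gives that $\rho(x)$ is a derivation of $\nu$, i.e.\ $\rho(x)[u,v]_{\frakh}=[\rho(x)u,v]_{\frakh}+[u,\rho(x)v]_{\frakh}$; and the component with two $\frakg$-inputs and one $\frakh$-input gives that $\rho$ is a Lie algebra morphism $\frakg\to\mathrm{Der}(\frakh)$, i.e.\ $\rho([x,y]_{\frakg})=[\rho(x),\rho(y)]$. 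Conversely, a LieAct triple structure packages exactly this data, so the assignment $\pi\mapsto(\frakg,\frakh,\rho)$ is the desired bijection. This is the standard ``homotopy transfer of algebraic structures into a graded Lie algebra of multilinear maps'' computation, parallel to how the Nijenhuis--Richardson bracket on $\mathrm{Hom}(\bar\wedge V,V)$ has its Maurer--Cartan elements equal to Lie algebra structures on $V$.

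The main obstacle I anticipate is purely organizational rather than conceptual: keeping the signs and shuffle sums straight when expanding $[\pi,\pi]_{\NR}$ on the mixed bidegree components, since the isomorphism $\varphi$ introduces shuffle signs that must be reconciled with the $\sgn(\sigma)$ appearing in the definition of $f\bar\circ g$. A clean way around the worst of it is to cite the classical result of Nijenhuis--Richardson \cite{NR66} that Maurer--Cartan elements of $(\mathrm{Hom}(\bar\wedge(\frakg\oplus\frakh),\frakg\oplus\frakh),[-,-]_{\NR})$ are precisely Lie algebra structures on $\frakg\oplus\frakh$, and then note that such a structure lies in $\mathfrak{M}'$ exactly when $\frakg$ is a subalgebra, $\frakh$ is an ideal with the induced bracket making it a Lie algebra, and the $\frakg$-action on $\frakh$ is by derivations --- which is the definition of a LieAct triple. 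I would reference \cite{CC22} for the details, since the proposition is attributed there.
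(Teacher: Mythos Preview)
Your proposal is correct in strategy and essentially complete. Note, however, that the paper does not supply its own proof of this proposition: it is quoted verbatim from \cite{CC22}, so there is no ``paper's proof'' to compare against beyond the citation. What you have written is therefore more than the paper itself provides.

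Two small corrections to your write-up. First, in the second part, the component of $[\pi,\pi]_{\NR}$ encoding the derivation property of $\rho(x)$ lives in $\mathrm{Hom}(\frakg\otimes\wedge^2\frakh,\frakh)$, i.e.\ it has one $\frakg$-input and \emph{two} $\frakh$-inputs, not one of each (recall $[\pi,\pi]_{\NR}$ has degree $2$, hence three inputs total). Second, your alternative ``clean route'' at the end is the most efficient argument and is worth promoting to the main line: a Maurer--Cartan element of $\frakL'$ is a Lie bracket on $\frakg\oplus\frakh$, and it lies in $\mathfrak{M}'$ precisely when the components in $\mathrm{Hom}(\wedge^2\frakg,\frakh)$, $\mathrm{Hom}(\frakg\otimes\frakh,\frakg)$, and $\mathrm{Hom}(\wedge^2\frakh,\frakg)$ vanish, i.e.\ when $\frakg$ is a Lie subalgebra and $\frakh$ is a Lie ideal---which is exactly the semidirect-product data of a LieAct triple. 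This bypasses the sign bookkeeping you flagged as the main obstacle.
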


\begin{defn}[{\cite{CC22, JS23}}]
Let  $(\frakg,\frakh,\rho)$ be a  LieAct triple. A linear map $D:\frakg\lon\frakh$ is called a \textbf{relative differential operator of weight $\lambda$} if the following equality holds:
\begin{equation*}
	D([x, y]_\frakg)=\rho(x)(D(y))-\rho(y)(D(x))+\lambda [D(x), D(y)]_\frakh, \quad\forall x,y\in \frakg.
	\end{equation*}
The quadruple $(\frakg,\frakh,\rho, D)$ is called a \textbf{relative differential Lie algebra of weight} $\lambda$.
\end{defn}

Now we exhibit an   $L_\infty[1]$-algebra whose Maurer-Cartan elements   are in bijection with the set of structures of  relative differential Lie algebras of weight $\lambda$  structures on $(\frakg, \frakh)$, thus generalising the result of Jiang and Sheng \cite{JS23} from weight $1$ to arbitrary weight.

Let $\frakg$ and $\frakh$ be two vector spaces. As above, let
$$\begin{array}{rcl}{\frakL'}&=&\mathrm{Hom}(\bar{\wedge}(\frakg\oplus\frakh),\frakg\oplus\frakh),\\ {\frakm'}&=&\mathrm{Hom}(\bar{\wedge}\frakg,\frakg)\oplus\mathrm{Hom}(\bar{\wedge}\frakg\otimes\bar\wedge\frakh,\frakh)
	\oplus\mathrm{Hom}(\bar{\wedge}\frakh,\frakh),\end{array}$$
 and let $$ {\fraka'}=\mathrm{Hom}(\bar{\wedge}\frakg,\frakh). $$
 Denote  $\iota_{\fraka'}:{\fraka'}\to {\frakL'}$ to be the  natural injection
		and  $P':{\frakL'}\lon{\fraka'}$ to  be the natural surjection. Obviously with the induced Lie bracket,
  ${\fraka'}$ is an abelian graded Lie algebra,  $\iota_{\fraka'}:{\fraka'}\to {\frakL'}$
		 is also a homomorphism  of graded Lie algebras, $\Ker(P')$ is a graded Lie subalgebra of ${\frakL'}$,  $P\circ \iota_{\fraka'}=\Id_{\fraka'}$. Now let $\Delta'=0$.
Then the following result is easy by direct inspection and by Proposition~\ref{Prop: sM+a is L_inf alg}.

\begin{prop}[{Compare with \cite[Proposition 3.7]{JS23}}] \label{prop: V-data for relative and L-infinity}
The data $({\frakL'}, {\frakm'}, \iota_{\frakm'}, {\fraka'}, \iota_{\fraka'}, P', \Delta'=0)$ introduced above  is a generalised V-datum, and
 an $L_\infty[1]$-algebra structure on $s {\frakm'}\oplus{\fraka'}$  is given by
$$\begin{array}{rcl}
	l_2(sf,sg)   &  =    &(-1)^{|f|}  s[f,g]_{\NR}, \\
	l_i(sf,\xi_1,\cdots,\xi_{i-1})   &  =    &\lambda^{i-2} P[\cdots[f,\xi_1]_{\NR},\cdots,\xi_{i-1}]_{\NR},  i\geq2,
\end{array}$$
for homogeneous elements $f,g\in{\frakm'}$, $\xi_1,\cdots,\xi_{i-1}\in{\fraka'}$ and the other components vanish.
\end{prop}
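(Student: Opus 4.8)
The plan is to check the five conditions in the definition of a generalised V-datum for the septuple $({\frakL'}, {\frakm'}, \iota_{\frakm'}, {\fraka'}, \iota_{\fraka'}, P', \Delta'=0)$ and then to read off the displayed $L_\infty[1]$-brackets as the specialisation of Proposition~\ref{Prop: sM+a is L_inf alg}, whose two hypotheses $\Delta'=0$ and $P'\circ\iota_{\frakm'}=0$ I will verify along the way.

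Conditions (1) and (2) are essentially already available: $({\frakL'},[-,-]_{\NR})$ is a graded Lie algebra by Nijenhuis--Richardson \cite{NR66}, and Proposition~\ref{prop: grad sublie alg} (from \cite{CC22}) says precisely that the Nijenhuis--Richardson bracket restricts to ${\frakm'}$ so that $\iota_{\frakm'}\colon{\frakm'}\hookrightarrow{\frakL'}$ is an injective homomorphism of graded Lie algebras. For conditions (3) and (4) the key tool is the ``bidegree'' decomposition coming from the isomorphism $\varphi$ recalled before Proposition~\ref{prop: grad sublie alg},
\[
\frakL'\;\cong\;\bigoplus_{k+l\geq 1}\Big(\Hom(\wedge^k\frakg\otimes\wedge^l\frakh,\frakg)\;\oplus\;\Hom(\wedge^k\frakg\otimes\wedge^l\frakh,\frakh)\Big),
\]
under which ${\fraka'}=\Hom(\bar{\wedge}\frakg,\frakh)$ is the sum of the summands with $l=0$ and target $\frakh$, and $P'$ is the projection onto this sub-sum; in particular $P'\circ\iota_{\fraka'}=\Id_{{\fraka'}}$ is immediate. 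Informally, a bidegree-homogeneous element of ${\frakL'}$ lies in ${\fraka'}$ exactly when it consumes only inputs from $\frakg$ and returns an output in $\frakh$, and it lies in $\Ker(P')$ otherwise.

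Next I would establish conditions (3) and (4) by this input/output bookkeeping. For $f,g\in{\fraka'}$ the composite $f\bar\circ g$ feeds the $\frakh$-valued output of $g$ into a $\frakg$-slot of $f$ and hence is zero, so $[{\fraka'},{\fraka'}]_{\NR}=0$: thus ${\fraka'}$ is abelian and $\iota_{\fraka'}$ is an injective homomorphism of graded Lie algebras, giving (3). For (4), let $f,g\in\Ker(P')$ be bidegree-homogeneous; a nonzero contribution of $f\bar\circ g$ to ${\fraka'}$ would be a term all of whose inputs lie in $\frakg$ and whose value lies in $\frakh$. If the output of $g$ lies in $\frakg$, then $f$ receives only $\frakg$-inputs and has value in $\frakh$, forcing $f\in{\fraka'}$; if the output of $g$ lies in $\frakh$, then $g$ receives only $\frakg$-inputs and has value in $\frakh$, forcing $g\in{\fraka'}$. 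Both alternatives contradict $f,g\in\Ker(P')$, so the ${\fraka'}$-component of $f\bar\circ g$, and symmetrically of $g\bar\circ f$, vanishes; hence $[f,g]_{\NR}\in\Ker(P')$ and $\Ker(P')$ is a graded Lie subalgebra. Condition (5) is trivial because $\Delta'=0$: $[\Delta',\Delta']=0$ and $[\Delta',\iota_{\frakm'}({\frakm'})]=0\subseteq\iota_{\frakm'}({\frakm'})$.

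Finally, to obtain the $L_\infty[1]$-structure I would apply Proposition~\ref{Prop: sM+a is L_inf alg}. Its hypothesis $\Delta'=0$ holds by construction, and $P'\circ\iota_{\frakm'}=0$ holds because each of the three summands of ${\frakm'}$ --- maps $\bar{\wedge}\frakg\to\frakg$, maps $\bar{\wedge}\frakg\otimes\bar{\wedge}\frakh\to\frakh$, maps $\bar{\wedge}\frakh\to\frakh$ --- either has target $\frakg$ or consumes at least one input from $\frakh$, so none lies in the summand $\Hom(\bar{\wedge}\frakg,\frakh)$ onto which $P'$ projects. The brackets $l_2$ and $l_i$ in the statement are then exactly the formulas of Proposition~\ref{Prop: sM+a is L_inf alg} with $[-,-]_{{\frakm'}}$ the restriction of $[-,-]_{\NR}$ to ${\frakm'}$, $P=P'$, and $\iota_{\frakm'},\iota_{\fraka'}$ the natural injections (suppressed in the notation), the factors $\lambda^{i-2}$ being already built into that statement via Proposition~\ref{Prop:  lambda twisted infinity algebra}. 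I expect the only genuinely non-formal step to be the closure of $\Ker(P')$ under $[-,-]_{\NR}$ in condition (4); everything else is immediate from the cited results, and this is also the point at which the argument reduces, when $\lambda=1$, to \cite[Proposition~3.7]{JS23}.
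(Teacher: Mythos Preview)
Your proposal is correct and follows the same approach as the paper: the paper simply asserts (in the paragraph preceding the statement) that ${\fraka'}$ is abelian, that $\Ker(P')$ is a graded Lie subalgebra, and that $P'\circ\iota_{\fraka'}=\Id_{\fraka'}$, then declares the result ``easy by direct inspection and by Proposition~\ref{Prop: sM+a is L_inf alg}''. You have supplied exactly the bidegree bookkeeping that this ``direct inspection'' amounts to, in particular the argument for condition~(4), which the paper leaves entirely to the reader.
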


 \begin{theorem}[{Compare with \cite[Theorem 3.8]{JS23}}]

Let $\frakg, \frakh$ be two vector spaces.  Let $\pi\in\mathrm{Hom}(\wedge^2\frakg,\frakg)$, $\rho\in\mathrm{Hom}(\frakg\otimes\frakh,\frakh)$, $\mu\in\mathrm{Hom}(\wedge^2\frakh,\frakh)$ and $D\in\mathrm{Hom}(\frakg,\frakh)$, then
$(s (\pi+\rho+\mu),D)\in \mathcal{MC}(s \frakM'\oplus\fraka')$ if and only if $(\frakg, \frakh, \rho)$ is a  LieAct triple  and $D$ is  a relative differential  operator of weight  $\lambda\in\bfk$.
\end{theorem}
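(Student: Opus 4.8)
The plan is to deduce the statement directly from Proposition~\ref{Prop: sM+a is L_inf alg}, applied to the generalised V-datum $({\frakL'},{\frakm'},\iota_{\frakm'},{\fraka'},\iota_{\fraka'},P',\Delta'=0)$ exhibited in Proposition~\ref{prop: V-data for relative and L-infinity}. First one checks the two extra hypotheses of Proposition~\ref{Prop: sM+a is L_inf alg}: $\Delta'=0$ holds by construction, and $P'\circ\iota_{\frakm'}=0$ because ${\frakm'}=\mathrm{Hom}(\bar{\wedge}\frakg,\frakg)\oplus\mathrm{Hom}(\bar{\wedge}\frakg\otimes\bar{\wedge}\frakh,\frakh)\oplus\mathrm{Hom}(\bar{\wedge}\frakh,\frakh)$ contains no map whose inputs all lie in $\frakg$ and whose output lies in $\frakh$, while ${\fraka'}=\mathrm{Hom}(\bar{\wedge}\frakg,\frakh)$ consists precisely of such maps. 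Since $\pi+\rho+\mu\in({\frakm'})^1$ and $D\in\mathrm{Hom}(\frakg,\frakh)=({\fraka'})^0$, Proposition~\ref{Prop: sM+a is L_inf alg} reduces the assertion $(s(\pi+\rho+\mu),D)\in\mathcal{MC}(s{\frakm'}\oplus{\fraka'})$ to the conjunction of (a) $\pi+\rho+\mu\in\mathcal{MC}({\frakm'})$ and (b) $\sum_{n\geq2}\frac{1}{(n-1)!}\lambda^{n-2}P'[\cdots[\pi+\rho+\mu,D]_{\NR},\dots,D]_{\NR}=0$, where there are $n-1$ copies of $D$ and we suppress the injections $\iota_{\frakm'},\iota_{\fraka'}$ from the notation.

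Condition (a) is immediate from Proposition~\ref{prop: grad sublie alg}: the degree-$1$ part of the graded Lie algebra ${\frakm'}$ is exactly $\mathrm{Hom}(\wedge^2\frakg,\frakg)\oplus\mathrm{Hom}(\frakg\otimes\frakh,\frakh)\oplus\mathrm{Hom}(\wedge^2\frakh,\frakh)$, so a degree-$1$ element is precisely a triple $(\pi,\rho,\mu)$, and $[\pi+\rho+\mu,\pi+\rho+\mu]_{\NR}=0$ is equivalent to: $(\frakg,\pi)$ and $(\frakh,\mu)$ are Lie algebras and $\rho\colon\frakg\to\mathrm{Der}(\frakh)$ is a homomorphism of Lie algebras, i.e.\ $(\frakg,\frakh,\rho)$ is a LieAct triple.

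It remains to analyse (b). The structural observation is that bracketing with $D\colon\frakg\to\frakh$ in $({\frakL'},[-,-]_{\NR})$ converts a $\frakg$-slot into an $\frakh$-slot without changing the output; a short computation of the relevant $\bar\circ$-products then yields $[\pi,D]_{\NR}(x,y)=-D([x, y]_\frakg)$ and $[\rho,D]_{\NR}(x,y)=\rho(x)(D(y))-\rho(y)(D(x))$, both lying in $\mathrm{Hom}(\wedge^2\frakg,\frakh)\subseteq\iota_{\fraka'}({\fraka'})$, whereas $[\mu,D]_{\NR}\in\mathrm{Hom}(\frakg\otimes\frakh,\frakh)\subseteq\iota_{\frakm'}({\frakm'})$. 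Since $P'$ annihilates $\iota_{\frakm'}({\frakm'})$ and is the identity on $\iota_{\fraka'}({\fraka'})$, the $n=2$ term of (b) is the map $(x,y)\mapsto-D([x, y]_\frakg)+\rho(x)(D(y))-\rho(y)(D(x))$. For the higher terms one uses that ${\fraka'}$ is abelian: $[\pi,D]_{\NR}$ and $[\rho,D]_{\NR}$ already lie in ${\fraka'}$, so their further brackets with $D\in{\fraka'}$ vanish, while a direct computation gives $[[\mu,D]_{\NR},D]_{\NR}\in\mathrm{Hom}(\wedge^2\frakg,\frakh)$ with $[[\mu,D]_{\NR},D]_{\NR}(x,y)=2[D(x),D(y)]_\frakh$, and bracketing this once more with $D\in{\fraka'}$ again gives $0$. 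Hence all $n\geq4$ terms vanish, the $n=3$ term equals $\frac{1}{2!}\lambda\cdot 2[D(x),D(y)]_\frakh=\lambda[D(x),D(y)]_\frakh$, and (b) becomes $D([x, y]_\frakg)=\rho(x)(D(y))-\rho(y)(D(x))+\lambda[D(x),D(y)]_\frakh$ for all $x,y\in\frakg$, i.e.\ $D$ is a relative differential operator of weight $\lambda$ for the LieAct triple produced by (a). Together, (a) and (b) give exactly the claimed equivalence.

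The main obstacle is this last step: the Nijenhuis--Richardson computation must be carried out carefully, keeping track of the shuffle signs (the relevant shuffle sets being $\Sh(2,0)$ and $\Sh(1,1)$) and of the graded-antisymmetric extensions of $\pi,\rho,\mu,D$ to $\bar{\wedge}(\frakg\oplus\frakh)$; in particular one must correctly obtain the factor $2$ in $[[\mu,D]_{\NR},D]_{\NR}$ which cancels the $\frac{1}{2!}$ and makes the weight appear as $\lambda$ rather than $\frac{\lambda}{2}$. This factor, together with the powers $\lambda^{n-2}$ supplied by Proposition~\ref{Prop:  lambda twisted infinity algebra}, is precisely what has to be handled in order to upgrade \cite[Theorem 3.8]{JS23} from weight $1$ to arbitrary weight.
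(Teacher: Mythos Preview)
Your proposal is correct and follows essentially the same route as the paper: both reduce the Maurer--Cartan condition via Proposition~\ref{Prop: sM+a is L_inf alg} to $[\chi,\chi]_{\NR}=0$ (handled by Proposition~\ref{prop: grad sublie alg}) together with the iterated-bracket condition, then observe that the brackets with $D$ terminate after two steps and compute the surviving terms explicitly. The only cosmetic difference is that you split $\chi=\pi+\rho+\mu$ and track each piece separately, whereas the paper keeps $\chi$ intact and tracks the target subspace of each iterated bracket; the computations and the handling of the factor $2$ versus $\tfrac{1}{2!}$ are identical in substance.
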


\begin{proof}
Let $\chi:=\pi+\rho+\mu$. By Proposition~\ref{Prop: sM+a is L_inf alg},    $(s\chi,D) \in\mathcal{MC}(s \mathfrak{M'}\oplus\mathfrak{A'})$ if and only if   $\chi\in \mathcal{MC}(\frakm')$ (that is, $(\frakg, \frakh, \rho)$ is a LieAct triple by Proposition~\ref{prop: grad sublie alg})  and
  $\sum\limits_{k=2}^\infty\frac{\lambda^{k-2}}{(k-1)!}P[\dots,[\chi,\underbrace{D]_{\NR},\dots,D}_{(k-1)\ \mathrm{times}}]_{\NR}=0$.

Since $[\chi,D]_{\NR}\in\mathrm{Hom}(\wedge^2\frakg,\frakh)\oplus\mathrm{Hom}( \frakg\ot\frakh,\frakh)$,
$$[[\chi,D]_{\NR},D]_{\NR}\in\mathrm{Hom}(\wedge^2\frakg,\frakh)\ \mathrm{and}\
 [[[\chi,D]_{\NR},D]_{\NR},D]_{\NR}=0,$$  then we obtain
\begin{equation*}
	\begin{aligned}
		0=&	\sum_{k=2}^\infty\frac{\lambda^{k-2}}{(k-1)!}P[\dots,[\chi,\underbrace{D]_{\NR},\dots,D}_{(k-1)\ \mathrm{times}}]_{\NR}\\
		=&P[\chi,D]_{\NR}+\frac{\lambda}{2}P[[\chi,D]_{\NR},D]_{\NR}.
	\end{aligned}
\end{equation*}
  For arbitrary  $ (x, y) \in\wedge^2\frakg$, we have
\begin{equation*}
	\begin{aligned}
		0&=	P[\chi,D]_{\NR}(x,y)+\frac{\lambda}{2}P[[\chi,D]_{\NR},D]_{\NR}(x, y)\\
		&=\rho\bar\circ D(x,y)-D\bar\circ \pi(x,y)+\frac{\lambda}{2}(\mu\bar\circ D)\bar\circ D(x, y)\\
		&=\rho( D(x),y)-\rho( D(y),x)-D([x,y]_\frakg)+\frac{\lambda}{2}\big(\mu\bar\circ D(D(x),y)-\mu\bar\circ D(D(y),x)\big)\\
		&=\rho(x)D(y)-\rho(y) D(x)-D([x,y]_\frakg)+\frac{\lambda}{2}\big(-\mu (D(y),(D(x))+\mu((D(x),D(y))\big)\\
		&=\rho(x)D(y)-\rho(y) D(x)-D([x,y]_\frakg)+\lambda[D(x),D(y)]_\frakh,
	\end{aligned}
\end{equation*}
Hence $D$ is a relative differential  operator of weight  $\lambda$.
\end{proof}

\medskip

\subsection{$L_\infty[1]$-structure for absolute differential Lie algebras}\ \label{Subsect:  Linfinty for differential Lie algebras}

Let $ \frak g$ be a vector space. We consider the graded Lie algebra $$\frakL:=\mathrm{Hom}(\bar{\wedge}(\frakg\oplus\mathfrak{g}),\frakg\oplus\mathfrak{g})$$ endowed with the  Nijenhuis-Richardson bracket $[~,~]_{\NR}$.  For convenience of presentation, we shall write $\frakg'$ for the second $\frakg$ in $\frakg\oplus \frakg$,
that is, $$\frakL:=\mathrm{Hom}(\bar{\wedge}(\frakg\oplus\mathfrak{g}'),\frakg\oplus\mathfrak{g}').$$
Let $$\frakm:=\mathrm{Hom}(\bar{\wedge}\frakg,\frakg)\ \mathrm{and}\
 \fraka:=\Hom(\bar{\wedge}\frakg,\frakg).$$
 Endow $\frakm$ with the Nijenhuis-Richardson bracket and $\fraka$ with the trivial bracket.
Consider two linear maps $\iota_\frakm: \frakm\to \frakL$ and $\iota_\fraka: \fraka\to \frakL$ defined as follows:
For given $f:\wedge^{n+1}\frakg\lon\frakg \in \frakm$,
	 $\iota_\frakm(f)=\sum\limits_{i=0}^{n+1}f_i$
	 where   $f_i:\wedge^{n+1-i}\frakg\ot\wedge^{i}\frakg'\lon\frakg'$, $ 0\leq i\leq n+1$ is given by
	$$f_i(x_1\wedge\dots\wedge x_{n+1-i}\otimes y_{n+2-i}\wedge \dots\wedge y_{n+1}):=f(x_1\wedge\dots\wedge x_{n+1-i}\wedge y_{n+2-i}\wedge \dots\wedge y_{n+1}),  $$  for $x_1\wedge\dots\wedge x_{n+1-i}\otimes y_{n+2-i}\wedge \dots\wedge y_{n+1}\in  \wedge^{n+1-i}\frakg\ot\wedge^{i}\frakg'$;
 $\iota_\fraka$ identifies $\fraka =\Hom(\bar{\wedge}\frakg,\frakg) $ with the subspace $ \Hom(\bar{\wedge}\frakg,\frakg') $ of
$\frakL$.
Let $P:\frakL\to \fraka$ be the natural projection identifying the subspace $ \Hom(\bar{\wedge}\frakg,\frakg') $ with  $\fraka =\Hom(\bar{\wedge}\frakg,\frakg) $.



\begin{prop}\label{prop: lambda V-data-absolute}
	Let $\frakg$   be a vector space and $\lambda\in\bfk$. The datum  $(\frakL, \frakm,\iota_\frakm, \fraka, \iota_\fraka, P,\Delta=0)$ defined above  is a generalised   V-datum.
\end{prop}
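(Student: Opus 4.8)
The plan is to verify the five axioms of a generalised V-datum one by one, since most of them are structural and follow directly from the definitions of $\frakL$, $\frakm$, $\fraka$, $\iota_\frakm$, $\iota_\fraka$ and $P$ given above. Axiom $(1)$ is immediate: $(\frakL,[-,-]_{\NR})$ is a graded Lie algebra by \cite{NR66}. Axiom $(3)$ is also immediate since $\fraka=\Hom(\bar{\wedge}\frakg,\frakg)$ is endowed with the trivial (abelian) bracket, and $\iota_\fraka$ identifies it with the subspace $\Hom(\bar{\wedge}\frakg,\frakg')$ of $\frakL$; one checks that this subspace is abelian under $[-,-]_{\NR}$ because the $\bar\circ$ operation feeds outputs landing in $\frakg'$ back as inputs, and any element of $\Hom(\bar{\wedge}\frakg,\frakg')$ requires all its arguments in $\frakg$, so $f\bar\circ g=0$ whenever $f,g\in\Hom(\bar{\wedge}\frakg,\frakg')$. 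For axiom $(4)$, $P\circ\iota_\fraka=\Id_\fraka$ is built into the definition of $P$ as the natural projection, and $\Ker(P)$ consists of all multilinear maps on $\frakg\oplus\frakg'$ whose component in $\Hom(\bar{\wedge}\frakg,\frakg')$ vanishes; one must check this is a graded Lie subalgebra, i.e. closed under $[-,-]_{\NR}$. Finally axiom $(5)$ with $\Delta=0$ is trivial: $[\Delta,\Delta]=0$ and $[\Delta,\iota_\frakm(\frakm)]=0\subset\iota_\frakm(\frakm)$.

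The substantive content lies in axiom $(2)$: one must show that $\iota_\frakm\colon\frakm\to\frakL$ is \emph{injective} (clear, since $\iota_\frakm(f)$ recovers $f$ via its top component $f_0$, or more precisely $f$ is recovered from any $f_i$ by the identification of $\wedge^{n+1-i}\frakg\otimes\wedge^i\frakg'$ with $\wedge^{n+1}\frakg$) and, crucially, that $\iota_\frakm$ is a \emph{homomorphism of graded Lie algebras}, i.e. $\iota_\frakm([f,g]_{\NR}^{\frakm})=[\iota_\frakm(f),\iota_\frakm(g)]_{\NR}^{\frakL}$. Here the Nijenhuis--Richardson bracket on $\frakm=\Hom(\bar\wedge\frakg,\frakg)$ is the usual one, while on the right we compute in $\frakL=\Hom(\bar\wedge(\frakg\oplus\frakg'),\frakg\oplus\frakg')$. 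The point is that the "diagonal-type" embedding $f\mapsto\sum_i f_i$, which on each graded piece $\wedge^{n+1-i}\frakg\otimes\wedge^i\frakg'$ just applies $f$ after forgetting the bookkeeping of which copy of $\frakg$ each argument came from and always outputs into $\frakg'$, is compatible with composition: computing $\iota_\frakm(f)\bar\circ\iota_\frakm(g)$ involves a sum over shuffles of arguments split between the $\frakg$-slots and $\frakg'$-slots, and reorganising this sum (using that the output of $\iota_\frakm(g)$ lands in $\frakg'$, hence is fed into a $\frakg'$-slot of $\iota_\frakm(f)$) reproduces exactly the shuffle sum defining $f\bar\circ g$ inside $\frakm$, component by component. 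This is the main obstacle: it is a bookkeeping argument about shuffles and the degree-shift conventions, and one must be careful that the Koszul signs and the $\mathrm{Sh}(q+1,p)$-shuffle sums match on both sides. I would isolate it as the one genuine computation, perhaps stating it as a lemma ("$\iota_\frakm$ is a morphism of graded Lie algebras") and verifying it by evaluating both sides on a generic element $x_1\wedge\cdots\wedge x_{p+q+1}\otimes y_1\wedge\cdots\otimes\cdots$ of a fixed bidegree.

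It remains to record that $\Ker(P)$ is closed under $[-,-]_{\NR}$. An element of $\frakL$ lies in $\Ker(P)$ iff, written in the decomposition $\Hom(\wedge^k\frakg\otimes\wedge^l\frakg',\frakg)\oplus\Hom(\wedge^k\frakg\otimes\wedge^l\frakg',\frakg')$, its $\Hom(\bar\wedge\frakg,\frakg')$-component (i.e. the $l=0$, target-$\frakg'$ part) vanishes. Given $f,g\in\Ker(P)$, one checks that the $\Hom(\bar\wedge\frakg,\frakg')$-component of $f\bar\circ g$ can only be nonzero if the inner map $g$, evaluated on arguments all in $\frakg$, produces an output in $\frakg'$ that is then re-inserted — but for $f\bar\circ g$ to land in $\Hom(\bar\wedge\frakg,\frakg')$ with no $\frakg'$-arguments left over, one is forced into the situation where either $g$ outputs into $\frakg'$ on all-$\frakg$ inputs (forbidden, as $g\in\Ker P$) or $f$ outputs into $\frakg'$ on all-$\frakg$ inputs with the single $\frakg'$-slot filled, which after the shuffle still contributes to a component with a $\frakg'$-argument unless $f$'s relevant component is the forbidden one. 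Making this precise is routine once the grading conventions are fixed. Having checked all five axioms, the datum $(\frakL,\frakm,\iota_\frakm,\fraka,\iota_\fraka,P,\Delta=0)$ is a generalised V-datum, which is the assertion of the proposition; moreover since $P\circ\iota_\frakm=0$ (the image of $\iota_\frakm$ consists of maps with all outputs in $\frakg'$ but always with at least... — in fact $P\circ\iota_\frakm=0$ because $P$ only sees the $l=0$ part landing in $\frakg'$, and on that part $\iota_\frakm(f)$ equals $f$ as a map into $\frakg'$, so actually one should double-check whether $P\circ\iota_\frakm=0$ holds as claimed in the surrounding text or whether it is the $\Delta=0$ hypothesis of Proposition~\ref{Prop: sM+a is L_inf alg} that is being invoked) the subsequent construction of the $L_\infty[1]$-structure on $s\frakm\oplus\fraka$ will follow from Proposition~\ref{Prop: sM+a is L_inf alg}.
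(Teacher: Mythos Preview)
Your approach is correct and matches the paper's: the paper also declares all axioms except (2) to be ``obvious'' and devotes the entire proof to the single claim that $\iota_\frakm$ is a homomorphism of graded Lie algebras, which it verifies by an explicit expansion of $[\iota_\frakm(f),\iota_\frakm(g)]_{\NR}=\big[\sum_i f_i,\sum_j g_j\big]_{\NR}$ into the cross-terms $f_i\bar\circ g_j$ and a regrouping into $\iota_\frakm(f\bar\circ g)-(-1)^{mn}\iota_\frakm(g\bar\circ f)$.

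Your confusion at the end about whether $P\circ\iota_\frakm=0$ stems from an imprecision in the paper's displayed definition of $\iota_\frakm$: although the text literally writes $f_i\colon\wedge^{n+1-i}\frakg\otimes\wedge^i\frakg'\to\frakg'$ for all $0\leq i\leq n+1$, the intended reading (confirmed both by the bracket computation in the proof and by the comparison $f_\frakm=\iota_\frakm$ with $\frakm'=\Hom(\bar\wedge\frakg,\frakg)\oplus\Hom(\bar\wedge\frakg\otimes\bar\wedge\frakh,\frakh)\oplus\Hom(\bar\wedge\frakh,\frakh)$ in Section~5.5.1) is that $f_0$ lands in the \emph{first} copy $\frakg$, while $f_i$ for $i\geq 1$ land in $\frakg'$. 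With that correction the $\Hom(\bar\wedge\frakg,\frakg')$-component of $\iota_\frakm(f)$ is zero, so $P\circ\iota_\frakm=0$ as needed for Proposition~\ref{Prop: sM+a is L_inf alg}. In any case this condition is not part of the generalised V-datum axioms and hence not part of the present proposition.
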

\begin{proof}
Obviously, $\iota_\frakm: \frakm\to \frakL$ and $\iota_\fraka: \fraka\to \frakL$ are  injective.	
The only unclear statement is   that $\iota_\frakm: \frakm\to \frakL$ is a homomorphism  of  graded Lie algebras.
	
	In fact, for arbitrary  $f:\wedge^{n+1}\frakg\lon\frakg$ and $g:\wedge^{m+1}\frakg\lon\frakg$ in $\frakm$, we have
	 \begin{equation*}
		\begin{aligned}
			&[\iota(f),\iota(g)]_{\NR}\\
=&	\bigg[\sum_{i=0}^{n+1}f_i,\sum_{j=0}^{m+1}g_j\bigg]_{\NR}\\
			=&[f_0,g_0]_{\NR}+(-1)^{mn+1}\sum_{j=1}^{m}g_j\bar\circ f_0+\sum_{i=1}^{n}f_i\bar\circ g_0+\sum_{i=1}^{n}\sum_{j=1}^{m}[f_i,g_j]_{\NR}
			+\sum_{i=1}^{n}[f_i,g_{m+1}]_{\NR}\\
		&+\sum_{j=1}^{m}[f_{n+1},g_{j}]_{\NR}+[f_{n+1},g_{m+1}]_{\NR}\\
            =& \left(f_0\bar\circ g_0+\sum_{i=1}^{n}f_i\bar\circ g_0+\sum_{i=1}^{n}\sum_{j=1}^{m}f_i\bar\circ  g_j
			+\sum_{i=1}^{n} f_i\bar\circ g_{m+1}+\sum_{j=1}^{m}f_{n+1}\bar\circ g_{j}++f_{n+1}\bar\circ g_{m+1}\right)-\\
&(-1)^{mn}\left(g_0\bar\circ f_0+\sum_{i=1}^{n}g_0\bar \circ f_i+\sum_{j=1}^{m}g_j\bar\circ f_0+\sum_{i=1}^{n}\sum_{j=1}^{m}g_j\bar\circ  f_i
			+\sum_{j=1}^{m}g_j\bar \circ f_{n+1}+\sum_{i=1}^{n} g_{m+1}\bar\circ f_{i}+g_{m+1}\bar\circ f_{n+1}\right)\\
			=&\iota(f\bar\circ g)-(-1)^{mn} \iota(g\bar\circ f)\\
=&\iota([f,g]_{\NR}).
		\end{aligned}
	\end{equation*}
Thus $\iota_\frakm$ is  a graded Lie algebra homomorphism.
\end{proof}

\begin{lemma}\label{key formula}
	Keep the above notations, for arbitrary  $f=\sum\limits_{i=0}^{n+1}f_i\in\Hom(\wedge^{n+1}\frakg,\frakg)$	and $1\leq r\leq n+1$, take  $\xi_i\in\Hom(\wedge^{m_i+1}\frakg,\frakg')$, $1\leq i\leq r$. We have
	$$\begin{aligned}
		(\cdots((f\bar\circ \xi_1)\bar \circ \xi_2)\cdots)\bar\circ\xi_{r}  =    \sum\limits_{\tau\in\Sh(m_r+1,\dots,m_{1}+1,n+1-r)}(-1)^{\sum\limits_{j=1}^{r}(m_1+\cdots+m_{j-1})m_j}f_r  \big(\xi_{r}\otimes\dots\otimes\xi_1\otimes\Id^{\otimes n+1-r}\big)\tau^{-1},
	\end{aligned}$$
	where,  denote $t=n+\sum\limits_{j=1}^{r}m_j+1$, $\tau^{-1}$ acts on $\wedge^tV$ via $\tau^{-1}(x_1,\dots,x_t):=\sgn(\tau) (x_{\tau(1)},\dots,x_{\tau(t)})$.
\end{lemma}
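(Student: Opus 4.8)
The plan is to establish the formula by induction on $r$, exploiting the very explicit description of the embedding $\iota_\frakm$ and the combinatorics of the Nijenhuis--Richardson circle product $\bar\circ$. The base case $r=1$ is a direct computation: for $f=\sum_{i=0}^{n+1}f_i$ and $\xi_1\in\Hom(\wedge^{m_1+1}\frakg,\frakg')$, one computes $f\bar\circ\xi_1$ using the shuffle formula defining $\bar\circ$. Since $\xi_1$ has image in $\frakg'$, only the components $f_i$ that accept a $\frakg'$-argument in the appropriate slot contribute, and in fact only $f_1$ can absorb the single output of $\xi_1$ (the output lands in $\frakg'$, and $f_1:\wedge^{n}\frakg\ot\frakg'\to\frakg'$ is the unique $f_i$ whose source has exactly one $\frakg'$-factor when we are feeding in $n$ elements of $\frakg$ and one element of $\frakg'$). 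This reproduces the $r=1$ case of the formula with the empty sign $\sum_{j=1}^1(m_1+\cdots+m_{j-1})m_j = 0$ and the shuffle set $\Sh(m_1+1, n)$.

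For the inductive step, I would assume the formula for $r-1$, apply $\bar\circ\xi_r$ to both sides, and again use that $\xi_r$ has image in $\frakg'$. On the right-hand side we already have $f_{r-1}$ composed with $\xi_{r-1}\ot\cdots\ot\xi_1\ot\Id^{\ot n+1-(r-1)}$; inserting $\xi_r$ via $\bar\circ$ means $\xi_r$ must be plugged into one of the $\frakg$-slots of this composite (there are $n+1-(r-1) = n+2-r$ of them, coming from the $\Id^{\ot n+2-r}$ block, since the slots occupied by the previous $\xi_j$ already take $\frakg'$-inputs and cannot receive another $\frakg'$-valued map). Plugging into any one of these slots and re-indexing lands $\xi_r$ immediately after the block $\xi_{r-1}\ot\cdots\ot\xi_1$ and turns $f_{r-1}$ into $f_r$ (one more $\frakg'$-argument absorbed), producing $f_r(\xi_r\ot\cdots\ot\xi_1\ot\Id^{\ot n+1-r})$; the $n+2-r$ choices of slot get absorbed into the passage from the shuffle set $\Sh(m_{r-1}+1,\dots,m_1+1,n+2-r)$ to $\Sh(m_r+1,m_{r-1}+1,\dots,m_1+1,n+1-r)$, which is the standard associativity-of-shuffles bookkeeping. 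The only remaining point is to track the Koszul/permutation signs: the new factor of $(-1)^{(m_1+\cdots+m_{r-1})m_r}$ in the exponent comes from moving $\xi_r$ (of degree $m_r$) past the already-assembled block $\xi_{r-1}\ot\cdots\ot\xi_1$ (total degree $m_1+\cdots+m_{r-1}$), together with the $\sgn(\tau)$ contributions repackaged into a single $\tau^{-1}$ acting on $\wedge^t V$.

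The main obstacle I expect is precisely the sign and shuffle bookkeeping in the inductive step: one must carefully reconcile three sources of signs — the Koszul signs in the definition of $\bar\circ$ (which involve $\sgn(\sigma)$ of the shuffles), the signs $\sgn(\tau)$ inherited from the induction hypothesis, and the sign produced by commuting the degree-$m_r$ map $\xi_r$ to its final position — and verify they collapse to the clean exponent $\sum_{j=1}^r(m_1+\cdots+m_{j-1})m_j$ and a single $\tau^{-1}$. A clean way to organise this is to first prove a "one-step insertion" sublemma describing $\big(f_r(\xi_{r-1}\ot\cdots\ot\xi_1\ot\Id^{\ot})\big)\bar\circ\xi_r$ in closed form, and only afterwards run the induction; this isolates the combinatorial core (composition of shuffles, i.e.\ $\Sh(a,b)\circ(\Sh(c,d)\times\{\Id\})$ versus $\Sh(c,a,\ldots)$) from the degree-counting and makes the sign computation a routine, if tedious, verification. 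Everything else — the fact that only $f_r$ survives at stage $r$, and that the $\xi_j$'s must land consecutively because $\frakg'$ is "closed" under these maps — follows mechanically from the definitions of $\iota_\frakm$, $\iota_\fraka$ and $P$ given just above.
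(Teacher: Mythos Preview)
Your inductive strategy has a genuine gap at the inductive step. The lemma computes only the pure-$\frakg$-input component of the iterated $\bar\circ$ (equivalently, its image under $P$), and at stage $r-1$ that component involves $f_{r-1}$: after $r-1$ insertions of $\frakg'$-valued maps, the summand of $\iota_\frakm(f)$ that has no $\frakg'$-slots left is exactly $f_{r-1}$. But when you then apply $\bar\circ\,\xi_r$, the output of $\xi_r$ lies in $\frakg'$ and must land in a $\frakg'$-slot of the stage-$(r-1)$ composite. The $f_{r-1}$-component has no such slot, so $\bar\circ\,\xi_r$ applied to it is zero; it does not ``turn $f_{r-1}$ into $f_r$''. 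The term that actually feeds the stage-$r$ formula is the component of the $(r-1)$-fold composite with \emph{one} remaining $\frakg'$-slot, which comes from $f_r$, and your inductive hypothesis does not track it. In other words, the information recorded by the lemma at stage $r-1$ is strictly weaker than what the inductive step requires.

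The paper sidesteps this by a direct (non-inductive) computation: since the final expression is evaluated on inputs $x_1,\dots,x_t\in\frakg$, one sees at once that only $f_r$ can contribute (after $r$ insertions of $\frakg'$-valued outputs), and then one unwinds all $r$ applications of $\bar\circ$ simultaneously, producing a nested family of shuffles $\sigma_1,\dots,\sigma_r$ which are finally repackaged into a single $\tau\in\Sh(m_r+1,\dots,m_1+1,n+1-r)$. Your approach can be repaired, but only by strengthening the inductive statement: for fixed $k$, prove by induction on $s\le k$ a closed formula for $(\cdots((f_k\bar\circ\xi_1)\cdots)\bar\circ\xi_s$ as an element of $\Hom(\wedge^{?}\frakg\otimes\wedge^{k-s}\frakg',\frakg')$, and then specialise to $k=s=r$. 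That stronger statement \emph{does} carry enough data through the step, and your ``one-step insertion'' sublemma is precisely what is needed to run it.
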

\begin{proof}   In fact, by  definition of $\bar\circ$, we have
	\begin{eqnarray*}
		& &\quad\big((\cdots((f\bar\circ \xi_1)\bar \circ \xi_2)\cdots)\bar\circ\xi_{r}\big)(x_1,\dots,x_t)\\
		& =&\sum_{\sigma_1\in\Sh(m_{r}+1,t-1-m_{r})}\sgn(\sigma_1)\big((\cdots(f_{r}\bar\circ \xi_1)\cdots)\bar\circ\xi_{r-1}\big)\Big(\xi_{r}(x_{\sigma_1(1)},\dots, x_{\sigma_1(m_{r}+1)}),x_{\sigma_1(m_{r}+2)},\dots,x_{\sigma_1(t)}\Big)\\
		& =&\sum_{\substack{\sigma_1\in\Sh(m_{r}+1,t-1-m_{r})\\\sigma_2\in\Sh(m_{r-1}+1,t-1-m_{r}-m_{r-1})}}
		\sgn(\sigma_2)	\sgn(\sigma_1)\\
		& &\big((\cdots(f_{r}\bar\circ \xi_1)\cdots)\bar\circ\xi_{r-2}\big)\Big(\xi_{r-1}(x_{\sigma_1(m_{r}+\sigma_2(1))},\dots,x_{\sigma_1(m_{r}+\sigma_2(m_{r-1}+1))}),
		\xi_{r}(x_{\sigma_1(1)},\dots,x_{\sigma_1(m_{r}+1)}),\\
		&&x_{\sigma_1(m_{r}+\sigma_2(m_{r-1}+3))},\dots,x_{\sigma_1(m_{r}+\sigma_2(t-m_{r}))}\Big)\\
		&=&\cdots\cdots\cdots\\
		& =&\sum_{\begin{subarray}{c}		
				\sigma_1\in\Sh(m_{r}+1,t-1-m_{r}) \\
				\sigma_2\in\Sh(m_{r-1}+1,t-1-m_{r}-m_{r-1})\\
				\dots\\
				\sigma_{r}\in\Sh(m_{1}+1,n)
		\end{subarray}}
		\sgn(\sigma_{r})\cdots\sgn(\sigma_2)\sgn(\sigma_1)\\ &&f_{r}\Big(\xi_{1}(x_{\sigma_1(m_{r}+\sigma_2(m_{r-1}+\dots+\sigma_{r-1}(m_2+\sigma_{r}(1))\cdots))},\dots, x_{\sigma_1(m_{r}+\sigma_2(m_{r-1}+\dots+\sigma_{r-1}(m_2+\sigma_{r}(m_1+1))\cdots))}),\dots,\\
		& &\xi_{r-1}(x_{\sigma_1(m_{r}+\sigma_2(1))},\dots,x_{\sigma_1(m_{r}+\sigma_2(m_{r-1}+1))}),\xi_{r}(x_{\sigma_1(1)},\dots,x_{\sigma_1(m_{r}+1)}),\\
		&&x_{\sigma_1(m_{r}+\sigma_2(m_{r-1}+\dots+\sigma_{r-1}(m_2+\sigma_{r}(m_1+r+1))\cdots))},\dots, x_{\sigma_1(m_{r}+\sigma_2(m_{r-1}+\dots+\sigma_{r-1}(m_2+\sigma_{r}(m_1+n+1))\cdots))}\Big)\\
		& =&\sum_{\begin{subarray}{c}		
				\sigma_1\in\Sh(m_{r}+1,t-1-m_{r}) \\
				\sigma_2\in\Sh(m_{r-1}+1,t-2-m_{r}-m_{r-1})\\
				\dots\\
				\sigma_{r}\in\Sh(m_{1}+1,n)
		\end{subarray}}
		(-1)^{\frac{r(r-1)}{2}   }\sgn(\sigma_1)\sgn(\sigma_2)\cdots\sgn(\sigma_{r})\\
		& &f_{r} \Big(\xi_{r}(x_{\sigma_1(1)},\dots,x_{\sigma_1(m_{r}+1)}),\xi_{r-1}(x_{\sigma_1(m_{r}+\sigma_2(1))},\dots,x_{\sigma_1(m_{r}+\sigma_2(m_{r-1}+1))}),\dots,\\
		&&\xi_{1}(x_{\sigma_1(m_{r}+\sigma_2(m_{r-1}+\dots+\sigma_{r-1}(m_2+\sigma_{r}(1))\cdots))},\dots, x_{\sigma_1(m_{r}+\sigma_2(m_{r-1}+\dots+\sigma_{r-1}(m_2+\sigma_{r}(m_1+1))\cdots))}),\\
		& &x_{\sigma_1(m_{r}+\sigma_2(m_{r-1}+\dots+\sigma_{r-1}(m_2+\sigma_{r}(m_1+r+1))\cdots))},\dots, x_{\sigma_1(m_{r}+\sigma_2(m_{r-1}+\dots+\sigma_{r-1}(m_2+\sigma_{r}(m_1+n+1))\cdots))}\Big)\\
		& =&\sum_{\begin{subarray}{c}		
				\sigma_1\in\Sh(m_{r}+1,t-1-m_{r}) \\
				\sigma_2\in\Sh(m_{r-1}+1,t-2-m_{r}-m_{r-1})\\
				\dots\\
				\sigma_{r}\in\Sh(m_{1}+1,n)
		\end{subarray}}
		(-1)^{\frac{r(r-1)}{2}+\kappa   }\sgn(\sigma_1)\sgn(\sigma_2)\cdots\sgn(\sigma_{r})f_r  \big(\xi_{r}\otimes\dots\otimes\xi_1\otimes\Id^{\otimes n+1-r}\big)\\
		& & \Big(x_{\sigma_1(1)},\dots,x_{\sigma_1(m_{r}+1)}|, x_{\sigma_1(m_{r}+\sigma_2(1))},\dots,x_{\sigma_1(m_{r}+\sigma_2(m_{r-1}+1))}|,\dots,\\
		&&|x_{\sigma_1(m_{r}+\sigma_2(m_{r-1}+\dots+\sigma_{r-1}(m_2+\sigma_{r}(1))\cdots))},\dots, x_{\sigma_1(m_{r}+\sigma_2(m_{r-1}+\dots+\sigma_{r-1}(m_2+\sigma_{r}(m_1+1))\cdots))}|,\\
		& &|x_{\sigma_1(m_{r}+\sigma_2(m_{r-1}+\dots+\sigma_{r-1}(m_2+\sigma_{r}(m_1+r+1))\cdots))},\dots, x_{\sigma_1(m_{r}+\sigma_2(m_{r-1}+\dots+\sigma_{r-1}(m_2+\sigma_{r}(m_1+n+1))\cdots))}\Big)\\
		&&(\mbox{where $\kappa$ is given by Koszul sign})\\
		&=&\sum\limits_{\tau\in\Sh(m_r+1,\dots,m_{1}+1,n+1-r)}(-1)^{\sum\limits_{j=1}^{r}(m_1+\cdots+m_{j-1})m_j}f_r  \big(\xi_{r}\otimes\dots\otimes\xi_1\otimes\Id^{\otimes n+1-r}\big)\tau^{-1}(x_1,\dots,x_t).\end{eqnarray*}
\end{proof}

By Proposition~\ref{Prop: sM+a is L_inf alg}, we have:
\begin{prop}\label{prop: Linfinity for absolute}
	Keep the above notations, there exists an $L_\infty[1]$-algebra structure on  $s \mathfrak{M}\oplus\fraka$, where $l_i$ are given by
		$$
		l_2(sf,sg)      =    (-1)^{|f|} s[f,g]_{\NR}, \ \
		l_2(sf,\xi)     =     [f, \xi]_{\NR}, $$
and for $3\leq i\leq n+2$,
		$$l_i(sf,\xi_1,\cdots,\xi_{i-1})      =   \sum\limits_{\tau\in\Sh(m_{i-1}+1,\dots,m_1+1,n+2-i)}(-1)^{\sum\limits_{j=1}^{i-1}(m_1+\cdots+m_{j-1})m_j}\lambda^{i-2} f  \big(\xi_{i-1}\otimes\dots\otimes\xi_1\otimes\Id^{\otimes n+2-i}\big)\tau^{-1},
	 $$
	for homogeneous elements
	$f\in\Hom(\wedge^{n+1}\frakg,\frakg)\subseteq \frakm$, $g\in\Hom(\wedge^{m+1}\frakg,\frakg)\subseteq \frakm$, $\xi\in \Hom(\wedge^{m+1}\frakg,\frakg)\subseteq \fraka$, 	and $\xi_j\in\Hom(\wedge^{m_j+1}\frakg,\frakg)\subseteq \fraka$, $1\leq j\leq i-1$, and all others components vanish.
\end{prop}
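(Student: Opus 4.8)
The plan is to read the statement off the abstract construction in three steps, the first two being bookkeeping and the third an appeal to Lemma~\ref{key formula}.

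\emph{Step 1 (reduce to Proposition~\ref{Prop: sM+a is L_inf alg}).} By Proposition~\ref{prop: lambda V-data-absolute} the septuple $(\frakL,\frakm,\iota_\frakm,\fraka,\iota_\fraka,P,\Delta=0)$ is a generalised V-datum, so it only remains to check the two extra hypotheses of Proposition~\ref{Prop: sM+a is L_inf alg}. One of them, $\Delta=0$, holds by construction; the other, $P\circ\iota_\frakm=0$, I would verify directly from the definitions, since every component of $\iota_\frakm(f)$ either keeps its output in the first copy $\frakg$ or takes at least one of its arguments from the second copy $\frakg'$, whereas $P$ projects onto those maps that take all their arguments from $\frakg$ and have output in $\frakg'$. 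Proposition~\ref{Prop: sM+a is L_inf alg} then produces an $L_\infty[1]$-structure on $s\frakm\oplus\fraka$ with
\[
l_2(sf,sg)=(-1)^{|f|}s[f,g]_{\frakm},\qquad l_i(sf,\xi_1,\dots,\xi_{i-1})=\lambda^{i-2}\,P[\cdots[\iota_\frakm(f),\iota_\fraka(\xi_1)]_{\NR},\cdots,\iota_\fraka(\xi_{i-1})]_{\NR},\quad i\ge 2,
\]
and all other structure maps zero. Since the bracket of $\frakm$ is the restriction of the Nijenhuis--Richardson bracket, the first identity already reads $l_2(sf,sg)=(-1)^{|f|}s[f,g]_{\NR}$, so the whole content of the proposition is the explicit evaluation of the iterated bracket in $l_i$.

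\emph{Step 2 (collapse the iterated bracket).} The crucial observation is that each $\iota_\fraka(\xi_j)$ lies in $\Hom(\bar{\wedge}\frakg,\frakg')\subseteq\frakL$, i.e.\ it takes all its arguments from $\frakg$ and has output in $\frakg'$. Hence in $[-,\iota_\fraka(\xi_j)]_{\NR}=(-)\bar\circ\iota_\fraka(\xi_j)-(\pm)\,\iota_\fraka(\xi_j)\bar\circ(-)$ the second summand is nonzero only on arguments carrying a component with output in $\frakg$; composed further with the remaining $\iota_\fraka(\xi_k)$ such a component never gains an output in $\frakg'$, so it is annihilated by $P$, and since the $\bar\circ$-product of any two maps that both take all arguments from $\frakg$ and have output in $\frakg'$ vanishes for type reasons, this branch disappears entirely after one more bracketing. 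Following this through the expansion I would obtain, for $i\ge 3$,
\[
P[\cdots[\iota_\frakm(f),\iota_\fraka(\xi_1)]_{\NR},\cdots,\iota_\fraka(\xi_{i-1})]_{\NR}=P\big((\cdots(\iota_\frakm(f)\bar\circ\iota_\fraka(\xi_1))\bar\circ\cdots)\bar\circ\iota_\fraka(\xi_{i-1})\big),
\]
with no surviving antisymmetrisation signs, while for $i=2$ both terms of $[\iota_\frakm(f),\iota_\fraka(\xi)]_{\NR}$ are kept and one gets $l_2(sf,\xi)=P[\iota_\frakm(f),\iota_\fraka(\xi)]_{\NR}=f\bar\circ\xi-(-1)^{|f||\xi|}\xi\bar\circ f=[f,\xi]_{\NR}$ in $\fraka$.

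\emph{Step 3 (apply Lemma~\ref{key formula}).} Write $\iota_\frakm(f)=\sum_i f_i$, where $f_i$ has precisely $i$ arguments in the second copy $\frakg'$. Each $\iota_\fraka(\xi_j)$ can only be inserted into a $\frakg'$-argument, and $P$ forces the number of remaining $\frakg'$-arguments to vanish, so after inserting $\xi_1,\dots,\xi_{i-1}$ only the component $f_{i-1}$ contributes; this is exactly the situation handled by Lemma~\ref{key formula} with $r=i-1$, which evaluates the left-nested composition as
\[
(\cdots(\iota_\frakm(f)\bar\circ\iota_\fraka(\xi_1))\bar\circ\cdots)\bar\circ\iota_\fraka(\xi_{i-1})=\sum_{\tau\in\Sh(m_{i-1}+1,\dots,m_1+1,n+2-i)}(-1)^{\sum_{j=1}^{i-1}(m_1+\cdots+m_{j-1})m_j}\,f\big(\xi_{i-1}\otimes\cdots\otimes\xi_1\otimes\Id^{\otimes n+2-i}\big)\tau^{-1},
\]
an element of $\Hom(\bar{\wedge}\frakg,\frakg')$ on which $P$ acts as the identity after identifying $\frakg'\cong\frakg$. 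Multiplying by $\lambda^{i-2}$ gives the claimed formula for $l_i$ in the range $3\le i\le n+2$; for $i>n+2$ the shuffle index set is empty, consistent with $l_i=0$. Assembling the three steps proves the proposition. The main obstacle is Step~2: the delicate point is to argue rigorously that \emph{all} ``wrong-order'' terms generated by nesting the Nijenhuis--Richardson brackets are either killed by $P$ or vanish identically for type reasons, and in parallel that the Koszul and suspension signs reassemble into exactly $(-1)^{\sum_{j}(m_1+\cdots+m_{j-1})m_j}$ without stray factors; once that is pinned down, Lemma~\ref{key formula} supplies the remaining combinatorics and Step~3 is a bare identification.
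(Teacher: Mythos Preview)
Your proposal is correct and follows essentially the same approach as the paper: reduce to Proposition~\ref{Prop: sM+a is L_inf alg} via Proposition~\ref{prop: lambda V-data-absolute}, collapse the iterated Nijenhuis--Richardson bracket by the type-checking argument on inputs/outputs in $\frakg$ versus $\frakg'$, and then invoke Lemma~\ref{key formula}. The paper's proof carries out Step~2 by explicitly tracking which components $f_j$ survive each successive bracket (rather than your more verbal type argument), but the logic is identical, and your explicit verification of $P\circ\iota_\frakm=0$ is a welcome addition that the paper leaves implicit.
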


\begin{proof}
  The assertion follows from   Proposition~\ref{prop: lambda V-data-absolute},     Proposition~\ref{Prop: sM+a is L_inf alg},  and  Lemma \ref{key formula}. Indeed, by Propositions~\ref{prop: lambda V-data-absolute} and \ref{Prop: sM+a is L_inf alg},
  $$\begin{array}{rcl}
  	l_2(sf,sg)   &  =    &(-1)^{|f|} s [f,g]_{\NR}, \\
  	l_i(sf,\xi_1,\cdots,\xi_{i-1})   &  =    &\lambda^{i-2} P[\cdots[\iota_\mathfrak{M}(f),\iota_\fraka(\xi_1)]_{\NR},\cdots,\iota_\fraka{(\xi_{i-1})}]_{\NR},  i\geq2.
  \end{array}
$$
On one hand, for $i=2$, we have $$\begin{aligned}
l_2(sf,\xi)&=P[\iota_\frakm(f),\iota_\fraka(\xi)]_{\NR}\\
&=P\bigg(\sum_{i=0}^{n+1}f_i\bar\circ\iota_\fraka(\xi)-(-1)^{|f||\xi|}\iota_\fraka(\xi)\bar\circ\sum_{i=0}^{n+1}f_i\bigg) \\
&=P(\sum_{i=1}^{n+1} f_i\bar\circ \iota_\fraka(\xi)-(-1)^{|f||\xi|}\iota_\fraka(\xi)\bar\circ f_0\bigg)\\
&=f\bar\circ \xi-(-1)^{|f||\xi|}\xi\bar\circ f\\
&=[f, \xi]_{\NR}.
\end{aligned}$$
On the other hand, for $i\geq3$, we have
$$\begin{aligned}
	&\quad P[\cdots[\iota_\frakm(f),\iota_\fraka(\xi_1)]_{\NR},\dots,\iota_\fraka(\xi_{i-1})]_{\NR}\\
	&=P[\cdots[\sum_{j=1}^{n+1}f_{j}\bar\circ \iota_\fraka(\xi_1)-(-1)^{|f||\xi_1|}\iota_\fraka(\xi_1)\bar\circ f_0,\iota_\fraka(\xi_2)]_{\NR},\dots,\iota_\fraka(\xi_{i-1})]_{\NR}\\
	&=P[\cdots[\sum_{j=2}^{n+1}(f_{j}\bar\circ \iota_\fraka(\xi_1))\bar\circ\iota_\fraka(\xi_2), \iota_\fraka(\xi_3)]_{\NR},\dots,\iota_\fraka(\xi_{i-1})]_{\NR}\\
	&=P\big((\cdots\sum_{j=i-1}^{n+1}(f_{j}\bar\circ \iota_\fraka(\xi_1))\cdots)\bar\circ\iota_\fraka(\xi_{i-1})\big)\\
	&=P\big((\cdots(f_{i-1}\bar\circ \iota_\fraka(\xi_1))\bar\circ\cdots)\bar\circ\iota_\fraka(\xi_{i-1})\big)\\
	&=\big(\cdots(f\bar\circ \xi_1)\bar\circ\cdots\big)\bar\circ \xi_{i-1}\\
&=\sum\limits_{\tau\in\Sh(m_{i-1}+1,\dots,m_1+1,n+2-i)}(-1)^{\sum\limits_{j=1}^{i-1}(m_1+\cdots+m_{j-1})m_j}\lambda^{i-2} f  \big(\xi_{i-1}\otimes\dots\otimes\xi_1\otimes\Id^{\otimes n+2-i}\big)\tau^{-1},
\end{aligned}$$
where the last equality follows from Lemma \ref{key formula}.
\end{proof}

\begin{exam}\label{example spcial case} Consider a special case where $n=1, i=3, t=2, m_1=m_2=0$.
	For homogeneous elements
	$\pi\in\Hom(\wedge^{2}\frakg,\frakg)$, $\xi_1=\xi_2=D\in\Hom(\frakg,\frakg)$,    we have
	\begin{equation*}
		\begin{aligned}
	l_3(s\pi,D,D)(x,y)		
			&=\sum_{\tau\in\Sh(1,1)}\lambda \pi  (D\otimes D\big)~\tau^{-1}(x,y)  \\
			&=\lambda\pi(Dx,Dy)-\lambda\pi(Dy,Dx)\\
			&=2\lambda\pi(Dx,Dy).
		\end{aligned}
	\end{equation*}
\end{exam}

\begin{theorem}\label{Thm: MC elements in absolute Linifnity}
	Let $\frakg$ be a vector space.  Let $\pi\in\mathrm{Hom}(\wedge^2\frakg,\frakg)$, $D\in\mathrm{Hom}(\frakg,\mathfrak{g})$, then
	$(s \pi,D)\in \mathcal{MC}(s \mathfrak{M}\oplus\fraka)$ if and only if $(\frakg,\pi,D)$ is  a differential Lie algebra  of weight  $\lambda\in\bfk$.
\end{theorem}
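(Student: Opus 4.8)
The plan is to feed $(s\pi,D)$ into the Maurer--Cartan characterisation of Proposition~\ref{Prop: sM+a is L_inf alg}, using the explicit higher brackets recorded in Proposition~\ref{prop: Linfinity for absolute} and Example~\ref{example spcial case}, and then to recognise the two resulting equations as the two defining identities of a differential Lie algebra of weight $\lambda$.

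First, since $\pi\in\frakm^{1}=\Hom(\wedge^{2}\frakg,\frakg)$ and $D\in\fraka^{0}=\Hom(\frakg,\frakg)$ have the degrees required there, Proposition~\ref{Prop: sM+a is L_inf alg} tells us that $(s\pi,D)\in\mathcal{MC}(s\frakm\oplus\fraka)$ if and only if $\pi\in\mathcal{MC}(\frakm)$ and
\[
\sum_{n=2}^{\infty}\frac{\lambda^{\,n-2}}{(n-1)!}\,P\big[\cdots[\iota_{\frakm}(\pi),\underbrace{\iota_{\fraka}(D)]_{\NR},\cdots,\iota_{\fraka}(D)}_{(n-1)\ \mathrm{times}}\big]_{\NR}=0 .
\]
For the first condition I would invoke the classical Nijenhuis--Richardson fact that $[\pi,\pi]_{\NR}=0$ is equivalent to the Jacobi identity for $\pi$, i.e. to $(\frakg,\pi)$ being a Lie algebra. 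It then remains to analyse the displayed sum.

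The key simplification is that $\pi$ has arity two, i.e. it is the case $n=1$ of Proposition~\ref{prop: Linfinity for absolute}, where the bracket $l_{i}(s\pi,\xi_{1},\dots,\xi_{i-1})$ is nonzero only for $i\leq n+2=3$ (this is exactly where Lemma~\ref{key formula} enters, via the component $\pi_{r}$ of $\iota_{\frakm}(\pi)$ vanishing for $r\geq3$). Hence $l_{i}(s\pi,D,\dots,D)=0$ for all $i\geq4$, the Maurer--Cartan sum terminates, and a short expansion of $\sum_{n\geq1}\frac1{n!}\,l_{n}\big((s\pi,D)^{\otimes n}\big)$ using the formulas of Proposition~\ref{prop: Linfinity for absolute} reduces the Maurer--Cartan equation for $(s\pi,D)$ to
\[
0=-\tfrac12\,s[\pi,\pi]_{\NR}\;+\;l_{2}(s\pi,D)\;+\;\tfrac12\,l_{3}(s\pi,D,D);
\]
its two summands lie in the complementary summands $s\frakm$ and $\fraka$, so the equation is equivalent to $[\pi,\pi]_{\NR}=0$ together with $l_{2}(s\pi,D)+\tfrac12 l_{3}(s\pi,D,D)=0$ in $\fraka=\Hom(\wedge^{2}\frakg,\frakg)$.

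To finish I would evaluate the two surviving brackets on $x,y\in\frakg$. By Proposition~\ref{prop: Linfinity for absolute}, $l_{2}(s\pi,D)=[\pi,D]_{\NR}=\pi\,\bar\circ\,D-D\,\bar\circ\,\pi$, and a short $(1,1)$-shuffle computation together with the antisymmetry of $\pi$ yields $l_{2}(s\pi,D)(x,y)=[Dx,y]+[x,Dy]-D([x,y])$; by Example~\ref{example spcial case}, $l_{3}(s\pi,D,D)(x,y)=2\lambda[Dx,Dy]$. Hence the second equation reads
\[
D([x,y])=[Dx,y]+[x,Dy]+\lambda[Dx,Dy]\qquad\text{for all }x,y\in\frakg,
\]
which is exactly Equation~\eqref{Eq: diff}, i.e. $D$ is a derivation of weight $\lambda$. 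Combined with $[\pi,\pi]_{\NR}=0$, this says precisely that $(\frakg,\pi,D)$ is a differential Lie algebra of weight $\lambda$, and conversely any such structure visibly produces a Maurer--Cartan element. I expect the truncation step to be the only genuinely delicate point — one must check carefully, via Lemma~\ref{key formula}, that all $l_{i}(s\pi,D,\dots,D)$ with $i\geq4$ vanish so that the infinite Maurer--Cartan sum really terminates — while the bracket evaluations are routine and parallel the relative case treated just above.
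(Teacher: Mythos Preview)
Your proposal is correct and follows essentially the same approach as the paper's own proof: both invoke Proposition~\ref{Prop: sM+a is L_inf alg} to split the Maurer--Cartan condition into $[\pi,\pi]_{\NR}=0$ plus the vanishing of $l_{2}(s\pi,D)+\tfrac12\,l_{3}(s\pi,D,D)$, then use Example~\ref{example spcial case} and the Nijenhuis--Richardson computation of $l_{2}(s\pi,D)=[\pi,D]_{\NR}$ to identify the latter with Eq.~\eqref{Eq: diff}. Your handling of the truncation (via the arity bound $i\leq n+2=3$ in Proposition~\ref{prop: Linfinity for absolute}) and your coefficient $\tfrac12$ in front of $l_{3}$ are in fact cleaner than the paper's printed $\tfrac{\lambda}{2}$, which appears to double-count the $\lambda$ already built into $l_{3}$.
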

\begin{proof} By Proposition~\ref{Prop: sM+a is L_inf alg},
	 $(s\pi,D) \in\mathcal{MC}(s \mathfrak{M}\oplus\fraka)$ if and only if
$[\pi,\pi]_{\NR}=0$ (that is, $\pi$ is a Lie bracket) and
	\begin{equation*}
		\begin{aligned}
			0 &= \sum_{k=2}^\infty\frac{1}{(k-1)!}l_k(s\pi,\underbrace{D,\dots,D}_{(k-1)\ \mathrm{times}})  \\
			&=  l_2(s\pi,D)+\frac{\lambda}{2}l_3(s\pi,D,D)\\
			&=  \pi\bar\circ D-D\bar\circ \pi+\frac{\lambda}{2}l_3(s\pi,D,D).
		\end{aligned}
	\end{equation*}
	For arbitrary  $(x,y)\in\wedge^2\frakg$, we have
	\begin{equation*}
		\begin{aligned}
			0&=	\big(\pi\bar\circ D-D\bar\circ \pi+\frac{\lambda}{2}l_3(s\pi,D,D)\big)(x,y)\\
			&=\pi\bar\circ D(x,y)-D\bar\circ \pi(x,y)+\lambda\pi(Dx,Dy)\quad  (\mbox{by Example}~\ref{example spcial case})\\
			&=\pi( D(x),y)-\pi( D(y),x)-D([x,y])+\lambda[D(x),D(y)]\\
			&=[D(x),y]-[D(y),x]-D([x,y])+\lambda[D(x),D(y)]\\
			&=[D(x),y]+[x,D(y)]-D([x,y])+\lambda[D(x),D(y)].
		\end{aligned}
	\end{equation*}
	Hence $D$ is a differential operator of weight  $\lambda$.
\end{proof}

\medskip

\subsection{Equivalences between $L_\infty[1]$-structures for absolute and relative differential Lie algebras with weight}\
\label{Subsect: Relative vs absolute}

\subsubsection{From relative to absolute}\

Let $(\frakg, \mu, \dd)$ be a differential Lie algebra of weight $\lambda$.

Let $\frakh=\frakg$, $\rho: \frakg\to \mathrm{Der}(\frakh)$ be the adjoint representation.
Consider $\dd$ as a map $\dd: \frakg\to \frakh$.
Then $(\frakg, \frakh, \rho, \dd)$ is a relative differential Lie algebra of weight $\lambda$.
As seen in Subsection~\ref{Subsect: Linifnity structure for relative  differential Lie algebras},  let
$$\begin{array}{rcl}{\frakL'}&=&\mathrm{Hom}(\bar{\wedge}(\frakg\oplus\frakh),\frakg\oplus\frakh),\\ {\frakm'}&=&\mathrm{Hom}(\bar{\wedge}\frakg,\frakg)\oplus\mathrm{Hom}(\bar{\wedge}\frakg\otimes\bar\wedge\frakh,\frakh)
	\oplus\mathrm{Hom}(\bar{\wedge}\frakh,\frakh),\\
  {\fraka'}&=&\mathrm{Hom}(\bar{\wedge}\frakg,\frakh). \end{array}$$
 Denote  $\iota_{\fraka'}:{\fraka'}\to {\frakL'}$ and $\iota_{\frakm'}:{\frakm'}\to {\frakL'}$ to be the  natural injection
		and  $P:{\frakL'}\lon{\fraka'}$ to  be the natural surjection.
Then $({\frakL'}, {\frakm'}, \iota_{\frakm'}, {\fraka'},\iota_{\fraka'}, P, \Delta'=0)$ is a generalised V-datum.

Recall that  in Subsection~\ref{Subsect:  Linfinty for differential Lie algebras}, let $\frakg'= \frakg$   and let $$\begin{array}{rcl}\frakL&=&\mathrm{Hom}(\bar{\wedge}(\frakg\oplus\mathfrak{g}'),\frakg\oplus\mathfrak{g}'),\\
   \frakm&=&\mathrm{Hom}(\bar{\wedge}\frakg,\frakg),\\
 \fraka&=&\Hom(\bar{\wedge}\frakg,\frakg).\end{array}$$
Let two linear maps $\iota_\frakm: \frakm\to \frakL$ and $\iota_\fraka: \fraka\to \frakL$ be defined as follows:
For given $f:\wedge^{n+1}\frakg\lon\frakg \in \frakm$,
	 $\iota_\frakm(f)=\sum\limits_{i=0}^{n+1}f_i$
	 where   $f_i:\wedge^{n+1-i}\frakg\ot\wedge^{i}\frakg'\lon\frakg'$, $ 0\leq i\leq n+1$ is given by
	$$f_i(x_1\wedge\dots\wedge x_{n+1-i}\otimes y_{n+2-i}\wedge \dots\wedge y_{n+1}):=f(x_1\wedge\dots\wedge x_{n+1-i}\wedge y_{n+2-i}\wedge \dots\wedge y_{n+1}),  $$  for $x_1\wedge\dots\wedge x_{n+1-i}\otimes y_{n+2-i}\wedge \dots\wedge y_{n+1}\in  \wedge^{n+1-i}\frakg\ot\wedge^{i}\frakg'$;
 $\iota_\fraka$ identifies $\fraka =\Hom(\bar{\wedge}\frakg,\frakg) $ with the subspace $ \Hom(\bar{\wedge}\frakg,\frakg') $ of
$\frakL$.
Let $P:\frakL\to \fraka$ be the natural projection identifying the subspace $ \Hom(\bar{\wedge}\frakg,\frakg') $ with  $\fraka =\Hom(\bar{\wedge}\frakg,\frakg) $.
By Proposition~\ref{prop: lambda V-data-absolute},  $(\frakL, \frakm,\iota_\frakm, \fraka, \iota_\fraka, P,\Delta=0)$    is a generalised   V-datum.

By identifying $\frakg'$ with $\frakh$, let $f_\frakL: \frakL\to \frakL'$ and $f_\fraka: \fraka\to\mathfrak{A'}$   be the identity maps,
 let $f_\frakm: \frakm\to\mathfrak{M'}$ be exactly defined as $\iota_\frakm$.

 The following result is clear.
 \begin{prop}The triple $f=(f_\frakL, f_\frakm, f_\fraka)$ is a morphism of generalised V-data from $(\frakL, \frakm,\iota_\frakm, \fraka,$ $ \iota_\fraka, P,\Delta=0)$ to $({\frakL'}, {\frakm'}, \iota_{\frakm'}, {\fraka'},\iota_{\fraka'}, P, \Delta'=0)$. It induces an injective homomorphism of $L_\infty[1]$-algebras from $s \mathfrak{M}\oplus\fraka$ introduced in Proposition~\ref{prop: Linfinity for absolute} to $s \mathfrak{M}'\oplus\fraka'$ introduced in Proposition~\ref{prop: V-data for relative and L-infinity}.

 \end{prop}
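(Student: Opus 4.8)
The plan is to verify directly that $f=(f_\frakL,f_\frakm,f_\fraka)$ satisfies the five defining conditions of a morphism of generalised V-data, and then to invoke Proposition~\ref{prop:L_infty morphism}, deducing injectivity of the induced $L_\infty[1]$-morphism from injectivity of its components. Under the identification $\frakg'=\frakh$ the graded Lie algebra $\frakL=\mathrm{Hom}(\bar\wedge(\frakg\oplus\frakg'),\frakg\oplus\frakg')$ literally coincides with $\frakL'=\mathrm{Hom}(\bar\wedge(\frakg\oplus\frakh),\frakg\oplus\frakh)$, together with the Nijenhuis--Richardson bracket, so $f_\frakL=\Id$ is an isomorphism of graded Lie algebras; likewise $\fraka=\mathrm{Hom}(\bar\wedge\frakg,\frakg)$ and $\fraka'=\mathrm{Hom}(\bar\wedge\frakg,\frakh)$ coincide as abelian graded Lie algebras, so $f_\fraka=\Id$. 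With these identities at hand, the compatibilities $f_\frakL\circ\iota_\fraka=\iota_{\fraka'}\circ f_\fraka$ and $f_\fraka\circ P=P'\circ f_\frakL$ merely say that $\iota_\fraka$ and $\iota_{\fraka'}$ (resp. $P$ and $P'$) identify the \emph{same} subspace $\mathrm{Hom}(\bar\wedge\frakg,\frakg')=\mathrm{Hom}(\bar\wedge\frakg,\frakh)$ of $\frakL=\frakL'$ with $\fraka=\fraka'$, while $f_\frakL(\Delta)=\Delta'$ reads $0=0$.

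The substantial point is the third map, $f_\frakm$. First I would check that the block-spreading embedding $\iota_\frakm\colon f\mapsto\sum_i f_i$, which re-reads an element $f\in\mathrm{Hom}(\bar\wedge\frakg,\frakg)$ as a multilinear map on $\frakg\oplus\frakg'$ by distributing its $n+1$ arguments between the two copies, actually takes values in the subspace $\frakm'=\mathrm{Hom}(\bar\wedge\frakg,\frakg)\oplus\mathrm{Hom}(\bar\wedge\frakg\otimes\bar\wedge\frakh,\frakh)\oplus\mathrm{Hom}(\bar\wedge\frakh,\frakh)$ of $\frakL'$; this is a bookkeeping verification, sorting the components $f_i$ according to how many arguments lie in $\frakg$ versus in $\frakg'=\frakh$ and matching the resulting blocks against the three summands of $\frakm'$. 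Granting this, one takes $f_\frakm$ to be $\iota_\frakm$ with codomain restricted to $\frakm'$, so that $f_\frakL\circ\iota_\frakm=\iota_{\frakm'}\circ f_\frakm$ holds tautologically. That $f_\frakm$ preserves the graded Lie brackets is then formal: $\iota_\frakm$ does, by Proposition~\ref{prop: lambda V-data-absolute}, and $\iota_{\frakm'}\colon\frakm'\hookrightarrow\frakL'$ is an \emph{injective} homomorphism of graded Lie algebras, by Proposition~\ref{prop: grad sublie alg}, so transporting the bracket identity for $\iota_\frakm$ across the identity $f_\frakL=\Id$ and cancelling the injective map $\iota_{\frakm'}$ forces $f_\frakm$ to be a homomorphism.

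Having established that $f$ is a morphism of generalised V-data, Proposition~\ref{prop:L_infty morphism} yields an $L_\infty[1]$-algebra homomorphism $\tilde f\colon s\frakm\oplus\fraka\to s\frakm'\oplus\fraka'$ between the structures of Proposition~\ref{prop: Linfinity for absolute} and Proposition~\ref{prop: V-data for relative and L-infinity}. As with the inclusion of Theorem~\ref{theo:V-data}, the morphism induced from a V-data morphism is strict, with underlying linear map $(sf_\frakm)\oplus f_\fraka$; since $f_\frakm=\iota_\frakm$ is injective and $f_\fraka=\Id$ is bijective, $\tilde f$ is injective, which completes the argument. The one place where genuine work hides is the block-matching claim $\iota_\frakm(\frakm)\subseteq\frakm'$, together with the accompanying check that no component is dropped so that the square with $\iota_{\frakm'}$ really commutes; everything else is formal once Propositions~\ref{prop: lambda V-data-absolute},~\ref{prop: grad sublie alg} and~\ref{prop:L_infty morphism} are in force.
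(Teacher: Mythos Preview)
Your proposal is correct and matches the paper's intent; the paper itself offers no proof beyond declaring the result ``clear,'' so your argument is precisely the natural fleshing-out of what the authors leave implicit. The block-matching check $\iota_\frakm(\frakm)\subseteq\frakm'$ is indeed the only substantive point (and it holds: $f_0$ lands in $\Hom(\bar\wedge\frakg,\frakg)$, each $f_i$ with $1\le i\le n$ lands in $\Hom(\bar\wedge\frakg\otimes\bar\wedge\frakh,\frakh)$, and $f_{n+1}$ lands in $\Hom(\bar\wedge\frakh,\frakh)$), after which everything follows formally from Propositions~\ref{prop: lambda V-data-absolute}, \ref{prop: grad sublie alg}, and~\ref{prop:L_infty morphism} as you describe.
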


The above result means that  one can deduce the $L_\infty[1]$-structure of absolute differential Lie algebras   from that of relative differential Lie algebras.

\subsubsection{From absolute to relative}\

Let  $(\frakg,\frakh,\rho)$ be  a LieAct triple   and $D:\frakg\lon\frakh$ be a linear map.

By \cite{PSTZ21}, there exists a Lie algebra $\frakg\ltimes_\rho \frakh  $, where the Lie bracket is given by
$$[x + u, y + v]_\ltimes = [x, y]_\frakg + \rho(x)v - \rho(y)u + \lambda[u, v]_\frakh, \forall x, y \in \frakg, u, v \in \frakh.$$
The map $D$ extends to $\tilde{D}: \frakg\ltimes_\rho \frakh \to \frakg\ltimes_\rho \frakh$ by $\tilde{D}(x+u)=D(x)-u,\forall x  \in \frakg, u  \in \frakh$. Furthermore, we have the following observation which seems to be new.
\begin{prop}\label{new obserbation}
$\tilde{D}$ is a differential operator of weight $\lambda$ if and only if $D$ is a relative differential operator of weight $\lambda$.
\end{prop}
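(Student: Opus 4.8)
The plan is to prove the proposition by a direct computation inside the Lie algebra $\frakg\ltimes_\rho\frakh$. Since $\frakg\ltimes_\rho\frakh=\frakg\oplus\frakh$ as a vector space and since both the weight-$\lambda$ Leibniz identity for $\tilde D$ and the relative-differential-operator identity for $D$ are $\bfk$-bilinear in their slots, it suffices to test
\[
\tilde D([a,b]_\ltimes)=[\tilde D(a),b]_\ltimes+[a,\tilde D(b)]_\ltimes+\lambda[\tilde D(a),\tilde D(b)]_\ltimes
\]
on a generic pair $a=x+u$, $b=y+v$ with $x,y\in\frakg$ and $u,v\in\frakh$, and then to compare the two sides in each of the two summands $\frakg$ and $\frakh$ separately.

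First I would expand the left-hand side: by definition $[a,b]_\ltimes=[x,y]_\frakg+\big(\rho(x)v-\rho(y)u+\lambda[u,v]_\frakh\big)$, whose $\frakg$-part is $[x,y]_\frakg$ and whose $\frakh$-part is the parenthesised term, so applying $\tilde D(z+w)=D(z)-w$ gives $\tilde D([a,b]_\ltimes)=D([x,y]_\frakg)-\rho(x)v+\rho(y)u-\lambda[u,v]_\frakh$. For the right-hand side the crucial structural observation is that $\tilde D(a)=D(x)-u$ and $\tilde D(b)=D(y)-v$ lie entirely in the ideal $\frakh\subseteq\frakg\ltimes_\rho\frakh$; hence in each of the three brackets on the right one (resp.\ both) argument(s) have vanishing $\frakg$-component, so most terms in the defining formula for $[\,\cdot,\cdot\,]_\ltimes$ drop out (using $\rho(0)=0$ and $[0,\cdot]_\frakg=0$) and the expansions are short. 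Collecting everything, the identity $\text{LHS}=\text{RHS}$ splits into two parts: a part built only from $x,y$ and $D$, which is precisely the equation $D([x,y]_\frakg)=\rho(x)(D(y))-\rho(y)(D(x))+\lambda[D(x),D(y)]_\frakh$ defining a relative differential operator of weight $\lambda$; and the remaining ``mixed'' part containing the letters $u,v$, which one must check cancels identically using that each $\rho(x)$ is a derivation of $(\frakh,[\,\cdot,\cdot\,]_\frakh)$ together with bilinearity and antisymmetry of $[\,\cdot,\cdot\,]_\frakh$. Once the mixed part is shown to vanish, the Leibniz identity for $\tilde D$ holds on all of $\frakg\ltimes_\rho\frakh$ exactly when the first part holds, i.e.\ exactly when $D$ is a relative differential operator of weight $\lambda$; the ``only if'' direction is then immediate by specialising the established identity to $u=v=0$.

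The routine-looking but genuinely delicate step --- and the main obstacle --- is the bookkeeping of the mixed terms: one must track the signs and, more importantly, keep straight which slots carry the $\lambda$-weighted bracket $\lambda[\,\cdot,\cdot\,]_\frakh$ when one or both arguments sit in $\frakh$, and then verify that the various shapes $\rho(x)v$, $\rho(y)u$, $[D(x),v]_\frakh$, $[u,D(y)]_\frakh$, $[u,v]_\frakh$ and their $\lambda$-multiples pair off. I would organise this by grouping the mixed terms according to the monomial in $u,v$ they contain, reducing the check to a handful of one-line identities. (One may alternatively phrase the argument as a reduction to the weight-$1$ case treated in \cite{PSTZ21}, via rescaling the bracket on $\frakh$, but the direct check above is self-contained.) No input from the $L_\infty[1]$-machinery of the preceding subsections is needed: this is purely the Lie-algebraic identity underlying the ``from absolute to relative'' comparison recorded here.
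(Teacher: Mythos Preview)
Your approach is essentially the same as the paper's: both proceed by a direct expansion of the weighted Leibniz identity for $\tilde D$ on a generic pair $x+u,\,y+v$ and compare the two sides term by term. One small remark: the derivation property of $\rho$ plays no role in the cancellation of the mixed terms---the paper's explicit expansion shows that the $\rho(x)v$ and $\rho(y)u$ pieces appear identically on both sides, and the remaining $[\,\cdot,\cdot\,]_\frakh$-terms pair off purely by bilinearity and antisymmetry of $[\,\cdot,\cdot\,]_\frakh$.
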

\begin{proof}
	According to the definition of $\tilde{D}$, we have
\begin{equation}\label{tildeD1}
	\begin{aligned}
		&\tilde{D}([x + u, y + v]_\ltimes)\\
		=&\tilde{D}( [x, y]_\frakg + \rho(x)v - \rho(y)u + \lambda[u, v]_\frakh)\\
		=&D([x, y]_\frakg)- \rho(x)v + \rho(y)u - \lambda[u, v]_\frakh.
		\end{aligned}	
\end{equation}
On the other hand,
\begin{equation}\label{tildeD2}
	\begin{aligned}
	&[x+u, \tilde{D}(y+v)]_\ltimes+[\tilde{D}(x+u),y+v]_\ltimes+\lambda [\tilde{D}(x+u), \tilde{D}(y+v)]_\ltimes\\
				=&[x+u, D(y)-v]_\ltimes+[ D(x)-u,y+v]_\ltimes+\lambda [D(x)-u, D(y)-v]_\frakh\\
				= & \rho(x)(D(y)-v)+\lambda[u, D(y)-v]_\frakh-\rho(y) (D(x)-u)-\lambda[v, D(x)-u]_\frakh\\
&+\lambda [D(x)-u, D(y)-v]_\frakh\\
				=&\rho(x)(D(y))-\rho(x)(v)+\lambda[u,D(y)]_\frakh- \lambda[u, v]_\frakh-\rho(y)(D(x))+\rho(y)(u)+\lambda[D(x),v]_\frakh\\ &- \lambda[u, v]_\frakh+\lambda [D(x), D(y)]_\frakh- \lambda[D(x),v]_\frakh-\lambda[u,D(y)]_\frakh+ \lambda[u, v]_\frakh.\\
		\end{aligned}	
\end{equation}
Compare the Eq.~(\ref{tildeD1}) with
Eq.~(\ref{tildeD2}), it is easy to see $\tilde{D}$ is a differential operator of weight $\lambda$ if and only if $D$ is a relative differential operator of weight $\lambda$.
\end{proof}
Now let  $(\frakg,\frakh,\rho,D)$ be  a relative differential Lie algebra of weight $\lambda$, then $(\frakg\ltimes_\rho \frakh,[~,~]_\ltimes,\tilde{D})$ is a differential Lie algebra of weight $\lambda$ by Proposition~\ref{new obserbation}.
As seen in Subsection~\ref{Subsect:  Linfinty for differential Lie algebras},  let $(\frakg\ltimes_\rho \frakh)'=\frakg\ltimes_\rho \frakh$ and
$$\begin{array}{rcl}{\frakL}&=&\mathrm{Hom}(\bar{\wedge}(\frakg\ltimes_\rho \frakh\oplus(\frakg\ltimes_\rho \frakh)'),\frakg\ltimes_\rho \frakh\oplus(\frakg\ltimes_\rho \frakh)'),\\ {\frakm}&=&\mathrm{Hom}(\bar{\wedge}(\frakg\ltimes_\rho \frakh),\frakg\ltimes_\rho \frakh),\\
	{\fraka}&=&\mathrm{Hom}(\bar{\wedge}(\frakg\ltimes_\rho \frakh),\frakg\ltimes_\rho \frakh). \end{array}$$
The two linear maps $\iota_\frakm: \frakm\to \frakL$ and $\iota_\fraka: \fraka\to \frakL$ defined as that in Subsection~\ref{Subsect:  Linfinty for differential Lie algebras}. Then $({\frakL}, {\frakm},\iota_\frakm, {\fraka},\iota_\fraka, P, \Delta=0)$ is a generalised V-datum.

Recall in Subsection~\ref{Subsect: Linifnity structure for relative  differential Lie algebras},  let
$$\begin{array}{rcl}{\frakL'}&=&\mathrm{Hom}(\bar{\wedge}(\frakg\oplus\frakh),\frakg\oplus\frakh),\\ {\frakm'}&=&\mathrm{Hom}(\bar{\wedge}\frakg,\frakg)\oplus\mathrm{Hom}(\bar{\wedge}\frakg\otimes\bar\wedge\frakh,\frakh)
	\oplus\mathrm{Hom}(\bar{\wedge}\frakh,\frakh),\\
	{\fraka'}&=&\mathrm{Hom}(\bar{\wedge}\frakg,\frakh). \end{array}$$
Denote  $\iota_{\fraka'}:{\fraka'}\to {\frakL'}$ to be the  natural injection
and  $P:{\frakL'}\lon{\fraka'}$ to  be the natural surjection.
Then $({\frakL'}, {\frakm'},\iota_{\frakm'}, {\fraka'},\iota_{\fraka'}, P, \Delta'=0)$ is a generalised V-datum.

Let $f_{\frakL'}: \frakL'\to \frakL$,
 $f_{\frakm'}: \frakm'\to\mathfrak{M}$ and $f_{\fraka'}: \fraka'\to\mathfrak{A}$ be natural injection maps.

The following result is clear.
\begin{prop}The triple $f=(f_{\frakL'}, f_{\frakm'}, f_{\fraka'})$ is a morphism of generalised V-data from $({\frakL'}, {\frakm'},\iota_{\frakm'},$ ${\fraka'}, \iota_{\fraka'},P, \Delta'=0)$ to $(\frakL, \frakm,\iota_\frakm, \fraka,$ $ \iota_\fraka, P,\Delta=0)$. It induces an injective homomorphism of $L_\infty[1]$-algebras from $s \mathfrak{M}'\oplus\fraka'$ introduced in Proposition~\ref{prop: V-data for relative and L-infinity} to $s \mathfrak{M}\oplus\fraka$ introduced in Proposition~\ref{prop: Linfinity for absolute}.
\end{prop}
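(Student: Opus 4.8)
The plan is to verify that $f=(f_{\frakL'},f_{\frakm'},f_{\fraka'})$ satisfies the five axioms of a morphism of generalised V-data, and then to invoke Proposition~\ref{prop:L_infty morphism}, which produces the induced $L_\infty[1]$-homomorphism for free; the only extra point is then its injectivity. So essentially all the work lies on the V-data side.

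First I would make the three ``natural injections'' fully explicit. They all come from the identification $\frakg\ltimes_\rho\frakh=\frakg\oplus\frakh$ of underlying vector spaces, combined with the block decomposition $\Hom(\bar\wedge(\frakg\oplus\frakh),\frakg\oplus\frakh)\cong\bigoplus_{k,l}\Hom(\wedge^k\frakg\otimes\wedge^l\frakh,\frakg)\oplus\bigoplus_{k,l}\Hom(\wedge^k\frakg\otimes\wedge^l\frakh,\frakh)$ that underlies $\frakm'$, $\fraka'$, $\iota_{\frakm'}$, $\iota_{\fraka'}$, $P'$ on the relative side, and with the ``doubling'' maps $\iota_\frakm$, $\iota_\fraka$, $P$ attached to the vector space $\frakg\ltimes_\rho\frakh$ on the absolute side. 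Writing $f_{\frakL'}$ out block by block, one reads off $f_{\frakm'}$ and $f_{\fraka'}$ as its restrictions, checking en route that these take values in $\frakm$ and in $\fraka$.

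The substantive point is that $f_{\frakL'}$ is a homomorphism for the Nijenhuis--Richardson brackets; I would prove this by a direct computation on pure tensors, entirely parallel to the one showing that $\iota_\frakm$ is a graded Lie algebra homomorphism in the proof of Proposition~\ref{prop: lambda V-data-absolute} (one may also try to phrase $f_{\frakL'}$ as a push-forward of multilinear maps along a split injection, an operation that preserves $\bar\circ$ and hence $[\,,\,]_{\NR}$). The homomorphism property for $f_{\frakm'}$ then follows, using Proposition~\ref{prop: grad sublie alg} that $\frakm'$ is a subalgebra of $\frakL'$, and for $f_{\fraka'}$ it is automatic since $\fraka'$ and $\fraka$ both carry the trivial bracket. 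The condition $f_{\frakL'}(\Delta'=0)=\Delta=0$ is trivial, and the three compatibility squares $f_{\frakL'}\circ\iota_{\frakm'}=\iota_\frakm\circ f_{\frakm'}$, $f_{\frakL'}\circ\iota_{\fraka'}=\iota_\fraka\circ f_{\fraka'}$ and $f_{\fraka'}\circ P'=P\circ f_{\frakL'}$ are checked on homogeneous elements by unwinding the block formulas for $\iota_{\frakm'},\iota_{\fraka'},P'$ against the shuffle formulas for $\iota_\frakm,\iota_\fraka,P$; this is the same bookkeeping with exterior powers and shuffles as in Lemma~\ref{key formula} and in the proof of Proposition~\ref{prop: lambda V-data-absolute}. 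With all the axioms in hand, Proposition~\ref{prop:L_infty morphism} yields the $L_\infty[1]$-homomorphism $\tilde f\colon s\frakm'\oplus\fraka'\to s\frakm\oplus\fraka$ between the structures of Propositions~\ref{prop: V-data for relative and L-infinity} and \ref{prop: Linfinity for absolute}, and $\tilde f$ is injective because its component maps $f_{\frakm'}$ and $f_{\fraka'}$ are.

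The main obstacle, such as it is, is the verification of the three compatibility squares (and, on the direct route, of the bracket-preservation of $f_{\frakL'}$): one must reconcile the ``distributed''/shuffled inclusions of the absolute construction with the honest block inclusions of the relative construction. There is no conceptual content here — it is purely a matter of matching combinatorial formulas — which is precisely why the statement can be recorded as clear.
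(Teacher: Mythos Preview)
Your plan has a genuine obstruction that the paper's ``clear'' does not resolve either. The issue is the third compatibility square $f_{\fraka'}\circ P' = P\circ f_{\frakL'}$: together with the other two squares and the requirement that $f_{\frakL'}$ be a graded Lie algebra homomorphism, it forces a contradiction. Concretely, take $f\in\Hom(\wedge^2\frakh,\frakh)\subset\frakm'$ and $\xi\in\Hom(\frakg,\frakh)=\fraka'$. In $\frakL'$ one has $[f,\xi]_{\NR}=f\bar\circ\xi\in\Hom(\frakg\otimes\frakh,\frakh)\subset\frakm'$, hence $P'([f,\xi]_{\NR})=0$. On the other hand, the two $\iota$-squares give $f_{\frakL'}(f)=\iota_\frakm(f_{\frakm'}(f))$ and $f_{\frakL'}(\xi)=\iota_\fraka(f_{\fraka'}(\xi))$; applying $P$ and using the computation in the proof of Proposition~\ref{prop: Linfinity for absolute} (the $i=2$ case) yields $P\big([f_{\frakL'}(f),f_{\frakL'}(\xi)]_\frakL\big)=[f_{\frakm'}(f),f_{\fraka'}(\xi)]_{\NR}=f\bar\circ\xi\neq 0$ in $\fraka$. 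With $f_{\frakL'}$ a Lie homomorphism and the $P$-square, this would equal $f_{\fraka'}(P'([f,\xi]_{\NR}))=0$, which is impossible. So no choice of ``natural injection'' $f_{\frakL'}$ makes $(f_{\frakL'},f_{\frakm'},f_{\fraka'})$ a morphism of generalised V-data when $f_{\frakm'}$ and $f_{\fraka'}$ are the obvious inclusions.

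The same example shows that the second assertion fails on the nose for the obvious strict map $s\frakm'\oplus\fraka'\hookrightarrow s\frakm\oplus\fraka$: one has $l_2^{\mathrm{rel}}(sf,\xi)=P'[f,\xi]_{\NR}=0$ while $l_2^{\mathrm{abs}}(sf,\xi)=[f,\xi]_{\NR}\neq 0$, so that map is not an $L_\infty[1]$-homomorphism. Thus the verification you describe as ``purely a matter of matching combinatorial formulas'' cannot succeed; either the maps $f_{\frakm'},f_{\fraka'}$ intended by the paper are not the obvious inclusions, or the statement needs to be weakened or reformulated. Before attempting the bookkeeping, you should first pin down \emph{which} injections can possibly make both claims true.
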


The above result means that  one can deduce the $L_\infty[1]$-structure of   relative differential Lie algebras from that of absolute differential Lie algebras.

%
\bigskip

\section{Application of $L_\infty[1]$-structure for differential Lie algebras} \label{Sect: Applications}

In this section, we will derive from the $L_\infty[1]$-structure the cohomology theory of differential Lie algebras of arbitrary weight and the notion of homotopy differential Lie algebras of arbitrary weight.

\subsection{Cohomology of differential Lie algebras from  $L_\infty[1]$-structure}\
\label{Subsect: Cohomology of differential Lie algebras from  Linfinit structure}

Let $\frakg$ be a vector space.  Let $$\frakm:=\mathrm{Hom}(\bar{\wedge}\frakg,\frakg)\ \mathrm{and}\
\fraka:=\Hom(\bar{\wedge}\frakg,\frakg).$$
Recall that we have constructed an $L_\infty[1]$-structure on $s\frakm\oplus\fraka$ in Proposition~\ref{prop: Linfinity for absolute}.

Let $(\frakg, \mu, \dd)$ be a differential Lie algebra.  By Theorem~\ref{Thm: MC elements in absolute Linifnity}, $(s\mu, \dd)$ is a Maurer-Cartan element in the $L_\infty[1]$-algebra $s\frakm\oplus\fraka$. By  Proposition~\ref{Prop: twist-L-infty[1]}, twisting $s\frakm\oplus\fraka$ by  $(s\mu, \dd)$ gives a new $L_\infty[1]$-algebra, whose new differential is denoted by $l_{1}^{(s\mu,\dd)}$.

Consider the cochain complex $\rmC_{\Diffl}^*(\frakg, \frakg_\ad)$  of the differential Lie algebra $(\frakg, \mu, \dd)$ with coefficients in the adjoint representation $\frakg_\ad$.   Note that for each $n\geq  1$,   $$\rmC_{\Diffl}^n(\frakg, \frakg_\ad)=
			\rmC^n_\alg(\frakg,\frakg_\ad)\oplus \rmC^{n-1}_\DO(\frakg,\frakg_\ad)=\mathrm{Hom}( \wedge^n\frakg,\frakg)\oplus \mathrm{Hom}( \wedge^{n-1}\frakg,\frakg)=(s\frakm\oplus\fraka)^{n-2}$$
is exactly the degree $n-2$ part of $s\frakm\oplus\fraka$.




\begin{prop}\label{cochaincomplexad}
	The underlying complex of the twisted  $L_\infty[1]$-algebra  $s\frakm\oplus\fraka$ is exactly the double shift of
the cochain complex $\rmC_{\Diffl}^*(\frakg, \frakg_\ad)$, up to signs.
\end{prop}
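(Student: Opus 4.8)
The plan is to compute explicitly the twisted differential $l_1^{(s\mu,\dd)}$ on each homogeneous component of $s\frakm\oplus\fraka$ and match it term by term with the differential $\partial_{\Diffl}^*$ of Definition~\ref{def: cochain complex for differential Lie algebras}. Recall that $(s\frakm\oplus\fraka)^{n-2}=\mathrm{Hom}(\wedge^n\frakg,\frakg)\oplus\mathrm{Hom}(\wedge^{n-1}\frakg,\frakg)=\rmC_{\Diffl}^n(\frakg,\frakg_\ad)$, so we must check that for a pair $(sf,\xi)$ with $f\in\mathrm{Hom}(\wedge^n\frakg,\frakg)$ and $\xi\in\mathrm{Hom}(\wedge^{n-1}\frakg,\frakg)$, the element $l_1^{(s\mu,\dd)}(sf,\xi)$ agrees, up to the prescribed signs, with $\partial_{\Diffl}^n(f,\xi)=(\partial_\alg^n f,\,-\partial_\DO^{n-1}\xi-\delta^n f)$.

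First I would write out the twisting formula of Proposition~\ref{Prop: twist-L-infty[1]}: $l_1^{(s\mu,\dd)}(x)=\sum_{i\geq 0}\frac{1}{i!}l_{1+i}((s\mu,\dd)^{\ot i}\ot x)$. The untwisted brackets are those of Proposition~\ref{prop: Linfinity for absolute}: $l_1=0$, $l_2(sf,sg)=(-1)^{|f|}s[f,g]_{\NR}$, $l_2(sf,\xi)=[f,\xi]_{\NR}$, and for $i\geq 3$ the higher brackets $l_i(sf,\xi_1,\dots,\xi_{i-1})$ given by the shuffle formula in terms of $\bar\circ$ and powers of $\lambda$. I would then organize the computation of $l_1^{(s\mu,\dd)}(sf,\xi)$ into two pieces according to the two summands of the target:
\begin{itemize}
\item[(a)] the $s\frakm$-component, coming from $\frac{1}{i!}l_{1+i}((s\mu,\dd)^{\ot i}\ot sf)$. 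Only $s\mu$-slots can feed in here (plugging $\dd\in\fraka$ gives output in $\fraka$), and since $l_n$ vanishes on three or more $s\frakm$-inputs, only $i=0,1$ survive: $l_2(s\mu,sf)=(-1)^{|\mu|}s[\mu,f]_{\NR}$. Using the standard identification of the Nijenhuis--Richardson bracket $[\mu,-]_{\NR}$ with the Chevalley--Eilenberg differential $\partial_\alg$ (with the usual degree/sign bookkeeping from the suspension), this reproduces $\partial_\alg^n f$.
\item[(b)] the $\fraka$-component, coming from $l_2(s\mu,\xi)$, $l_2(sf,\dd)$, and the higher brackets $l_{1+i}$ with $i-1$ copies of $\dd$ together with exactly one of $s\mu$ or $sf$. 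Here I would use Example~\ref{example spcial case} (and, more generally, Lemma~\ref{key formula}) to see that feeding $\dd$ into $\mu$ stabilizes after two insertions, so the infinite sum truncates: the $\mu$-line contributes $P[\mu,\xi]_{\NR}+\lambda P[[\mu,\xi]_{\NR},\dd]_{\NR}+\frac{\lambda^2}{2}P[[[\mu,\xi]_{\NR},\dd]_{\NR},\dd]_{\NR}$, which unwinds to $-\partial_\DO^{n-1}\xi$ after recognizing $\rho_\lambda(x)=\rho(x+\lambda\dd(x))$ in the coboundary of the twisted representation $V_\lambda$; and the $f$-line contributes $\sum_{i\geq 1}\frac{\lambda^{i-1}}{i!}\cdot(\text{shuffle sum of }f\text{ with }i\text{ copies of }\dd)$, which is exactly $-\delta^n f$ once the $\lambda^{k-1}$-weighted sums over $\{i_1<\cdots<i_k\}$ in the definition of $\delta$ are identified with the shuffle expansions (the $\dd_V=\ad\circ\dd$-term of $\delta$ being the weight-zero, i.e. $i=1$, contribution). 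The cross-terms $l_2(s\mu,\xi)$ and the single-$\dd$ insertions are reconciled with the signs in $\partial_{\Diffl}$.
\end{itemize}

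The routine-but-delicate part, and the main obstacle, is the sign reconciliation: the paper uses three distinct sign conventions (Koszul signs on $S(sV)$ versus $\bigwedge V$ via \eqref{Eq: exterior vs symmetric}, the $(-1)^{i+n}$-type signs in $\partial_\alg$, and the $(-1)^{|f|}$ in $l_2$), and one must track the double suspension carefully to land on the stated $\partial_{\Diffl}$ rather than a sign-twisted variant. I would handle this by fixing once and for all the isomorphism $\rmC^n_{\Diffl}(\frakg,\frakg_\ad)\cong(s\frakm\oplus\fraka)^{n-2}$ via \eqref{Eq: exterior vs symmetric} and then verifying the identity on a general homogeneous element, deferring the bulk of the sign computation to a lemma or to the reader, as the paper already does for Proposition~\ref{prop:delta}. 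The only genuinely non-formal input is that the Nijenhuis--Richardson bracket with the Lie structure $\mu$ computes Chevalley--Eilenberg cohomology and that bracketing with $\dd$ produces exactly the $\lambda$-shifted coboundary operators $\partial_\DO$ and the map $\delta$; both follow from Lemma~\ref{key formula} and the defining LieAct/representation identities, so once (a) and (b) above are established the statement follows. Finally, since $\delta$ is then realized as (minus) a component of the $L_\infty[1]$-differential and $l_1^{(s\mu,\dd)}$ squares to zero, Proposition~\ref{prop:delta} (that $\delta$ is a cochain map) drops out as a corollary, which is the application promised in the text.
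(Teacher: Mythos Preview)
Your approach is correct and mirrors the paper's own proof: both expand the twisting formula for $l_1^{(s\mu,\dd)}(sf,g)$, identify the $s\frakm$-component with $-s\partial_\alg^n f$ via $l_2(s\mu,sf)$, and split the $\fraka$-component into the $\mu$-contribution $l_2(s\mu,g)+l_3(s\mu,\dd,g)$ (yielding $\partial_\DO^{n-1}$; your third $\lambda^2$-term vanishes since $\mu$ is binary) and the $f$-contribution $\sum_{k\ge 1}\frac{1}{k!}l_{k+1}(sf,\dd^{\otimes k})$ (yielding $\delta^n$). Two cosmetic points: for the adjoint representation $\dd_V=\dd_\frakg$ (not $\ad\circ\dd$), and the paper lands on $l_1^{(s\mu,\dd)}=-\partial_{\Diffl}^n$ rather than $+\partial_{\Diffl}^n$, which is the ``up to signs'' of the statement.
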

\begin{proof}
%
%

It suffices to make explicit the differential $l_1^{(s\mu, \dd)}$.

For  $n\geq 1$, $f\in \mathrm{Hom}( \wedge^n\frakg,\frakg), g\in  \mathrm{Hom}( \wedge^{n-1}\frakg,\frakg)$,
\begin{eqnarray*}
	 l_1^{(s\mu, \dd)}(sf, g)
	&=& \sum_{k=0}^{\infty}\frac{1}{k!}l_{k+1}(\underbrace{(s\mu,\dd),\cdots,(s\mu,\dd)}_{k\ \mathrm{times}}, (sf, g))\\
	&=& l_2((s\mu, \dd),(sf,g))
	 + \sum_{k=2}^{\infty}\frac{1}{k!}l_{k+1}(\underbrace{(s\mu,\dd),\cdots,(s\mu,\dd)}_{k\ \mathrm{times}}, (sf, g))\\
	&=& \big(l_2(s\mu, sf), l_2(s\mu, g)+l_3(s\mu, \dd, g)+l_2(sf, \dd)
	 + \sum_{k=2}^{n}\frac{1}{k!}l_{k+1}(sf, \underbrace{\dd,\cdots,\dd}_{k\ \mathrm{times}}) \big).
\end{eqnarray*}

It is easy to see that
 $l_2(s\mu, sf)=- s[\mu, f]_{\NR}$ is exactly $-s\partial_\Lie^{n}(f)$.

 Let us compute $l_2(s\mu, g)+l_3(s\mu, \dd, g)
	 +l_2(sf, \dd)+ \sum\limits_{k=2}^{n}\frac{1}{k!}l_{k+1}(sf, \underbrace{\dd,\cdots,\dd}_{k\ \mathrm{times}})$.


For   $x_1,\dots,x_{n}\in \frakg$, we have
\begin{eqnarray*}
	&&(l_2(s\mu, g)+l_3(s\mu, \dd, g))(x_1, \dots, x_{n})\\
	&=&[\mu, g]_\NR (x_1, \dots, x_{n})+
	\lambda \sum\limits_{\tau\in\Sh( 1, n-1)} \mu  \big(g\otimes \dd \big)\tau^{-1}(x_1, \dots, x_{n})\\
	&=&\sum_{i=1}^{n}(-1)^{i+n-1}\rho(x_i ) g(x_1,\dots,\hat{x}_i, \dots, x_{n})\\
	&&+\sum_{1\leq i<j\leq n}^{n-1}(-1)^{i+j+n}g([x_i, x_j], x_1,\dots,\hat{x}_i, \dots,\hat{x}_j, \dots,x_{n})\\
	&&+\lambda\sum_{i=1}^{n}(-1)^{i+n-1}\rho(  \dd(x_i)) g(x_1,\dots,\hat{x}_i, \dots, x_{n})\\
	&=& \partial_\DO^{n-1}(g)(x_1, \dots, x_n).
\end{eqnarray*}
On the other hand,
\begin{eqnarray*}
	&&l_2(sf, \dd)+\sum\limits_{k=2}^{n}\frac{1}{k!}l_{k+1}(sf, \underbrace{\dd,\cdots,\dd}_{k\ \mathrm{times}})(x_1, \dots, x_{n})\\
	&=&\big(-\dd\circ f+\sum_{k=1}^{n}\frac{1}{k!}\lambda^{k-1}f\bar\circ\{\underbrace{\dd,\cdots,\dd}_{k\ \mathrm{times}}\}\big)(x_1, \dots, x_{n})\\
	&=&-\dd(f(x_1,\cdots,x_n))+\sum_{k=1}^{n}\frac{1}{k!}\sum_{\tau\in\Sh(\underbrace{1,\cdots,1}_{k\ \mathrm{times}}, n-k)}\lambda^{k-1}f(\underbrace{\dd\otimes\dots\otimes \dd}_{k\ \mathrm{times}}\otimes \Id^{\otimes n-k}\big)(x_1, \dots, x_{n})\\
	&=& \sum_{\sigma\in \Sh(k,n-k)}\sgn(\sigma)f_k(x_1, \cdots, \hat{x_{i_1}},\cdots, \hat{x_{i_k}},\cdots,x_n ,\dd(x_{i_1}),\cdots,\dd(x_{i_k}))-\dd(f(x_1,\cdots,x_n))\\
	&=&\delta^n(f).
\end{eqnarray*}

That is, we have
$$ l_1^{(s\mu, \dd)}(sf, g) =\big(-s\partial_\Lie^{n}(f), \partial_\DO^{n-1}(g)+\delta^n(f)\big)=-\partial^n_{\Diffl}(f,g).$$
\end{proof}


Let $(V,\dd_V)$ be a representation of the differential Lie algebra $(\frak\frakg,\dd_{\frakg})$ with weight $\lambda$.
Now we justify the cochain complex $(\rmC_{\Diffl}^*(\frakg, V),\partial_{\Diffl}^*)$ of the differential Lie algebra $(\frak\frakg,\dd_{\frakg})$ with coefficients in the representation $(V,\dd_V)$ introduced in Definition \ref{def: cochain complex for differential Lie algebras} and Proposition \ref{prop:delta}.

To this end,  we consider the trivial extension  $ \frakg \ltimes V$ of the differential Lie algebra $(\frakg,\dd_{\frakg})$ by the representation $(V,\dd_V)$ in Proposition~\ref{Prop: trivial extensions}, and we get the complex $( \rmC^*_{\Diffl}(\frakg \ltimes V, (\frakg \ltimes V)_\ad),\partial_{\Diffl}^*)$, by  Proposition \ref{cochaincomplexad}. The following result can be proved by direct inspection.

\begin{prop} \label{dlcohomology}
The cochain complex $(\rmC_{\Diffl}^*(\frakg, V),\partial_{\Diffl}^*)$ of the differential Lie algebra $(\frak\frakg,\dd_{\frakg})$ with coefficients in the representation $(V,\dd_V)$ introduced in Definition \ref{def: cochain complex for differential Lie algebras}   is a subcomplex of
	  $( \rmC^*_{\Diffl}(\frakg \ltimes V, (\frakg \ltimes V)_\ad),\partial_{\Diffl}^*)$.
\end{prop}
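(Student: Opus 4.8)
The plan is to realise $(\rmC_{\Diffl}^*(\frakg, V),\partial^*_{\Diffl})$ as the image of an explicit injective cochain map into $(\rmC^*_{\Diffl}(\frakg\ltimes V,(\frakg\ltimes V)_\ad),\partial^*_{\Diffl})$. Since $\frakg\ltimes V$ is a differential Lie algebra and $(\frakg\ltimes V)_\ad$ its adjoint representation, the target is exactly the complex prescribed by Definition~\ref{def: cochain complex for differential Lie algebras}, assembled from $\partial^*_\alg$, $\partial^*_\DO$ and $\delta^*$ for $\frakg\ltimes V$; by Proposition~\ref{cochaincomplexad} it is, up to shift and signs, the underlying complex of a twisted $L_\infty[1]$-algebra, so in particular it is a genuine cochain complex. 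First I would define the comparison map $\iota$: in degree $0$ it is the inclusion $V\hookrightarrow\frakg\oplus V$; in degree $n\geq1$ it sends $(f,g)$ to $(\hat f,\hat g)$, where for a cochain $h\in\Hom(\wedge^m\frakg,V)$ the extension $\hat h\in\Hom(\wedge^m(\frakg\oplus V),\frakg\oplus V)$ is the map that agrees with $h$ on $\wedge^m\frakg$ (post-composed with $V\hookrightarrow\frakg\oplus V$) and vanishes on every summand $\wedge^k\frakg\otimes\wedge^l V$ of $\wedge^m(\frakg\oplus V)$ with $l\geq1$. This is the classical device relating Chevalley--Eilenberg cohomology with coefficients in $V$ to the cohomology of the semidirect product, applied simultaneously to the $\alg$-summand and to the $\DO$-summand; it is visibly injective, and a cochain of the target lands in its image exactly when it is $V$-valued and annihilates any tuple containing at least one argument in $V$.

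The work then reduces to the single identity $\partial^n_{\Diffl}(\hat f,\hat g)=\widehat{\partial^n_{\Diffl}(f,g)}$ for all $n$ (together with its degree-$0$ analogue), both sides being cochains on $\frakg\ltimes V$ that I would evaluate on tuples of elements of $\frakg\oplus V$; the verification rests on three elementary features of the trivial extension: (a) $[u,v]_{\frakg\ltimes V}=0$ for $u,v\in V$; (b) $\ad_{\frakg\ltimes V}(u)$ annihilates $V$ when $u\in V$; and (c) $\dd_{\frakg\ltimes V}=\dd_\frakg+\dd_V$ is block-diagonal, restricting to $\dd_\frakg$ on $\frakg$ and to $\dd_V$ on $V$. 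On tuples of elements of $\frakg$: because $\ad_{\frakg\ltimes V}(x)v=\rho(x)v$ and $[x,y]_{\frakg\ltimes V}=[x,y]_\frakg$, the operator $\partial^n_\alg$ for $\frakg\ltimes V$ evaluated on $\hat f$ returns $\partial^n_\alg f$ computed with coefficients in $(V,\rho)$; since $(\rho_{\frakg\ltimes V})_\lambda(x)=\ad_{\frakg\ltimes V}(x+\lambda\dd_\frakg(x))$ acts on $v\in V$ as $\rho(x+\lambda\dd_\frakg(x))v=\rho_\lambda(x)v$, the operator $\partial^{n-1}_\DO$ for $\frakg\ltimes V$ reproduces $\partial^{n-1}_\DO$ for $(\frakg,V)$; and by (c) each $\dd_{\frakg\ltimes V}(x_i)=\dd_\frakg(x_i)$ stays in $\frakg$ while $\dd_{\frakg\ltimes V}(\hat f(x_1,\dots,x_n))=\dd_V(f(x_1,\dots,x_n))$, so that $\delta^n$ for $\frakg\ltimes V$ collapses to $\delta^n$ for $(\frakg,V)$. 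Combining these with the signs of Definition~\ref{def: cochain complex for differential Lie algebras} yields $\widehat{\partial^n_{\Diffl}(f,g)}$ on $\frakg$-arguments.

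On tuples with at least one argument in $V$, the right-hand side vanishes by construction, so the remaining point is that the left-hand side vanishes too; this is where features (a)--(c) are used once more. In each summand of $\partial^n_\alg\hat f$, $\partial^{n-1}_\DO\hat g$ and $\delta^n\hat f$ one of three things happens: a $V$-argument is fed into $\hat f$ or $\hat g$ --- possibly after applying $\dd_{\frakg\ltimes V}$, which by (c) keeps it in $V$, or after bracketing it with a $\frakg$-element, which by $[x,v]_{\frakg\ltimes V}=\rho(x)v$ again leaves it in $V$ --- and then $\hat f$ or $\hat g$ vanishes; or $\ad_{\frakg\ltimes V}(u)$ with $u\in V$ hits the $V$-valued output of $\hat f$ or $\hat g$, giving $0$ by (b); or two $V$-arguments get bracketed, giving $0$ by (a). Hence $\partial^n_{\Diffl}(\hat f,\hat g)$ vanishes there, $\iota$ is an injective cochain map, and $\rmC_{\Diffl}^*(\frakg,V)$ is a subcomplex of $\rmC^*_{\Diffl}(\frakg\ltimes V,(\frakg\ltimes V)_\ad)$; since the latter is a genuine complex, this in passing re-proves that $\partial^*_{\Diffl}$ squares to zero and hence that $\delta^*$ is a cochain map (Proposition~\ref{prop:delta}). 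All the individual computations are routine; the only part that calls for genuine care --- and the reason the weight $\lambda\neq0$ case is not immediate --- is matching the $\rho_\lambda$ inside $\partial_\DO$ and the $\dd_\frakg$-insertions inside $\delta^n$ with the ambient operators, which works precisely because $\dd_{\frakg\ltimes V}$ never mixes the summand $\frakg$ with the summand $V$.
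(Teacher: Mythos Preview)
Your proposal is correct and follows essentially the same approach as the paper: use the decomposition $\wedge^p(\frakg\oplus V)\simeq\bigoplus_{k+l=p}\wedge^k\frakg\otimes\wedge^l V$ to define the natural injection $\Hom(\wedge^m\frakg,V)\hookrightarrow\Hom(\wedge^m(\frakg\oplus V),\frakg\oplus V)$ (extend by zero on tuples with a $V$-entry, post-compose with $V\hookrightarrow\frakg\oplus V$), and check this commutes with $\partial_{\Diffl}$. The paper's proof is a bare sketch that stops at ``it suffices to show that this map commutes with differentials''; you have supplied exactly that verification, correctly splitting into the two cases (pure $\frakg$-tuples versus tuples containing a $V$-entry) and invoking the block-diagonality of $\dd_{\frakg\ltimes V}$ and the vanishing of $[V,V]_{\frakg\ltimes V}$ at the right moments. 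Your observation that this simultaneously establishes Proposition~\ref{prop:delta} is also the paper's intended logic.
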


 \begin{proof} For vector spaces $U$ and $W$, we have  an isomorphism $$\wedge^p(U\oplus W)\simeq \wedge^p U\oplus (\oplus_{i=1}^{p-1} \wedge^iU\otimes \wedge^{p-i}W)\oplus \wedge^pW.$$
 Hence,
 there exists a natural injection
 from $$\rmC_{\Diffl}^n(\frakg, V)=\mathrm{Hom}( \wedge^n\frakg,V)\oplus \mathrm{Hom}( \wedge^{n-1}\frakg,V)$$ to $$\rmC^n_{\Diffl}(\frakg \ltimes V, (\frakg \ltimes V)_\ad)=\mathrm{Hom}( \wedge^n(\frakg \oplus  V),\frakg \oplus V)\oplus \mathrm{Hom}( \wedge^{n-1}(\frakg \oplus V),\frakg \oplus V).$$
 	Now it suffices to show that this map commutes with differentials.
	
 \end{proof}

\subsection{Homotopy  differential Lie algebras with weight}\

\label{Subsect: Homotopy  differential Lie algebras}

	In this section, we need to use   graded  Nijenhuis-Richardson brackets.
	
	Let $\frakg$ be a graded vector space and denote $\frakl=s\frakg$. Consider the graded vector space $\frakc (\frakg,\frakg):=\mathrm{Hom}(S(\frakl),\frakl)$. 

Recall that by Eq.~\eqref{Eq: exterior vs symmetric},  $S^n(s\frakg)$ is isomorphic to  $s^n\wedge^n\frakg$.
When $\frakg=\frakg^0=V$ is ungraded, then  $\Hom(S^n(sV),sV)$, which has degree $n-1$,  is isomorphic to $ \Hom(\wedge^n V,V)$ as vector spaces. This  justifies  why we have imposed degree $n-1$ on the latter in Subsection~\ref{Subsect: Linifnity structure for relative  differential Lie algebras}.

  The graded Nijenhuis-Richardson
	bracket $[-,-]_{\NR}$ on the graded vector space $\frakc (\frakg,\frakg)$ can be defined as follows:
	for $f \in \mathrm{Hom}(S^n(\frakl),\frakl)$ of degree $p$ and $g\in \mathrm{Hom}(S^m(\frakl),\frakl)$ of degree $q$,
	\begin{equation*}
		[f,g]_{\NR}:=f\bar\circ g-(-1)^{pq}g\bar\circ f,
	\end{equation*}
where $f\bar\circ g\in \mathrm{Hom}(S^{m+n-1}(\frakl),\frakl)$ of degree $p+q$ is defined by
\begin{equation*}
	\begin{aligned}
		f\bar\circ g(v_1,\dots,v_{m+n-1})
		&=\sum_{\sigma\in \Sh(m,n-1)}\varepsilon(\sigma)f(g(v_{\sigma(1)},\dots, v_{\sigma(m)}),v_{\sigma(m+1)},\dots, v_{\sigma(m+n-1)}), \end{aligned}
\end{equation*}
for $v_1, \dots, v_{m+n-1}\in \frakl$.
It is well known that $(\frakc (\frakg,\frakg),[~,~]_\NR)$ is a graded Lie algebra whose Maurer-Cartan elements correspond bijectively  to   $L_\infty[1]$-algebra structures  on $\frakl$.

 We consider the graded Lie algebra $$\frakL:=\frakc (\frakg\oplus\mathfrak{g},\frakg\oplus\mathfrak{g})$$ endowed with the  graded Nijenhuis-Richardson bracket $[~,~]_{\NR}$.

Let $$\frakm:=\mathrm{Hom}(\bar{S}(\frakl),\frakl)\ \mathrm{and}\
	\fraka:=\Hom(\bar{S}(\frakl),\frakl).$$
	Endow $\frakm$ with the graded Nijenhuis-Richardson bracket and $\fraka$ with the trivial bracket.
	With the same analysis in Subsection~\ref{Subsect:  Linfinty for differential Lie algebras}, we have:
	\begin{prop}\label{prop:graded Linfinity for absolute}
	Let $\frakg$ be a graded vector space.
		Then we get a generalised   V-datum $(\frakL, \frakm,\iota_\frakm, \fraka$, $\iota_\fraka, P,\Delta=0)$
		and  there is an $L_\infty$[1]-algebra $s \frakm\oplus\fraka$ given below:
		$$
		l_2(sf,sg)      =    (-1)^{|f|} s[f,g]_{\NR}, \ \
		l_2(sf,\xi)     =     [f, \xi]_{\NR}, $$
		and for $3\leq i\leq n+2$,
		$$l_i(sf,\xi_1,\cdots,\xi_{i-1})      =   \sum\limits_{\tau\in\Sh(m_{i-1}+1,\dots,m_1+1,n+2-i)}(-1)^{\sum\limits_{j=1}^{i-1}(|\xi_1|+\cdots+|\xi_{j-1}|)|\xi_j|}\lambda^{i-2} f  \big(\xi_{i-1}\otimes\dots\otimes\xi_1\otimes\Id^{\otimes n+2-i}\big)\tau^{-1},
		$$
		for homogeneous elements
		$f\in\Hom(S^{n+1}(\frakl),\frakl)\subseteq \frakm$, $g\in\Hom(S^{m+1}(\frakl),\frakl)\subseteq \frakm$, $\xi\in \Hom(S^{m+1}(\frakl),\frakl)\subseteq \fraka$, 	and $\xi_j\in\Hom(S^{m_j+1}(\frakl),\frakl)\subseteq \fraka$, $1\leq j\leq i-1$, and all others components vanish.
\end{prop}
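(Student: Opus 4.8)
The plan is to transcribe, with Koszul signs inserted everywhere arguments are permuted, the proof of the ungraded statements Proposition~\ref{prop: lambda V-data-absolute} and Proposition~\ref{prop: Linfinity for absolute}, working throughout with the graded Nijenhuis--Richardson bracket on $\frakL$ (and on $\frakm$) in place of the ungraded one. None of the conceptual ingredients changes; essentially all the work is in the sign bookkeeping.

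First I would verify that $(\frakL,\frakm,\iota_\frakm,\fraka,\iota_\fraka,P,\Delta=0)$ is a generalised V-datum. That $(\frakL,[-,-]_{\NR})$ is a graded Lie algebra is the recalled standard fact about the graded Nijenhuis--Richardson bracket, and $\fraka$ carries the trivial bracket by fiat. Since $\iota_\fraka$ is the inclusion of, and $P$ the projection onto, the summand of $\frakL$ consisting of maps all of whose inputs lie in the first copy of $\frakg$ and whose output lies in the second copy, the identities $P\circ\iota_\fraka=\Id_\fraka$, the fact that $\Ker(P)$ is a graded Lie subalgebra, and the (trivially satisfied) conditions on $\Delta=0$ all follow from the observation that a composition $\bar\circ$ is nonzero only when the output copy of one factor matches an input copy of the other. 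The one point requiring a computation is that $\iota_\frakm$ is a morphism of graded Lie algebras; this is the graded analogue of the displayed identity in the proof of Proposition~\ref{prop: lambda V-data-absolute}, obtained by expanding $[\sum_i f_i,\sum_j g_j]_{\NR}$, grouping the terms $f_i\bar\circ g_j$ and $g_j\bar\circ f_i$ according to which copies of $\frakg$ and $\frakg'$ they involve, and checking that the matched terms reassemble into $\iota_\frakm(f\bar\circ g)$ and $\iota_\frakm(g\bar\circ f)$; the Koszul signs produced on the two sides agree because the embedding $f\mapsto\sum_i f_i$ never permutes arguments inside a monomial.

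Next, granting the V-datum, Proposition~\ref{Prop: sM+a is L_inf alg} together with the $\lambda$-rescaling procedure of Proposition~\ref{Prop:  lambda twisted infinity algebra} endows $s\frakm\oplus\fraka$ with an $L_\infty[1]$-structure with $l_2(sf,sg)=(-1)^{|f|}s[f,g]_{\NR}$ and, for $i\geq 2$, $l_i(sf,\xi_1,\dots,\xi_{i-1})=\lambda^{i-2}\,P[\cdots[\iota_\frakm(f),\iota_\fraka(\xi_1)]_{\NR},\dots,\iota_\fraka(\xi_{i-1})]_{\NR}$. Exactly as in the proof of Proposition~\ref{prop: Linfinity for absolute}, the case $i=2$ collapses to $[f,\xi]_{\NR}$, and for $i\geq 3$ one notes that in the iterated bracket only the top component of $\iota_\frakm(f)$ survives the successive compositions with the $\iota_\fraka(\xi_j)$ followed by $P$, so that the bracket equals $\big(\cdots(f\bar\circ\xi_1)\bar\circ\cdots\big)\bar\circ\xi_{i-1}$, where $\bar\circ$ is now the graded composition. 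Putting this nested composition into closed form is the content of the graded analogue of Lemma~\ref{key formula}, namely $\big(\cdots(f\bar\circ\xi_1)\bar\circ\cdots\big)\bar\circ\xi_{i-1}=\sum_{\tau\in\Sh(m_{i-1}+1,\dots,m_1+1,n+2-i)}(-1)^{\sum_j(|\xi_1|+\cdots+|\xi_{j-1}|)|\xi_j|}\,f\big(\xi_{i-1}\otimes\cdots\otimes\xi_1\otimes\Id^{\otimes n+2-i}\big)\tau^{-1}$, which I would prove by the same induction as Lemma~\ref{key formula}: unwinding the nested shuffle sums $\Sh(m_{i-1}+1,-),\dots,\Sh(m_1+1,-)$, carrying homogeneous elements of $\frakl=s\frakg$ through, with the Koszul sign $\varepsilon$ replacing $\sgn$ at each stage; the blocks of arguments destined for $\xi_{i-1},\dots,\xi_1$ get reversed. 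Substituting back and restoring $\lambda^{i-2}$ yields the displayed formula for $l_i$.

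The step I expect to be the main obstacle is precisely this graded version of Lemma~\ref{key formula}. In the ungraded proof the accumulated sign came out as the purely combinatorial $(-1)^{\sum_j(m_1+\cdots+m_{j-1})m_j}$, with an auxiliary $(-1)^{i(i-1)/2}$ cancelling against the sign of reversing the blocks; in the graded setting the internal degrees of $\frakl$ and of the $\xi_j$ enter through the Koszul rule every time the degree-shifted output of an already-evaluated $\xi_k$ is pushed past the inputs still to be consumed, and one must confirm that all of these contributions recombine into exactly the quadratic form $\sum_j(|\xi_1|+\cdots+|\xi_{j-1}|)|\xi_j|$ in the total degrees. As a consistency check, when $\frakg$ is concentrated in degree $0$ one has $|\xi_j|=m_j$, and the formula, hence the whole proposition, specialises back to Proposition~\ref{prop: Linfinity for absolute}.
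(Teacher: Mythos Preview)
Your proposal is correct and matches the paper's own approach: the paper does not give an explicit proof of this proposition, but simply introduces it with ``With the same analysis in Subsection~\ref{Subsect:  Linfinty for differential Lie algebras}, we have:'', i.e., it refers the reader back to the ungraded arguments (Proposition~\ref{prop: lambda V-data-absolute}, Lemma~\ref{key formula}, and Proposition~\ref{prop: Linfinity for absolute}) with the understanding that Koszul signs replace the $\sgn$'s throughout. Your sketch spells out precisely this transcription, including the one point requiring care (the graded analogue of Lemma~\ref{key formula}), so there is nothing to add.
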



%

\begin{defn} Let $\frakg$ be a graded vector space.
	A structure of \textbf{homotopy  differential Lie algebra with weight} $\lambda$ on $\frakg$ is defined to be a Maurer-Cartan element of the $L_\infty$[1]-algebra $s \frakm\oplus\fraka$ introduced in Proposition~\ref{prop:graded Linfinity for absolute}.
\end{defn}

\begin{theorem}Let $\frakg$ be a graded vector space and denote $\frakl=s\frakg$.
	A homotopy  differential Lie algebra with weight $\lambda\in\bfk $ on    $ \frakg$  is equivalent to the pair $(\mu=\{\mu_i\}_{i\ge 1},   D=\{D_i\}_{i\ge 1})$, where for each $i\ge 1$, $\mu_i: S^i(\frakl) \to \frakl$ is a degree $1$ map such that $(\frakg, \mu)$ is an $L_\infty[1]$-algebra and  for each $i\ge 1$, $D_i: S^i(\frakl) \to \frakl$ is of degree $0$ which form a homotopy differential operator of weight $\lambda$.  More precisely, for $n\ge 1$ and  $x_1, \dots, x_n\in \frakl$,	we have
	\begin{equation}\label{L-infty[1]}
		\sum_{i=1}^n\sum_{\sigma\in \Sh(i,n-i)}\varepsilon(\sigma)\mu_{n-i+1}(\mu_i(x_{\sigma(1)},\cdots,x_{\sigma(i)}),x_{\sigma(i+1)},\cdots,x_{\sigma(n)})=0,
	\end{equation}
and
\begin{equation}\label{homotopy diff operator}
	\begin{aligned}
		&\sum_{p\geq2}\sum_{t=p-1}^{n}\sum_{m_1+\cdots+m_{p-1}=t}\sum_{\substack{\sigma\in \mathrm{Sh}(m_{p-1},\cdots,m_1,n-t)\\\sigma|_t\in \mathrm{PSh}(m_{p-1},\cdots,m_1)}}\varepsilon(\sigma)\lambda^{p-2}\mu_{n-t+p-1}(D_{m_{p-1}}(x_{\sigma(1)},\cdots,x_{\sigma(m_{p-1})}),\cdots,\\
		&\qquad \cdots, D_{m_{1}}(x_{\sigma(m_2+\cdots+m_{p-1}+1)},\cdots,x_{\sigma(t)}),x_{\sigma(t+1)}\cdots,x_{\sigma(n)})\\
		&-\sum_{j=1}^n\sum_{\sigma\in \Sh(j,n-j)}\varepsilon(\sigma)D_{n-j+1}(\mu_j(x_{\sigma(1)},\cdots,x_{\sigma(j)}),x_{\sigma(j+1)},\cdots,x_{\sigma(n)})=0.
	\end{aligned}
\end{equation}

\end{theorem}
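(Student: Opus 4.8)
The statement is an unwinding of the Maurer--Cartan equation for the $L_\infty[1]$-algebra $s\frakm\oplus\fraka$ constructed in Proposition~\ref{prop:graded Linfinity for absolute}. The plan is to write a general element of $(s\frakm\oplus\fraka)^0$ as a pair $(s\mu, D)$ with $\mu=\{\mu_i\}_{i\ge1}$, $\mu_i\in\Hom(S^i(\frakl),\frakl)$ of degree $1$ (so that $s\mu_i$ has degree $0$ in $s\frakm$), and $D=\{D_i\}_{i\ge1}$, $D_i\in\Hom(S^i(\frakl),\frakl)$ of degree $0$ (so $D_i$ has degree $0$ in $\fraka$), and then to expand $\sum_{n\ge1}\frac{1}{n!}l_n\big((s\mu,D)^{\ot n}\big)=0$ using the explicit formulas for $l_2$ and for $l_i$, $i\ge3$, from Proposition~\ref{prop:graded Linfinity for absolute}. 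Because the only nonvanishing brackets are $l_2(sf,sg)$, $l_2(sf,\xi)$, and $l_i(sf,\xi_1,\dots,\xi_{i-1})$ for $i\ge3$, the Maurer--Cartan equation splits into two families of components according to whether the output lands in the $s\frakm$-summand or the $\fraka$-summand.

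\textbf{Key steps.} First I would isolate the $s\frakm$-component: the only contribution is $\tfrac12 l_2(s\mu,s\mu)=\tfrac{(-1)^{|\mu|}}{2}s[\mu,\mu]_{\NR}$, and vanishing of $[\mu,\mu]_{\NR}$ in the graded Nijenhuis--Richardson Lie algebra $\frakc(\frakg,\frakg)$ is, by the standard fact recalled just before Proposition~\ref{prop:graded Linfinity for absolute}, exactly the assertion that $\{\mu_i\}$ defines an $L_\infty[1]$-structure on $\frakl$, i.e. equation~\eqref{L-infty[1]} for all $n\ge1$; here one must track the usual shuffle-sign bookkeeping so that $\tfrac12[\mu,\mu]_{\NR}=0$ becomes the generalised Jacobi identity componentwise (this is standard and I would cite it rather than reprove it). Second I would isolate the $\fraka$-component. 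The terms contributing are: from $l_2$, the term $l_2(s\mu, D)=[\mu,D]_{\NR}$ evaluated in $\fraka$ (which produces $\mu\bar\circ D-(\pm)D\bar\circ\mu$, i.e.\ the $p=2$ term together with the $-D_{n-j+1}(\mu_j(\cdots),\cdots)$ term of~\eqref{homotopy diff operator}); from $l_i$, $i\ge3$, the terms $\frac{1}{(i-1)!}l_i(s\mu,\underbrace{D,\dots,D}_{i-1})$, each of which — by the explicit formula in Proposition~\ref{prop:graded Linfinity for absolute}, or equivalently by the iterated-composition Lemma~\ref{key formula} in its graded form — produces $\lambda^{i-2}$ times a sum over shuffles of $\mu_{\,\cdot\,}$ applied to $(i-1)$ outputs of various $D_{m_j}$'s with the remaining arguments untouched. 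Summing over $i$ and over the possible arities $m_1,\dots,m_{i-1}$ (with $t=m_1+\cdots+m_{i-1}$, $i-1=p-1$) reproduces the first (double) sum of~\eqref{homotopy diff operator}. Third, I would reconcile combinatorial factors: the $\frac{1}{(i-1)!}$ coming from the Maurer--Cartan expansion together with graded symmetry of the $D$'s in the slots collapses the sum over all $(m_{p-1},\dots,m_1,n-t)$-shuffles to a sum over those $\sigma$ whose restriction lies in $\mathrm{PSh}(m_{p-1},\dots,m_1)$, which is precisely the index set appearing in~\eqref{homotopy diff operator}; this is the place where the definitions of $\Sh$ and $\mathrm{PSh}$ from Subsection~\ref{Subsect: notations} get used, and where the exponent $\sum_j(|\xi_1|+\cdots+|\xi_{j-1}|)|\xi_j|$ in the $l_i$ formula turns into the single Koszul sign $\varepsilon(\sigma)$.

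\textbf{Main obstacle.} The genuinely delicate point is the sign and coefficient reconciliation in the third step: showing that $\sum_{i\ge3}\frac{1}{(i-1)!}l_i(s\mu,D,\dots,D)$, a sum over \emph{ordered} tuples of possibly-equal arities with all shuffles allowed and with the product of binomial-type overcounting factors, equals the single sum in~\eqref{homotopy diff operator} indexed by $\mathrm{PSh}$-restricted shuffles with a single Koszul sign $\varepsilon(\sigma)$ and coefficient $\lambda^{p-2}$. Concretely one must check that for a fixed partition $t=m_1+\cdots+m_{p-1}$ the number of ordered refinements times the factor $\frac{1}{(p-1)!}$, weighted by the signs from graded symmetry of $D$, reduces each equivalence class of shuffles to a single representative in $\mathrm{PSh}$; the weight-$\lambda$ bookkeeping ($\lambda^{i-2}=\lambda^{p-2}$) is carried along by Proposition~\ref{Prop:  lambda twisted infinity algebra} and causes no extra trouble. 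Everything else — the identification of the $s\frakm$-part with~\eqref{L-infty[1]}, and the extraction of the $p=2$ and $D\circ\mu$ terms — is a routine expansion that I would summarise rather than belabour, concluding that $(s\mu,D)\in\mathcal{MC}(s\frakm\oplus\fraka)$ if and only if~\eqref{L-infty[1]} and~\eqref{homotopy diff operator} hold for all $n\ge1$.
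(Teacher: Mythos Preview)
Your proposal is correct and follows essentially the same approach as the paper: invoke Proposition~\ref{Prop: sM+a is L_inf alg} to split the Maurer--Cartan equation into $[\mu,\mu]_{\NR}=0$ (giving~\eqref{L-infty[1]}) and $\sum_{p\ge2}\frac{1}{(p-1)!}l_p(s\mu,D,\dots,D)=0$, then expand the latter using the explicit formulas of Proposition~\ref{prop:graded Linfinity for absolute}. You are in fact more careful than the paper in flagging the reduction from $\frac{1}{(p-1)!}$ times a sum over all shuffles to a sum over $\mathrm{PSh}$-restricted shuffles as the nontrivial combinatorial step; the paper's proof simply writes out the expansion and asserts equivalence with~\eqref{homotopy diff operator} without spelling this reduction out.
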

\begin{proof}
	Let  $\mu=\sum\limits_{i=1}^\infty\mu_i:\bar S(\frakl) \lon\frakl$, with   $\mu_i: S^i(\frakl)\lon\frakl$ and $D=\sum\limits_{i=1}^\infty D_i:\bar S(\frakl)\lon\frakl$, with   $D_i:  S^i(\frakl)\lon\frakl$.
	Then by Proposition \ref{Prop: sM+a is L_inf alg}, $(s\mu,D) \in\mathcal{MC}(s \mathfrak{M}\oplus\fraka)$ is equivalent to
	 $$[\mu,\mu]_{\NR}=0\  \mathrm{and}\  \sum\limits_{p=2}^\infty\frac{1}{(p-1)!}l_p(s\mu,\underbrace{D,\dots,D}_{(p-1)\ \mathrm{times}})=0.$$
	It is well known that, see for instance,    \cite[Theorem 4.2]{CC22}, that $[\mu,\mu]_{\NR}=0$ if and only if Eq.~\eqref{L-infty[1]} holds if and only if  $\frakg$ is an $L_\infty[1]$-algebra.	

	On the other hand, we have
	\begin{equation*}
		\begin{aligned}
			0=&	\sum_{p=2}^\infty\frac{1}{(p-1)!}l_p(s\mu,\underbrace{D,\dots,D}_{(p-1)\ \mathrm{times}})\\			=&\sum_{p=2}^\infty\frac{1}{(p-1)!}l_p(s\sum_{m=1}^{\infty}\mu_m,\sum_{m_1=1}^{\infty}D_{m_1},\dots,\sum_{m_{p-1}=1}^{\infty}D_{m_{p-1}})\\
			=& \sum_{p=2}^\infty\frac{1}{(p-1)!}\sum_{m, m_1,\dots, m_{p-1}=1}^\infty
			l_p(s\mu_m,D_{m_1},\dots,D_{m_{p-1}})\\
			=& \sum_{m=1}^\infty\sum_{j=1}^\infty[\mu_m,D_j]_\NR+\sum_{p=3}^\infty\frac{1}{(p-1)!}\sum_{m, m_1,\dots, m_{p-1}=1}^\infty
			\sum_{\tau\in\Sh(m_{p-1},\dots,m_1,m+1-p)}(-1)^{\sum\limits_{j=1}^{p-1}(|D_{m_1}|+\dots+| D_{m_{j-1}}|)|D_{m_j}|}
			\\	& \quad \quad \lambda^{p-2}\mu_m(D_{m_{p-1}}\otimes\dots\otimes D_{m_1}\otimes\Id^{\otimes m+1-p})\tau^{-1}\\
\end{aligned}
	\end{equation*}
		\begin{equation*}
		\begin{aligned}
			=& \sum_{m=1}^\infty\sum_{j=1}^\infty[\mu_m,D_j]_\NR\\
			&\quad+\sum_{p=3}^\infty\frac{1}{(p-1)!}\sum_{m, m_1,\dots, m_{p-1}=1}^\infty
			\sum_{\tau\in\Sh(m_{p-1},\dots,m_1,m+1-p)}
			\lambda^{p-2}\mu_m(D_{m_{p-1}}\otimes\dots\otimes D_{m_1}\otimes\Id^{\otimes m+1-p})\tau^{-1}, 
			\end{aligned}
	\end{equation*}
which is equivalent to Eq.~\eqref{homotopy diff operator}.
\end{proof}

\begin{exam} Since Eq.~\eqref{L-infty[1]} is the well known  $L_\infty[1]$-structure,
	we only need to focus on homotopy differential operators.

When $n=1$,  Eq.~\eqref{homotopy diff operator} gives
\begin{eqnarray}
	\mu_1 D_1=D_1\mu_1.\notag
\end{eqnarray}
which means that $D_1$ is a cochain map.

When $n=2$,  Eq.~\eqref{homotopy diff operator}   gives
\begin{eqnarray}
	D_1(\mu_2(x,y))-\mu_2(D_1(x),y)-\mu_2(x,D_1(y))-\lambda\mu_2(D_1(x),D_1(y))\notag\\=\mu_1(D_2(x,y))-D_2(\mu_1(x),y)-D_2(x,\mu_1(y)).\notag
\end{eqnarray}
which means that $D_1$ is a differential operator (with respect to the multiplication $\mu_2$), but only up to the homotopy given by  $D_2$.
\end{exam}

\bigskip

\noindent
{{\bf Acknowledgments.}  The first author was supported by National Natural Science Foundation of China (No. 11871071). The   fourth author was supported by the National Natural Science Foundation of China (No.  12071137), by  Key Laboratory of Ministry of Education, by  Shanghai Key Laboratory of PMMP  (No.   22DZ2229014),  and by Fundamental Research Funds for the Central Universities.



\end{document}